\pdfoutput=1
\RequirePackage{ifpdf}
\ifpdf 
\documentclass[pdftex]{sigma}
\else
\documentclass{sigma}
\fi

\numberwithin{equation}{section}

\newtheorem{Theorem}{Theorem}[section]
\newtheorem*{Theorem*}{Theorem}
\newtheorem{Corollary}[Theorem]{Corollary}

\newtheorem{Proposition}[Theorem]{Proposition}
 { \theoremstyle{definition}
\newtheorem{Definition}[Theorem]{Definition}

\newtheorem{Remark}[Theorem]{Remark}}

\newcommand{\Zint}{\mathbb {Z}}

\newcommand{\Rea}{\mathbb {R}} 
\newcommand{\Cplx}{\mathbb {C}} 

\begin{document}


\newcommand{\arXivNumber}{2209.02227}

\renewcommand{\PaperNumber}{037}

\FirstPageHeading

\ShortArticleName{On $q$-Middle Convolution and $q$-Hypergeometric Equations}

\ArticleName{On $\boldsymbol{q}$-Middle Convolution\\ and $\boldsymbol{q}$-Hypergeometric Equations}

\Author{Yumi ARAI and Kouichi TAKEMURA}

\AuthorNameForHeading{Y.~Arai and K.~Takemura}

\Address{Department of Mathematics, Ochanomizu University,\\ 2-1-1 Otsuka, Bunkyo-ku, Tokyo 112-8610, Japan}
\Email{\href{mailto:araiyumi.math@gmail.com}{araiyumi.math@gmail.com}, \href{mailto:takemura.kouichi@ocha.ac.jp}{takemura.kouichi@ocha.ac.jp}}

\ArticleDates{Received October 14, 2022, in final form May 19, 2023; Published online June 05, 2023}

\Abstract{The $q$-middle convolution was introduced by Sakai and Yamaguchi. In this paper, we reformulate $q$-integral transformations associated with the $q$-middle convolution.
In particular, we discuss convergence of the $q$-integral transformations. As an application, we obtain $q$-integral representations of solutions to the variants of the $q$-hypergeometric equation by applying the $q$-middle convolution.}

\Keywords{hypergeometric function; $q$-hypergeometric equation; middle convolution; $q$-in\-te\-gral}

\Classification{33D15; 39A13}

\section{Introduction}

The Gauss hypergeometric series is defined by
\begin{align}
&{}_2 F_1 (\alpha,\beta ;\gamma ;z)=1+ \frac{\alpha \beta}{ \gamma} z + \frac{\alpha (\alpha +1) \beta (\beta +1 )}{2! \gamma (\gamma +1)} z^2 + \dots +\frac{(\alpha )_n (\beta )_n}{n!(\gamma )_n} z^n + \cdots,
\label{eq:GaussHGs}
\end{align}
where $(\lambda )_n= \lambda (\lambda +1) \cdots (\lambda +n-1) $, and it frequently appears in mathematics and physics.
The Gauss hypergeometric series satisfies the hypergeometric differential equation
\begin{align}
& z(1-z) \frac{{\rm d}^2y}{{\rm d}z^2} + ( \gamma - (\alpha + \beta +1)z ) \frac{{\rm d}y}{{\rm d}z} -\alpha \beta y=0,
\label{eq:GaussHGE}
\end{align}
which is a standard form of second order Fuchsian differential equation with three regular singularities $\{ 0,1,\infty \}$.
The global structure of the solutions to the hypergeometric differential equation had been studied very well, and the integral representations of the solutions had been applied for the study.
An integral representation of the solutions to the hypergeometric differential equation is written as
\begin{equation}
y= \int_{C} w^{\alpha -\gamma} (1-w)^{\gamma -\beta -1} (z-w)^{-\alpha} \, {\rm d}w,
\label{eq:GHGintrep0}
\end{equation}
where $C$ is an appropriate integral contour.
It had been explained in several textbook, e.g.,~see Whittaker and Watson~\cite{WW}, and equation~\eqref{eq:GHGintrep0} is called Euler's integral representation.

In a modern theory, the integral representation of the solutions to the hypergeometric differential equation as equation~\eqref{eq:GHGintrep0} is related to the middle convolution, which was introduced by Katz~\cite{Katz}.
Dettweiler and Reiter~\cite{DR1,DR2} reformulated it for the Fuchsian system of differential equations written as
\begin{equation}
\frac{\rm d}{{\rm d}x} Y(x) =\bigg( \frac{A_1}{x-t_1}+\frac{A_2}{x-t_2}+\dots + \frac{A_r}{x-t_r} \bigg) Y(x), \label{eq:original}
\end{equation}
where $Y(x)$ is a column vector with $n$ entries and $A_1, A_2, \dots,A_r$ are constant matrices of size $n\times n$.
Roughly speaking, the middle convolution is a correspondence of the Fuchsian system of differential equations which is induced by the Euler's integral transformation.
For example, the function $u= w^{\alpha -\gamma} (1-w)^{\gamma -\beta -1} $ appears partially in equation~\eqref{eq:GHGintrep0}, and it satisfies the scalar linear differential equation ${\rm d}u/{\rm d}w= \{(\alpha -\gamma )/w + (\gamma -\beta -1 )/(w-1)\} u$.
By applying the middle convolution to this scalar equation, we obtain a differential equation of the form in equation~\eqref{eq:original} with $r=2$ and $n=2 $, and each element of $Y$ satisfies the hypergeometric differential equation~\eqref{eq:GaussHGE} with some suitable parameters.
The integral representation in equation~\eqref{eq:GHGintrep0} can be obtained as the correspondence of the solutions with respect to the middle convolution.
It is known that the hypergeometric differential equation is an example of the rigid Fuchsian system of differential equations, and the theory of the middle convolution is related with integral representations of solutions to the rigid Fuchsian system of differential equations (see~\cite{Har} for details).

The basic hypergeometric series
\begin{align}
&_2 \phi_1 \biggl( {a,b\atop{c}} ;q, z \biggr) = \sum_{n=0}^{\infty} \frac{(a ;q)_n (b ;q)_n}{(q;q)_n ( c ;q)_n} z^n \label{eq:2phi1}
\end{align}
was introduced by Heine in 1846.
Here, $(\lambda ; q)_n $ is the $q$-Pochhammer symbol defined by ${(\lambda ; q)_0 \!=\!1}$ and
\begin{equation*}
(\lambda ; q)_n= (1- \lambda )(1- \lambda q) \big(1- \lambda q^2\big) \cdots \big(1- \lambda q^{n-1}\big)
\end{equation*}
for the positive integer $n$.
In this paper, we assume that $q$ is a complex number such that $0 <|q|<1$.
The basic hypergeometric series is a $q$-analogue of the hypergeometric series, which is confirmed by setting
\[
a= q^{\alpha},\qquad b= q^{\beta}\qquad \text{and}\qquad c= q^{\gamma}.
\]
In fact, every term $z^n (q^{\alpha};q)_n \big(q^{\beta};q\big)_n / ((q;q)_n (q^{\gamma} ;q)_n)$ in the basic hypergeometric series converges to the term $z^n (\alpha)_n (\beta)_n / (n! (\gamma)_n)$ in the hypergeometric series (equation~\eqref{eq:GaussHGs}) as $q \to 1$.
The basic hypergeometric series satisfies the $q$-difference hypergeometric equation
\begin{equation}
(x-q) f(x/q) - ((a+b)x -q- c )f(x)+ (ab x- c )f(q x)=0.
\label{eq:qhyp}
\end{equation}
It also tends to the hypergeometric equation~\eqref{eq:GaussHGE} as $q \to 1$.
The Jackson integral representation of the basic hypergeometric series~\eqref{eq:2phi1} is known (cf.~\cite{Mim}), and we obtain an integral representation of the Gauss hypergeometric series~\eqref{eq:GaussHGs} as $q \to 1$.
In~\cite{HMST}, the variant of the $q$-hypergeometric equation of degree two was introduced as
\begin{align}
& \big(x-q^{h_1 +1/2} t_1\big) \big(x - q^{h_2 +1/2} t_2\big) g(x/q) + q^{k_1 +k_2} \big(x - q^{l_1-1/2}t_1 \big) \big(x - q^{l_2 -1/2} t_2\big) g(q x) \nonumber
\\
& \qquad{}-\big[ \big(q^{k_1} +q^{k_2} \big) x^2 +E x + p \big( q^{1/2}+ q^{-1/2}\big) t_1 t_2 \big] g(x) =0, \label{eq:varqhgdeg2intro}
\\[1mm]
& p= q^{(h_1 +h_2 + l_1 + l_2 +k_1 +k_2 )/2}, \qquad E= -p \big\{ \big(q^{- h_2}+q^{-l_2}\big)t_1 + \big(q^{- h_1}+ q^{- l_1}\big) t_2 \big\},\nonumber
\end{align}
where $0 \neq t_1 \neq t_2 \neq 0$, which is a $q$-analogue of the second order Fuchsian differential equation with three singularities $\{ t_1, t_2, \infty \}$.
Although the second order Fuchsian differential equation with three singularities $\{ t_1, t_2, \infty \}$ is transformed to the hypergeometric equation by an affine transformation, the variant of the $q$-hypergeometric equation of degree two would not be transformed to the $q$-hypergeometric equation given in equation~\eqref{eq:qhyp} by any simple transformation.
Several solutions of the variant of the $q$-hypergeometric equation of degree two were obtained in~\cite{HMST,MST} and were also obtained by Fujii and Nobukawa in~\cite{Fuj,FN}.
Similarly, the variant of the $q$-hypergeometric equation of degree three was introduced as a $q$-analogue of the second order Fuchsian differential equation with three singularities $\{t_1,t_2,t_3\}$ which do not include the points~$0$ and~$\infty$ (see equation~\eqref{eq:varqhgdeg3}).

A $q$-deformation of the middle convolution was constructed by Sakai and Yamaguchi~\cite{SY}.
The targets of the $q$-middle convolution are the linear systems of the $q$-difference equations written as
\begin{equation}
Y(q x) = B(x)Y(x), \qquad
B(x) = B_{\infty} + \sum^{N}_{i = 1}\frac{B_{i}}{1 - x/b_{i}},
\label{eq:YqxBYx0}
\end{equation}
where $B_{\infty}, B_{1},\dots,B_N $ are the square matrices of the same size and $b_1, b_2, \dots,b_N$ are the non-zero complex numbers which are different from one another.
It was shown in~\cite{SY} that the $q$-hypergeometric equation is written in the form of equation~\eqref{eq:YqxBYx0} for the case $N=1$.
The $q$-convolution of Sakai and Yamaguchi~\cite{SY} is related to the Jackson integral of the form
\begin{equation}
 \int^{\infty}_{0}f(s) \, {\rm d}_{q}s = (1-q)\sum^{\infty}_{n=-\infty}q^{n} f\big(q^n \big).
\label{eq:SYJint}
\end{equation}
See Section~\ref{sec:qmc} for the $q$-convolution and the $q$-middle convolution.

In this paper, we investigate $q$-deformed integral representations of solutions to the $q$-hyper-\\
geometric equation and its variants by using the $q$-middle convolution.
For this purpose, we need to improve the theory of Sakai and Yamaguchi on the $q$-convolution, and it is another important topic in this paper.
As was pointed out in~\cite{STT2}, we can replace the $q$-integral as equation~\eqref{eq:SYJint} in the theory of Sakai and Yamaguchi with the Jackson integral of the form
\begin{equation}
 \int^{\xi \infty}_{0}f(s) \, {\rm d}_{q}s = (1-q)\sum^{\infty}_{n=-\infty}q^{n} \xi f\big(q^n \xi \big), \qquad \xi \in \Cplx \setminus \{0\},
\label{eq:Jint}
\end{equation}
which had been frequently handled by Aomoto (see~\cite{Aom} and references therein).
We need to take care for convergence of the $q$-integral representation by the $q$-middle convolution to obtain rigorous results.
It is also necessary to make a foundation for application to the special functions.

In Section~\ref{sec:qintqhg}, we apply the $q$-convolution discussed in Section~\ref{sec:qmc} to obtain several solutions to the $q$-hypergeometric equation \eqref{eq:qhyp}.
Although these solutions have been known, the method by the $q$-convolution would be new.

In Section~\ref{sec:qintvarqhg}, we obtain $q$-integral representations of solutions to the variants of the $q$-hypergeometric equation by applying the $q$-middle convolution.
We think that these solutions would be new, while Fujii and Nobukawa obtained some results on solutions to the variants of the $q$-hypergeometric equation in~\cite{Fuj,FN}, which would not rely on the $q$-middle convolution.
We~display some of our results on the variants of the $q$-hypergeometric equation of degree two.
We~use the notations
\begin{align*}
& (a ; q)_{\infty} = \prod_{j=0}^{\infty} \big(1-q^j a\big), \qquad
(a_1, a_2, \dots,a_N ; q)_{\infty} = (a_1 ; q)_{\infty} (a_2 ; q)_{\infty} \cdots (a_N ; q)_{\infty}.
\end{align*}
The function $y(x) = x^{\mu} (\alpha_1 x, \alpha_2 x;q)_{\infty} / (\beta_1 x, \beta_2 x;q)_{\infty} $ satisfies the single $q$-difference equation $y(qx)= (B_{\infty} + B_1/(1- \alpha_1 x )+ B_2 /(1- \alpha_2 x)) y(x)$ for some constants $B_{\infty}$, $B_{1} $ and $B_{2}$ (see equation~\eqref{eq:gqxBxgxqJPN2} for details).
By applying the $q$-middle convolution to this single $q$-difference equation, we obtain the $q$-integral representations which we display as follows.
Set $\lambda = (h_1+h_2-l_1-l_2-k_1 +k_2 +1)/2 $.
We show in Section~\ref{sec:qmcvqhgdeg2}, that if $\lambda +k_1 -k_2 >0$, then the function
\begin{align*}
 g(x) & = x^{\lambda -k_2} \frac{\big( q^{\lambda -h_1 +1/2} x /t_1 ; q\big)_{\infty}}{\big( q^{ -h_1 +1/2} x /t_1 ; q\big)_{\infty}} {}_3\phi_2 \biggl(\!\! \begin{array}{c} q^{ -h_1 +1/2} x /t_1, q^{\lambda -h_1 + l_1},q^{\lambda -h_1 + l_2} t_2 /t_1 \\
 q^{\lambda -h_1 +1/2} x /t_1, q^{ 1 -h_1 + h_2} t_2 / t_1 \end{array}\! ;q,q^{\lambda +k_1 -k_2} \biggr)
\end{align*}
satisfies the variant of $q$-hypergeometric equation of degree two (equation~\eqref{eq:varqhgdeg2intro}).
Another solution is given by
\begin{align*}
g(x) = {}&x^{ -k_1} \frac{\big( q^{- \lambda +h_1 +3/2} t_1 / x, q^{ -\lambda + h_2 + 3/2}t_2 / x ; q\big)_{\infty}}{\big( q^{l_1 + 1/2} t_1 / x,q^{l_2 +1 /2} t_2 / x ; q\big)_{\infty}}
\\
& \times \!_3\phi_2 \biggl(\!\! \begin{array}{c} q^{l_1 + 1/2} t_1 / x,q^{l_2 +1 /2} t_2 / x, q^{1-\lambda} \\
 q^{- \lambda +h_1 +3/2} t_1 / x, q^{ -\lambda + h_2 + 3/2}t_2 / x \end{array}\! ;q,q^{\lambda + k_1 -k_2} \biggr).
\end{align*}
These solutions are obtained as some particular forms of the Jackson integral given in equation~\eqref{eq:Jint} with specific substitution for the parameter $\xi$.
By another choice of the parameter~$\xi$, we obtain the function
\begin{align}
 g(x) ={}& (q-1) x ^{-k_2} \frac{\big(q^{\lambda + l_1 + 1/2}t_1 / x, q^{l_1 -l_2 +1} t_1 / t_2, q;q\big)_{\infty}}{\big(q^{ l_1 + 1/2}t_1 / x, q^{\lambda -h_1 + l_1}, q^{\lambda - h_2 + l_1} t_1 /t_2 ;q\big)_{\infty}} \nonumber
\\[1mm]
&\times {}_3\phi_2 \biggl(\!\!\begin{array}{c} q^{ l_1 + 1/2}t_1 / x, q^{\lambda -h_1 + l_1}, q^{\lambda - h_2 + l_1} t_1 / t_2 \\ q^{\lambda + l_1 + 1/2} t_1 / x, q^{l_1 -l_2 +1} t_1 / t_2 \end{array}\! ;q,q \biggr).
\label{eq:anothersol}
\end{align}
However, it does not satisfy the variant of $q$-hypergeometric equation of degree two, because the general theory obtained in Theorem~\ref{thm:qcint} is not applicable.
Nevertheless, we can show that the function $g(x)$ in equation~\eqref{eq:anothersol} satisfies the variant of the $q$-hypergeometric equation of degree two with a non-homogeneous term, which is written as
\begin{align}
\label{eq:varqhgdeg2intrononhom}
\begin{split}
& \big(x-q^{h_1 +1/2} t_1\big) \big(x - q^{h_2 +1/2} t_2\big) g(x/q) + q^{k_1 +k_2} \big(x - q^{l_1-1/2}t_1 \big) \big(x - q^{l_2 -1/2} t_2\big) g(q x)
\\
& \qquad{}-\big[ \big(q^{k_1} +q^{k_2} \big) x^2 +E x + p \big( q^{1/2}+ q^{-1/2}\big) t_1 t_2 \big] g(x)
\\
& \qquad{}-(1-q)\big(1-q^{\lambda}\big) q^{l_2 + k_1 -k_2 -1/2} t_2 x^{1-k_2} =0,
\\[1mm]
& p= q^{(h_1 +h_2 + l_1 + l_2 +k_1 +k_2 )/2}, \qquad
E= -p \big\{ \big(q^{- h_2}+q^{-l_2}\big)t_1 + \big(q^{- h_1}+ q^{- l_1}\big) t_2 \big\} .
\end{split}
\end{align}
Note that a delicate aspect on convergence of the $q$-middle convolution, which we refine in this paper, is applied.
We also obtain that the function
\begin{align*}
g(x) = {}&(q-1)\frac{q^{-\lambda - l_1 + 1/2}}{t_1} x ^{1-k_2} \frac{\big(q^{-\lambda - l_1 + 3/2} x/ t_1, q^{-\lambda -l_2 +3/2} x / t_2, q ;q\big)_{\infty}}{\big( q^{-h_1 +1/2} x /t_1, q^{ - h_2 + 1/2} x /t_2, q^{1 -\lambda} ;q\big)_{\infty}}
 \\
&\times{}_3\phi_2 \biggl(\!\! \begin{array}{c} q^{ -h_1 +1/2} x /t_1, q^{- h_2 + 1/2} x / t_2, q^{1-\lambda}
\\
 q^{ -\lambda - l_1 + 3/2} x/ t_1, q^{-\lambda -l_2 +3/2} x /t_2 \end{array}\! ;q,q \biggr)
\end{align*}
satisfies equation~\eqref{eq:varqhgdeg2intrononhom}.

This paper is organized as follows.
In Section~\ref{sec:qmc}, we discuss the $q$-middle convolution and the convergence.
In Section~\ref{sec:qintqhg}, we discuss $q$-integral representations of solutions to the $q$-hypergeometric equation and the $q$-Jordan Pochhammer equation.
In Section~\ref{sec:qintvarqhg}, we discuss $q$-integral representations of solutions to the variants of the $q$-hypergeometric equation.

Throughout this paper, we assume that the complex number $q$ satisfies $0 <|q|<1$.

\section[q-middle convolution and convergence]{$\boldsymbol{q}$-middle convolution and convergence} \label{sec:qmc}
We review the definitions of the $q$-convolution and the $q$-middle convolution introduced by Sakai and Yamaguchi~\cite{SY}.
Although these definitions are unchanged in this paper, we modify discussions on the $q$-integrals.

Let ${\bf B}= (B_{\infty};B_{1},\dots,B_N)$ be the tuple of the square matrices of the same size and ${\bf b}= (b_1, b_2, \dots,b_N)$ be the tuple of the non-zero complex numbers which are different one another.
We denote by $E_{{\bf B}, {\bf b}}$ the linear $q$-difference equations
\begin{equation}
Y(q x) = B(x)Y(x), \qquad
B(x) = B_{\infty} + \sum^{N}_{i = 1}\frac{B_{i}}{1 - x/b_{i}} .
\label{eq:YqxBYx}
\end{equation}
\begin{Definition}[$q$-convolution~\cite{SY}] \label{def:qc}
Let ${\bf B}= (B_{\infty}; B_{1},\dots,B_N)$ be the tuple of $m\times m$ matrices and $\lambda \in \Cplx$.
Set $B_0 = I_m - B_\infty - B_{1} - \dots -B_N$.
We define the $q$-convolution $c_\lambda\colon (B_{\infty};B_{1},\dots,B_N)\allowbreak \mapsto (F_{\infty};F_{1},\dots,F_N)$ as follows:
\begin{align}\label{eq:bF}
\begin{split}
 &{\bf F}= ( F_\infty ; F_1, \dots, F_N) \mbox{ \rm is a tuple of $(N+1)m \times (N+1)m$ matrices,} \\
& F_i= \begin{pmatrix}
 {} & {} & O & {} & {}
 \\
 B_0 & \cdots & B_i - \big(1-q^\lambda\big)I_m & \cdots & B_N
 \\
 {} & {} & O & {} & {}
 \end{pmatrix}
 ((i+1)\text{-st}), \qquad 1\leq i \leq N,
 \\[1mm]
& F_\infty= I_{(N+1)m} - \widehat{F}, \qquad
 \widehat{F} = \begin{pmatrix}
 B_0 & \cdots & B_N \\
 \vdots & \ddots & \vdots \\
 B_0 & \cdots & B_N
 \end{pmatrix}\!.
 \end{split}
\end{align}
\end{Definition}
The $q$-convolution in Definition~\ref{def:qc} induces the correspondence of the linear $q$-difference equations
\begin{align*}
& Y(q x) = B(x)Y(x) \mapsto \widehat{Y}(q x) = F(x)\widehat{Y}(x),
\\
& B(x) = B_{\infty} + \sum^{N}_{i = 1}\frac{B_{i}}{1 - x/b_{i}}, \qquad
F(x) = F_{\infty} + \sum^{N}_{i = 1}\frac{F_{i}}{1 - x/b_{i}},
\end{align*}
and it is related with a $q$-integral transformation, which was established by Sakai and Yamaguchi~\cite{SY}.
We modify the relationship with the $q$-integral transformation.
Let $\xi \in \Cplx \setminus \{ 0 \}$.
The Jackson integral on the open interval $(0,\infty )$ is defined as the infinite sum
\begin{equation*}
 \int^{\xi \infty}_{0}f(s) \, {\rm d}_{q}s = (1-q)\sum^{\infty}_{n=-\infty}q^{n} \xi f\big(q^n \xi \big).
\end{equation*}
Note that the value of the Jackson integral may depend on the value $\xi $.
Set
\begin{align}
& P_{\lambda}(x, s) = \frac{\big(q^{\lambda +1} s/x;q\big)_{\infty}}{(q s/x;q)_{\infty}} . \label{eq:Plaxss/x}
\end{align}
We are going to consider the $q$-integral transformation associated to the $q$-convolution with paying attention to convergence.
Let $Y(x)$ be a solution to $E_{{\bf B}, {\bf b}}$ (see equation~\eqref{eq:YqxBYx}).
Set $b_0 =0$ and
\begin{equation*}
\widehat{Y}_{i}^{[K,L]}(x) = (1-q) \sum_{n=K}^{L} \frac{P_{\lambda}(x, s)}{s-b_{i}} s Y(s) \bigg|_{s= q^n \xi}, \qquad i=0,1,\dots,N,
\end{equation*}
where $K$ and $L$ are integers.
\begin{Proposition} \label{prop:convKL}
Let $Y(x)$ be a solution to $E_{{\bf B}, {\bf b}}$.
For $i=0,1,\dots,N $, we have
\begin{align*}
 \widehat{Y}_{i}^{[K,L]}(qx) ={}& \widehat{Y}_{i}^{[K,L]}(x) - \sum^{N}_{j = 0}B_{j}\widehat{Y}_{j}^{[K,L]}(x)
 \\
 &+ \frac{1}{1 - x/b_{i}} \biggl\{ -\big(1 - q^{\lambda}\big)\widehat{Y}_{i}^{[K,L]}(x)+ \sum^{N}_{j = 0}B_{j}\widehat{Y}_{j}^{[K,L]}(x) \biggr\}
 \\
& + \frac{(1-q) x}{x - b_{i}} \big( P_{\lambda}\big(x, q^{K-1} \xi\big)Y\big(q^{K} \xi\big) - P_{\lambda}\big(x, q^{L} \xi\big)Y\big(q^{L+1} \xi\big) \big) .
\end{align*}
\end{Proposition}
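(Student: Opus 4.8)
The plan is to exploit the fact that each $\widehat{Y}_i^{[K,L]}$ is a \emph{finite} sum, so that every manipulation below is an exact identity between finite sums and no question of convergence arises (convergence is deferred to later in the section). Writing $s = q^n\xi$ throughout, I would first record the basic $q$-shift of the kernel. Applying the functional equation $(a;q)_\infty = (1-a)(qa;q)_\infty$ to the numerator and denominator of $P_\lambda$ gives
\begin{equation*}
P_\lambda(qx,s) = P_\lambda(x,s/q) = \frac{x - q^\lambda s}{x - s}\,P_\lambda(x,s).
\end{equation*}
Substituting the first equality into the definition of $\widehat{Y}_i^{[K,L]}(qx)$ turns it into $(1-q)\sum_{n=K}^L \frac{x-q^\lambda s}{x-s}\,\frac{P_\lambda(x,s)}{s-b_i}\,sY(s)$, which is the single sum I will compare against.

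Two auxiliary identities then carry the proof. The first is a telescoping identity (the $q$-analogue of integration by parts) for the ``boundary'' function $H(s) = P_\lambda(x,s)Y(qs)$: the sum $\sum_{n=K}^L\big(H(q^{n-1}\xi) - H(q^n\xi)\big)$ collapses to $H(q^{K-1}\xi) - H(q^L\xi)$, which is exactly the combination in the last line of the claim. Here I would rewrite the summand, using the $q$-difference equation $Y(qs) = B(s)Y(s)$ together with the kernel shift, as $H(s/q) - H(s) = P_\lambda(x,s)\big(\tfrac{x-q^\lambda s}{x-s}I_m - B(s)\big)Y(s)$. The second is the ``residue'' relation
\begin{equation*}
\sum_{j=0}^N B_j\,\widehat{Y}_j^{[K,L]}(x) = (1-q)\sum_{n=K}^L P_\lambda(x,s)\,(I_m - B(s))\,Y(s),
\end{equation*}
valid because $b_0 = 0$ makes the $j=0$ term contribute $B_0$, and because the partial-fraction rearrangement $B_0 + \sum_{j=1}^N \frac{B_j s}{s-b_j} = I_m - B(s)$ holds, using $B_0 = I_m - B_\infty - \sum_{j=1}^N B_j$ and $B(s) = B_\infty - \sum_{j=1}^N \frac{B_j b_j}{s - b_j}$.

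With these in hand, I would rewrite every term on the right-hand side of the claimed identity as a sum $(1-q)\sum_{n=K}^L P_\lambda(x,s)[\,\cdot\,]Y(s)$ over the common range $n=K,\dots,L$, invoking the residue relation for the two occurrences of $\sum_j B_j\widehat{Y}_j$ and the telescoping identity for the boundary term. The problem then reduces to a single identity in the summation variable $s$: the coefficient of $B(s)Y(s)$ must vanish, and the coefficient of $Y(s)$ must equal $\frac{x-q^\lambda s}{x-s}\cdot\frac{s}{s-b_i}$. After substituting $\frac{1}{1-x/b_i} = \frac{-b_i}{x-b_i}$, the matrix part cancels immediately through $1 + \frac{b_i}{x-b_i} - \frac{x}{x-b_i} = 0$, while the scalar part is a rational-function identity in $s$ that I would verify by clearing the common denominator $(x-b_i)(s-b_i)(x-s)$ and matching coefficients of the resulting polynomials in $s$.

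The genuinely routine parts are the two functional-equation computations and the final rational identity; the main bookkeeping obstacle is keeping the scalar ($I_m$) and matrix ($B(s)$) contributions separated correctly across the four groups of terms, since $\sum_j B_j\widehat{Y}_j$ appears twice with different scalar prefactors and the boundary term itself splits into an $I_m$-part and a $B(s)$-part. Organizing the algebra around the two-way split ``coefficient of $Y(s)$'' versus ``coefficient of $B(s)Y(s)$'' is what makes the verification transparent.
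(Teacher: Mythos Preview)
Your proposal is correct and rests on the same three ingredients the paper uses: the kernel shift $P_\lambda(qx,s)=P_\lambda(x,s/q)=\frac{x-q^\lambda s}{x-s}P_\lambda(x,s)$, summation by parts with $H(s)=P_\lambda(x,s)Y(qs)$, and the difference equation for $Y$ rewritten as $Y(qs)-Y(s)=-\sum_{j=0}^N\frac{s}{s-b_j}B_jY(s)$ (your ``residue relation'' is exactly this identity multiplied through by $P_\lambda(x,s)$ and summed). The only organizational difference is that the paper \emph{derives} the right-hand side from the left, by first writing the partial-fraction identity
\[
\frac{P_\lambda(qx,s)}{s-b_i}=\frac{x-q^\lambda b_i}{x-b_i}\,\frac{P_\lambda(x,s)}{s-b_i}+\frac{x}{x-b_i}\,\frac{P_\lambda(x,s/q)-P_\lambda(x,s)}{s},
\]
which cleanly separates the term producing $\widehat{Y}_i^{[K,L]}(x)$ from the telescoping piece; the Abel summation and the difference equation then deliver the boundary term and the $\sum_j B_j\widehat{Y}_j^{[K,L]}$ term directly, with no residual rational identity to check. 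Your route instead expresses both sides as $(1-q)\sum_n P_\lambda(x,s)[\cdot]Y(s)$ and matches the brackets, which costs you the final clearing-of-denominators computation. Both arguments are short; the paper's partial-fraction trick just front-loads the algebra so that no scalar/matrix bookkeeping is needed at the end.
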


\begin{proof}\allowdisplaybreaks
It follows from the definition of $P_{\lambda}(x, s)$ that
\begin{equation}
P_{\lambda}(qx, s) = P_{\lambda}(x, s/q) = \frac{x - q^{\lambda}s}{x - s}P_{\lambda}(x, s). \label{eq:Plambda}
\end{equation}
Hence
\begin{equation*}
\frac{P_{\lambda}(qx, s)}{s - b_{i}} =
\frac{x - q^{\lambda}b_{i}}{x - b_{i}}\frac{P_{\lambda}(x, s)}{s - b_{i}} +
\frac{x}{x - b_{i}}\frac{P_{\lambda}(x, s/q) - P_{\lambda}(x, s)}{s} .
\end{equation*}
By multiplying by $s Y(s)$ and taking summation, we have
\begin{gather*}
\widehat{Y}_{i}^{[K,L]}(qx) =
\biggl\{ 1 + \frac{\big(1 - q^{\lambda}\big)b_{i}}{x - b_{i}} \biggr\} \widehat{Y}_{i}^{[K,L]}(x) \\
\hphantom{\widehat{Y}_{i}^{[K,L]}(qx) =}{}
+ \frac{(1-q) x}{x - b_{i}}\sum_{n=K}^{L} (P_{\lambda}(x, s/q) - P_{\lambda}(x, s)) Y(s)|_{s= q^n \xi}.
\end{gather*}
Since the function $Y(x)$ satisfies $E_{{\bf B}, b}$, it follows that
\begin{align*}
& Y(q s) -Y(s) = \biggl( B_{\infty} + \sum^{N}_{j = 1}\frac{B_{j}}{1 - s/b_{j}} - I_{m} \biggr)Y(s) = \sum^{N}_{j = 0} \frac{-s}{s - b_{j}}B_{j} Y(s) .
\end{align*}
Then
\begin{align*}
& \sum_{n=K}^{L} (P_{\lambda}(x, s/q) - P_{\lambda}(x, s)) Y(s)|_{s= q^n \xi}
 \\
&\qquad = P_{\lambda}\big(x, q^{K-1} \xi\big)Y\big(q^{K} \xi\big) - P_{\lambda}\big(x, q^{L} \xi\big)Y\big(q^{L+1} \xi\big) + \sum_{n=K}^{L} P_{\lambda}(x, s)( Y(q s) -Y(s))\bigg|_{s= q^n \xi}
 \\
&\qquad = P_{\lambda}\big(x, q^{K-1} \xi\big)Y\big(q^{K} \xi\big) - P_{\lambda}\big(x, q^{L} \xi\big)Y\big(q^{L+1} \xi\big) +\sum_{n=K}^{L} \sum^{N}_{j = 0}\frac{- s P_{\lambda}(x, s)}{s - b_{j}}B_{j} Y(s) \bigg|_{s= q^n \xi}
 \\
&\qquad = P_{\lambda}\big(x, q^{K-1} \xi\big)Y\big(q^{K} \xi\big) - P_{\lambda}\big(x, q^{L} \xi\big)Y\big(q^{L+1} \xi\big) - \frac{1}{1-q}\sum^{N}_{j = 0} B_{j} \widehat{Y}_{j}^{[K,L]}(x) .
\end{align*}
Therefore, we have
\begin{align*}
 \widehat{Y}_{i}^{[K,L]}(qx) ={}& \biggl\{ 1 + \frac{\big(1 - q^{\lambda}\big)b_{i}}{x - b_{i}} \biggr\} \, \widehat{Y}_{i}^{[K,L]}(x) - \frac{x}{x - b_{i}}\sum^{N}_{j = 0} B_{j} \widehat{Y}_{j}^{[K,L]}(x)
\\
& + \frac{(1-q) x}{x - b_{i}} \big( P_{\lambda}\big(x, q^{K-1} \xi\big)Y\big(q^{K} \xi\big) - P_{\lambda}\big(x, q^{L} \xi\big)Y\big(q^{L+1} \xi\big) \big)
\\
={}& \widehat{Y}_{i}^{[K,L]}(x) - \sum^{N}_{j = 0}B_{j}\widehat{Y}_{j}^{[K,L]}(x)
\\
&+ \frac{1}{1 - x/b_{i}}
\biggl\{ -\big(1 - q^{\lambda}\big)\widehat{Y}_{i}^{[K,L]}(x) + \sum^{N}_{j = 0}B_{j}\widehat{Y}_{j}^{[K,L]}(x) \biggr\}
\\
& + \frac{(1-q) x}{x - b_{i}} \big(P_{\lambda}\big(x, q^{K-1} \xi\big)Y\big(q^{K} \xi\big) - P_{\lambda}\big(x, q^{L} \xi\big)Y\big(q^{L+1} \xi\big)\big).\tag*{\qed}
\end{align*}
\renewcommand{\qed}{}
\end{proof}

\begin{Corollary}[{cf.~\cite[Theorem~2.1]{SY}}] \label{cor:qcint}
Let $Y(x)$ be a solution of $E_{{\bf B}, {\bf b}}$.
Set $b_0 =0$.
Assume that every component of $\widehat{Y}_{i}^{[K,L]}(x)$ converges as $K \to -\infty$ and $L \to +\infty $ for $i=0,1, \dots,N$ and
\begin{equation*}
\lim_{K \to -\infty} P_{\lambda}\big(x, q^{K-1} \xi\big)Y\big(q^{K} \xi\big) =0, \qquad
\lim_{L \to +\infty} P_{\lambda}\big(x, q^{L} \xi\big)Y\big(q^{L+1} \xi\big) =0 .
\end{equation*}
Define the function $\widehat{Y}(x) $ as
\begin{align*}
& \widehat{Y}_{i}(x) = \int^{\xi \infty}_{0}\frac{P_{\lambda}(x, s)}{s-b_{i}}Y(s) \, {\rm d}_{q}s, \quad i=0,\dots,N, \qquad
\widehat{Y}(x) = \begin{pmatrix} \widehat{Y}_{0}(x) \\ \vdots \\ \widehat{Y}_{N}(x) \end{pmatrix}\! . 
\end{align*}
Then the function $\widehat{Y}(x)$ satisfies the equation $E_{{\bf F}, {\bf b}}$, i.e.,
\begin{equation}
\widehat{Y}(q x) = \biggl( F_{\infty} + \sum^{N}_{i = 1}\frac{F_{i}}{1 - x/b_{i}} \biggr) \widehat{Y}(x). \label{eq:hYqx}
\end{equation}
\end{Corollary}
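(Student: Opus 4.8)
The plan is to deduce the statement by letting $K \to -\infty$ and $L \to +\infty$ in Proposition~\ref{prop:convKL}, and then recognizing the resulting family of $N+1$ block relations as the single matrix equation~\eqref{eq:hYqx}. The two standing hypotheses are tailored precisely so that this limit may be taken termwise.

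First I would observe that, by definition of the Jackson integral, $\widehat{Y}_{i}^{[K,L]}(x)$ is exactly the partial sum (from $n=K$ to $n=L$) of the doubly infinite series defining $\widehat{Y}_i(x)$, since $s\,P_{\lambda}(x,s)/(s-b_i)$ evaluated at $s=q^n\xi$ reproduces the summand $q^n\xi\,P_{\lambda}(x,q^n\xi)Y(q^n\xi)/(q^n\xi-b_i)$. Hence the convergence hypothesis yields $\widehat{Y}_{i}^{[K,L]}(x)\to\widehat{Y}_i(x)$, and the same applied at the argument $qx$ gives $\widehat{Y}_{i}^{[K,L]}(qx)\to\widehat{Y}_i(qx)$. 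The prefactor $(1-q)x/(x-b_i)$ of the boundary contribution is a fixed finite quantity, so the second hypothesis forces the boundary terms to vanish in the limit. Passing to the limit in Proposition~\ref{prop:convKL} therefore gives, for each $i=0,1,\dots,N$,
\[
\widehat{Y}_{i}(qx) = \widehat{Y}_{i}(x) - \sum_{j=0}^{N} B_j \widehat{Y}_j(x) + \frac{1}{1-x/b_i}\biggl( -\bigl(1-q^{\lambda}\bigr)\widehat{Y}_i(x) + \sum_{j=0}^{N} B_j \widehat{Y}_j(x)\biggr).
\]

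Next I would verify that this collection of block relations is precisely~\eqref{eq:hYqx} by unwinding Definition~\ref{def:qc}. The $i$-th block row of $F_\infty = I_{(N+1)m}-\widehat{F}$ applied to $\widehat{Y}(x)$ contributes $\widehat{Y}_i(x) - \sum_{j=0}^{N} B_j\widehat{Y}_j(x)$, since every block row of $\widehat F$ equals $(B_0,\dots,B_N)$. Among $F_1,\dots,F_N$, only $F_i$ has a nonzero block row (its $(i+1)$-st), so for $i\ge 1$ the sum $\sum_{k=1}^{N}F_k/(1-x/b_k)$ contributes $\frac{1}{1-x/b_i}\bigl(\sum_{j=0}^{N}B_j\widehat{Y}_j(x) - (1-q^{\lambda})\widehat{Y}_i(x)\bigr)$ to the $i$-th block and nothing else. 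This agrees term-for-term with the limiting relation above.

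The only point requiring genuine care is the index $i=0$, where $b_0=0$ and no matrix $F_0$ appears in $F(x)$, so that the $0$-th block row of $F(x)$ comes entirely from $F_\infty$. Reading $1/(1-x/b_i)$ as $b_i/(b_i-x)$, the factor at $i=0$ equals $0$, whence the limiting relation collapses to $\widehat{Y}_0(qx)=\widehat{Y}_0(x)-\sum_{j=0}^{N}B_j\widehat{Y}_j(x)$, matching the $0$-th block row of $F_\infty\widehat{Y}(x)$ exactly. With all $N+1$ block rows in agreement, $\widehat{Y}(x)$ satisfies $E_{{\bf F},{\bf b}}$. I expect the substantive work to be confined to this $b_0=0$ bookkeeping and to the justification of the termwise limit, both of which are underwritten by the convergence and boundary-decay hypotheses; the remainder is a direct transcription of the limiting system into matrix form.
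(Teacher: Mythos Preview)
Your proposal is correct and follows essentially the same approach as the paper's own proof: pass to the limit in Proposition~\ref{prop:convKL} using the two hypotheses, and then observe that the resulting system of relations is precisely the matrix equation~\eqref{eq:hYqx} via Definition~\ref{def:qc}. The paper's proof is terser, simply asserting that the limiting relations are equivalent to~\eqref{eq:hYqx} by the definition of ${\bf F}$, whereas you spell out the block-row verification and the $b_0=0$ interpretation explicitly; this is added clarity, not a different argument.
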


\begin{proof}
It follows from the assumption and Proposition~\ref{prop:convKL} that
\begin{equation*}
\widehat{Y}_{i}(x) = \lim_{K \to -\infty \atop{L \to +\infty}} \widehat{Y}_{i}^{[K,L]}(x) \qquad
\biggl( = \int^{\xi \infty}_{0} \frac{P_{\lambda}(x, s)}{s-b_{i}}Y(s) \, {\rm d}_{q}s \biggr)
\end{equation*}
and
\begin{align}
& \widehat{Y}_{i}(qx) = \widehat{Y}_{i}(x) - \sum^{N}_{j = 0}B_{j}\widehat{Y}_{j}(x) + \frac{1}{1 - x/b_{i}}
\biggl\{ -\big(1 - q^{\lambda}\big)\widehat{Y}_{i}(x) + \sum^{N}_{j = 0}B_{j}\widehat{Y}_{j}(x) \biggr\} \label{eq:hYiqx}
\end{align}
for $i=0, \dots,N$.
Since equation~\eqref{eq:hYiqx} is equivalent to equation~\eqref{eq:hYqx} by equation~\eqref{eq:bF}, the function $\widehat{Y}(x)$ satisfies the equation $E_{{\bf F}, b}$.
\end{proof}

\begin{Remark}
The function $(x/s)^{\lambda} (x/s;q)_{\infty} / \big(q^{-\lambda} x/s ;q\big)_{\infty}$ also satisfies equation~\eqref{eq:Plambda}.
Proposition~\ref{prop:convKL} and Corollary~\ref{cor:qcint} also hold true if we replace the function $P_{\lambda}(x, s)$ in equation~\eqref{eq:Plaxss/x} with
\begin{align*}
& P_{\lambda}(x, s) = (x/s)^{\lambda} \frac{(x/s;q)_{\infty}}{(q^{-\lambda} x/s ;q)_{\infty}}. 
\end{align*}
\end{Remark}

We give a sufficient condition that the assumption of Corollary~\ref{cor:qcint} holds.
\begin{Proposition} \label{prop:qcintconv}
Let $[Y(s )]_k $ be the $k$-th component of $Y(s ) (\in \Cplx ^{m} )$.
We assume that the following conditions $(a)$ and $(b)$ hold:
\begin{enumerate}
\item[$(a)$] There exist $\varepsilon_1, C_1 \in \Rea_{>0} $ and $M_1 \in \Zint $ such that $|[Y(s)]_k | \leq C_1 |s |^{\varepsilon_1}$ for any $s \in \{ q^{n}\xi \mid n \geq M_1, \, n \in \Zint \} $ and $k=1,\dots,m$.

\item[$(b)$] There exist $\varepsilon_2, C_2 \in \Rea_{>0} $ and $M_2 \in \Zint $ such that $|[Y(s )]_k | \leq C_2 |s | ^{\lambda - \varepsilon_2} $ for any $s \in \{ q^{n}\xi \mid n \leq M_2, \, n \in \Zint \} $ and $k=1,\dots,m$.
\end{enumerate}
Then every component of $\widehat{Y}_{j}^{[K,L]}(x) $ converges absolutely as $K \to -\infty $ and $L \to +\infty $ for $j=0,1,\dots,N$ and
\begin{equation*}
\lim_{K \to -\infty} P_{\lambda}\big(x, q^{K-1} \xi\big)Y\big(q^{K} \xi\big) =0, \qquad
\lim_{L \to +\infty} P_{\lambda}\big(x, q^{L} \xi\big)Y\big(q^{L+1} \xi\big) =0 .
\end{equation*}
\end{Proposition}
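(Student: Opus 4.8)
The plan is to reduce the statement to the asymptotics of the kernel $P_{\lambda}(x,s)$ along the $q$-lattice $s=q^{n}\xi$ at the two ends $n\to+\infty$ and $n\to-\infty$, and then to read off geometric decay of the summands from hypotheses (a) and (b). At the end $n\to+\infty$ one has $s=q^{n}\xi\to0$, and since $(a;q)_{\infty}\to1$ as $a\to0$, both factors of $P_{\lambda}(x,s)=\big(q^{\lambda+1}s/x;q\big)_{\infty}/(qs/x;q)_{\infty}$ tend to $1$; hence there is a constant $C$ with $|P_{\lambda}(x,q^{n}\xi)|\le C$ for all sufficiently large $n$ (for fixed $x$ avoiding the poles of the kernel). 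This end is therefore harmless.

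The essential input is the behaviour at $n\to-\infty$, where $|s|\to\infty$, and I would extract it from the functional equation \eqref{eq:Plambda} rather than from any external asymptotic of $q$-Pochhammer symbols. Setting $p_{n}=P_{\lambda}(x,q^{n}\xi)$, equation \eqref{eq:Plambda} gives the exact recursion $p_{n-1}=r_{n}\,p_{n}$ with $r_{n}=\big(x-q^{\lambda+n}\xi\big)/(x-q^{n}\xi)$, so that $p_{-j}=p_{0}\prod_{m=-j+1}^{0}r_{m}$. The decisive simplification is the identity $r_{m}-q^{\lambda}=x\big(1-q^{\lambda}\big)/(x-q^{m}\xi)$, which shows that $|r_{m}-q^{\lambda}|$ is $O\big(|q|^{-m}\big)$ and tends to $0$ geometrically as $m\to-\infty$. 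Consequently the infinite product $\prod_{m\le0}\big(r_{m}/q^{\lambda}\big)$ converges absolutely to a finite nonzero value, and $p_{-j}=q^{\lambda j}\,p_{0}\prod_{m=-j+1}^{0}\big(r_{m}/q^{\lambda}\big)$ is bounded, above and below, by constant multiples of $\big|q^{\lambda}\big|^{\,j}=\big|q^{\lambda}\big|^{-n}$. This is the step I expect to be the main obstacle; once the telescoping identity and this simplification are in hand, the clean power bound $|P_{\lambda}(x,q^{n}\xi)|\le \mathrm{const}\cdot\big|q^{\lambda}\big|^{-n}$ follows, and everything afterwards is bookkeeping.

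With these two asymptotics the remaining steps are routine geometric-series estimates. In each summand $\frac{P_{\lambda}(x,s)}{s-b_{j}}s\,Y(s)\big|_{s=q^{n}\xi}$ the factor $s/(s-b_{j})$ is bounded on both tails (it tends to $0$ or $1$ as $n\to+\infty$ according as $b_{j}\neq0$ or $b_{0}=0$, and to $1$ as $n\to-\infty$). For $n\to+\infty$, boundedness of $P_{\lambda}$ together with (a) bounds the summand by $\mathrm{const}\cdot|q|^{n\varepsilon_{1}}$, a convergent geometric series since $|q|<1$ and $\varepsilon_{1}>0$; for $n\to-\infty$, the bound on $P_{\lambda}$ together with (b) makes the $\lambda$-dependence cancel and leaves $\mathrm{const}\cdot|q|^{-n\varepsilon_{2}}$, again a convergent geometric series as $n\to-\infty$. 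Summing, every component of $\widehat{Y}_{j}^{[K,L]}(x)$ converges absolutely. Finally, the two boundary limits required in Corollary~\ref{cor:qcint} are the same expressions evaluated at single endpoints: $P_{\lambda}\big(x,q^{L}\xi\big)Y\big(q^{L+1}\xi\big)\to0$ because $P_{\lambda}$ is bounded while (a) forces $Y\big(q^{L+1}\xi\big)\to0$, and $P_{\lambda}\big(x,q^{K-1}\xi\big)Y\big(q^{K}\xi\big)\to0$ by exactly the estimate used for the $n\to-\infty$ tail, whose general term already tends to $0$.
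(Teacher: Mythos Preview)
Your proof is correct and follows essentially the same approach as the paper: both use $P_\lambda(x,s)\to 1$ as $s\to 0$ for the $L\to+\infty$ tail and the functional equation \eqref{eq:Plambda} to control the growth of $P_\lambda(x,q^n\xi)$ as $n\to-\infty$, then reduce to geometric series via hypotheses (a) and (b). The only cosmetic difference is that you extract the sharp bound $|P_\lambda(x,q^n\xi)|\le C\,|q^\lambda|^{-n}$ via an absolutely convergent infinite product $\prod(r_m/q^\lambda)$, whereas the paper builds an $\varepsilon_2/2$ slack directly into the ratio estimate $|r_n|\le|q|^{\lambda-\varepsilon_2/2}$; either packaging yields the required geometric decay.
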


\begin{proof}
It follows from equation~\eqref{eq:Plaxss/x} that $P_{\lambda}(x, s) \to 1$ as $s \to 0$ for each $x$.
Therefore, there exists the integer $M'_1$ such that
\begin{equation*}
\bigg| \frac{P_{\lambda}(x, s)}{s-b_{j}} s [Y(s)]_k \bigg| \leq 2 C_1 |s |^{\varepsilon_1}, \qquad
s= q^{n}\xi
\end{equation*}
for any integer $ n$ such that $n \geq M'_1 $, $k \in \{1, \dots, m\}$ and $j \in \{0, 1, \dots, N\}$.
Absolute convergence of the summation
\begin{equation*}
\sum_{n=M'_1}^{\infty} \frac{ P_{\lambda}(x, s)}{s-b_{i}} s[Y(s)]_k \bigg|_{s= q^{n}\xi}
\end{equation*}
follows from the convergence of the majorant series $2 C_1 |\xi |^{\varepsilon_1} (|q|^{\varepsilon_1})^n $.
It is also shown that $\lim_{L \to +\infty} P_{\lambda}\big(x, q^{L} \xi\big)Y\big(q^{L+1} \xi\big) =0 $.

Since $P_{\lambda}(x, s/q) = P_{\lambda}(x, s) \big(x - q^{\lambda}s\big)/(x - s)$, we have $|P_{\lambda}(x, s/q) / P_{\lambda}(x, s) | \leq |q|^{\lambda - \varepsilon_2 /2}$ for sufficiently large $s$ for each $x$.
Then there exists $M'_2 \in \Zint $ such that $\big|P_{\lambda}\big(x, q^{M'_2 + n'} \xi \big) \big| \leq \big(|q|^{\lambda - \varepsilon_2 /2}\big)^{-n'} \big| P_{\lambda}\big(x, q^{M'_2} \xi \big) \big|$ for any non-positive integer $n' $, and it follows that $|P_{\lambda}(x, q^{ n} \xi ) | \leq C' \big(|q|^{\lambda - \varepsilon_2 /2}\big)^{-n}$ for any integer $n$ such that $n \leq M'_2$ by setting $C'= |q|^{M'_2 (\lambda - \varepsilon_2 /2)} \big| P_{\lambda}\big(x, q^{M'_2} \xi \big) \big| $.
Hence, there exist $M_3 \in \Zint $ and $C'' \in \Rea_{>0} $ such that
\begin{equation*}
\biggl|\frac{P_{\lambda}(x, s)}{s-b_{j}}s[Y(s)]_k \biggr| \leq C'' \big(|q|^{\lambda - \varepsilon_2 /2}\big)^{- n} |s |^{\lambda - \varepsilon_2}, \qquad
s= q^{n} \xi
\end{equation*}
for any integer $n$ such that $n \leq M_3$, $k \in \{1, \dots, m\}$ and $j \in \{0, 1, \dots, N\}$.
Absolute convergence of the summation
\begin{equation*}
\sum ^{M_3}_{*n=-\infty} \frac{ P_{\lambda}(x, s)}{s-b_{i}}s[Y(s)]_k \bigg|_{s= q^{n}\xi}
\end{equation*}
follows from the convergence of the majorant series of the form $C''' \big(|q|^{ \varepsilon_2 /2}\big)^{-n} $.
It is also shown that $\lim_{K \to -\infty} P_{\lambda}\big(x, q^{K-1} \xi\big)Y(q^{K} \xi) =0 $.
\end{proof}

Therefore, we obtain the following theorem.

\begin{Theorem}[{cf.~\cite[Theorem 2.1]{SY}}]\label{thm:qcint}
Let $Y(x)$ be a solution of $E_{{\bf B}, {\bf b}}$ which satisfies the conditions $(a)$ and $(b)$ in Proposition~$\ref{prop:qcintconv}$.
Then the function $\widehat{Y}(x)$ defined by
\begin{align*}
& \widehat{Y}_{i}(x) = \int^{\xi \infty}_{0}\frac{P_{\lambda}(x, s)}{s-b_{i}}Y(s) \, {\rm d}_{q}s, \quad i=0,\dots,N, \qquad
\widehat{Y}(x) = \begin{pmatrix} \widehat{Y}_{0}(x) \\ \vdots \\ \widehat{Y}_{N}(x) \end{pmatrix} 
\end{align*}
is convergent and it satisfies the equation $E_{c_{\lambda}({\bf B}), {\bf b}}$, where $c_{\lambda}({\bf B})= {\bf F} $ is the tuple defined by the convolution in equation~\eqref{eq:bF}.
\end{Theorem}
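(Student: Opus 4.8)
The plan is to obtain the theorem as a direct consequence of chaining the three preceding results. The key observation is structural: the hypotheses $(a)$ and $(b)$ imposed on $Y(x)$ are exactly the input required by Proposition~\ref{prop:qcintconv}, whose conclusion in turn furnishes precisely the two assumptions under which Corollary~\ref{cor:qcint} becomes applicable. In this sense the theorem is merely the packaging of the convergence analysis (Proposition~\ref{prop:qcintconv}) together with the functional identity for the Jackson integrals (Corollary~\ref{cor:qcint}, which itself rests on the partial-sum identity of Proposition~\ref{prop:convKL}). No new estimate needs to be produced.

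Concretely, I would first apply Proposition~\ref{prop:qcintconv} to the given solution $Y(x)$, which satisfies $(a)$ and $(b)$ by hypothesis. Its conclusion yields two facts simultaneously: (i) for each $j=0,1,\dots,N$, every component of the partial sum $\widehat{Y}_{j}^{[K,L]}(x)$ converges absolutely as $K \to -\infty$ and $L \to +\infty$, so that the Jackson integrals $\widehat{Y}_{i}(x) = \int^{\xi \infty}_{0} \frac{P_{\lambda}(x, s)}{s-b_{i}} Y(s)\, {\rm d}_{q}s$ defining $\widehat{Y}(x)$ exist and are convergent; and (ii) the two boundary limits $\lim_{K \to -\infty} P_{\lambda}(x, q^{K-1}\xi) Y(q^{K}\xi) = 0$ and $\lim_{L \to +\infty} P_{\lambda}(x, q^{L}\xi) Y(q^{L+1}\xi) = 0$ both hold. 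This settles the convergence assertion of the theorem.

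With (i) and (ii) in hand, the assumptions of Corollary~\ref{cor:qcint} are met verbatim, so I would invoke that corollary to conclude that $\widehat{Y}(x)$ satisfies the equation $E_{{\bf F}, {\bf b}}$. It then remains only to identify this equation with the one in the statement: since ${\bf F} = c_{\lambda}({\bf B})$ by the very definition of the $q$-convolution in equation~\eqref{eq:bF}, the equation $E_{{\bf F}, {\bf b}}$ is literally $E_{c_{\lambda}({\bf B}), {\bf b}}$, which finishes the proof. Because the substantive work—the termwise manipulation behind Proposition~\ref{prop:convKL} and the decay estimates exploiting $P_{\lambda}(x,s) \to 1$ as $s \to 0$ together with the ratio bound $|P_{\lambda}(x, s/q)/P_{\lambda}(x, s)| \leq |q|^{\lambda - \varepsilon_2/2}$ for large $s$ in Proposition~\ref{prop:qcintconv}—has already been carried out, I expect no genuine obstacle here. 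The only point requiring a moment of care is confirming that the output of Proposition~\ref{prop:qcintconv} matches the hypotheses of Corollary~\ref{cor:qcint} precisely, and that matching is immediate by construction.
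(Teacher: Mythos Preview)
Your proposal is correct and matches the paper's approach exactly: the paper presents Theorem~\ref{thm:qcint} immediately after Proposition~\ref{prop:qcintconv} with the words ``Therefore, we obtain the following theorem,'' giving no further argument, since the theorem is precisely the concatenation of Proposition~\ref{prop:qcintconv} with Corollary~\ref{cor:qcint} that you describe.
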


Note that Theorem~\ref{thm:qcint} is a one-parameter deformation of~\cite[Theorem 2.1]{SY} by Sakai and Yamaguchi with respect to the parameter $\xi $.

The $q$-middle convolution is defined by considering an appropriate quotient space.
\begin{Definition}[{$q$-middle convolution,~\cite{SY}}]\label{def:qmc}
We define the $\mathbf{F}$-invariant subspaces $\mathcal{K}$ and $\mathcal{L} $ of $(\mathbb{C}^m )^{N+1}$ as follows:
\begin{equation*}
 \mathcal{K} =
 \begin{pmatrix}
 \operatorname{ker}B_0 \\
 \vdots \\
 \operatorname{ker}B_N
 \end{pmatrix}\!, \qquad
 \mathcal{L} =
 \operatorname{ker}\big(\widehat{F} - \big(1 - q^{\lambda}\big)I_{(N+1)m}\big).
\end{equation*}
We denote the action of $F_k$ on the quotient space $(\mathbb{C}^m )^{N+1}/(\mathcal{K} + \mathcal{L})$ by $\overline{F}_k$, $k=\infty,1, \dots,N$.
Then the $q$-middle convolution $mc_\lambda$ is defined by the correspondence $ E_{{\bf B}, {\bf b}}\mapsto E_{\overline{{\bf F}}, {\bf b}} $, where $ \overline{{\bf F}} = \big( \overline{F}_{\infty}; \overline{F}_{1},\dots,\overline{F}_N \big) $.
\end{Definition}
The $q$-middle convolution $mc_\lambda$ would induce the integral transformation of the solutions by applying the integral transformation on the $q$-convolution, although it would be necessary to consider the subspace $\mathcal{K} + \mathcal{L} \subset (\mathbb{C}^m )^{N+1}$.

\section[q-integral representation of solutions to q-hypergeometric equation]{$\boldsymbol{q}$-integral representation of solutions \\ to $\boldsymbol{q}$-hypergeometric equation} \label{sec:qintqhg}

In this section, we obtain the $q$-hypergeometric equation and the $q$-Jordan Pochhammer equation by applying the $q$-convolution.

\subsection[q-convolution and q-hypergeometric equation]{$\boldsymbol{q}$-convolution and $\boldsymbol{q}$-hypergeometric equation}

Set $y(x)=x^{\mu}(\alpha x;q)_{\infty}/(\beta x;q)_{\infty}$.
Then
\begin{align*}
& y(qx) = (qx)^{\mu} \frac{(\alpha qx;q)_{\infty}}{(\beta qx;q)_{\infty}} = q^{\mu}\frac{1-\beta x}{1-\alpha x} y(x) = \bigg( q^{\mu}\frac{\beta}{\alpha}+\frac{q^{\mu}(1-\beta /\alpha)}{1-\alpha x}\bigg) y(x) .
\end{align*}
Namely, the function $y(x)$ satisfies the linear $q$-difference equation $y(qx)=B(x)y(x) $, where
\begin{align}
& B(x)=B_{\infty}+\frac{B_1}{1-x/b_1}, \qquad
B_{\infty}=q^{\mu}\frac{\beta}{\alpha}, \qquad
B_1=q^{\mu}\biggl(1-\frac{\beta}{\alpha}\biggr), \qquad
b_1=\frac{1}{\alpha}.
\label{eq:gqxBxgxhge}
\end{align}
Then $ B_0=1-B_{\infty}-B_1=1-q^{\mu}$.
We apply the convolution $c_{\lambda}$.
Then the matrices $ c_{\lambda} (\textbf{B}) = \textbf{F}=(F_1,F_{\infty})$ are written as
\begin{align*}
&F_1= \begin{pmatrix}
 0 & 0\\ B_0 & B_1-\big(1-q^{\lambda}\big)
 \end{pmatrix}
 = \begin{pmatrix}
 0 & 0\\ 1-q^{\mu} & q^{\mu} (1- \beta /\alpha )-1+q^{\lambda}
 \end{pmatrix}\!,
 \\[1mm]
&F_{\infty}= \begin{pmatrix}
 1-B_0 & -B_1\\ -B_0 & 1-B_1
 \end{pmatrix}
 = \begin{pmatrix}
 q^{\mu} & -q^{\mu} (1- \beta /\alpha ) \\ -(1-q^{\mu}) & 1-q^{\mu} (1- \beta /\alpha )
 \end{pmatrix}\! .
\end{align*}
The equation $E_{\textbf{F},b}$ is written as
\begin{align}
\widehat{Y}(qx)
 &= \bigg(F_{\infty}+\frac{F_1}{1-\alpha x}\bigg) \widehat{Y}(x) = \begin{pmatrix}
 q^{\mu} & -q^{\mu} (1-\beta /\alpha )
 \\[1mm]
 \displaystyle \frac{(1-q^{\mu})\alpha x}{1-\alpha x} & \displaystyle \frac{(-\alpha +(\alpha - \beta)q^{\mu})x+q^{\lambda}}{1-\alpha x}
 \end{pmatrix} \widehat{Y}(x). \label{eq:clahg}
\end{align}
On the system of first order linear $q$-difference equations with two components, each component satisfies the second order linear $q$-difference equation described as follows.
\begin{Proposition} \label{prop:g1g2single}
Assume that the functions $g_1(x)$ and $g_2(x)$ satisfy
\begin{align}\label{eq:g1qxg2qx}
\begin{split}
& g_1(qx)=a_{11}(x)g_1(x)+a_{12}(x)g_2(x) + b_1 (x),
\\[1mm]
& g_2(qx)=a_{21}(x)g_1(x)+a_{22}(x)g_2(x) + b_2 (x).
\end{split}
\end{align}
Then we obtain the following equation
\begin{align*}
 & \big\{ a_{11}(x/q)a_{22}(x/q) - a_{12}(x/q)a_{21}(x/q)\big\} \frac{a_{12}(x)}{a_{12}(x/q)} g_1(x/q) + g_1(qx)
 \\
 & \qquad{} -\bigg\{ a_{11}(x)+\frac{a_{12}(x)}{a_{12}(x/q)}a_{22}(x/q)\bigg\} g_1(x)
 - b_1 (x) + \frac{a_{12}(x)}{a_{12}(x/q)} a_{22}(x/q) b_1 (x/q)
 \\
 & \qquad {} - a_{12}(x) b_2 (x/q) = 0 .
\end{align*}
\end{Proposition}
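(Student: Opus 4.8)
The plan is to eliminate $g_2$ from the system~\eqref{eq:g1qxg2qx} by a shift-and-substitute argument, working under the tacit hypothesis $a_{12}(x/q) \neq 0$ that is needed for the ratio $a_{12}(x)/a_{12}(x/q)$ in the asserted equation to make sense. The three instances of~\eqref{eq:g1qxg2qx} that I would use are the first equation evaluated at $x$, the first equation evaluated at $x/q$, and the second equation evaluated at $x/q$.

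First I would read off from the first equation at $x$ the combination
\[
a_{12}(x) g_2(x) = g_1(qx) - a_{11}(x) g_1(x) - b_1(x),
\]
and from the first equation at $x/q$ the combination
\[
a_{12}(x/q) g_2(x/q) = g_1(x) - a_{11}(x/q) g_1(x/q) - b_1(x/q).
\]
These two identities capture everything about $g_2$ that is needed, each expressed through values of $g_1$ at $x/q$, $x$ and $qx$ only.

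Next I would take the second equation of~\eqref{eq:g1qxg2qx} at $x/q$, namely $g_2(x) = a_{21}(x/q) g_1(x/q) + a_{22}(x/q) g_2(x/q) + b_2(x/q)$, and multiply it through by $a_{12}(x)$. On the left the factor $a_{12}(x) g_2(x)$ is replaced using the first identity above. On the right the term $a_{12}(x) a_{22}(x/q) g_2(x/q)$ is rewritten as $\frac{a_{12}(x)}{a_{12}(x/q)} a_{22}(x/q) \cdot \big( a_{12}(x/q) g_2(x/q) \big)$ and the bracketed factor is eliminated via the second identity. This produces a single relation among $g_1(x/q)$, $g_1(x)$, $g_1(qx)$ and the inhomogeneities $b_1(x)$, $b_1(x/q)$, $b_2(x/q)$, with $g_2$ entirely removed.

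The final step is purely a matter of bookkeeping: moving all terms to one side and collecting coefficients. The coefficient of $g_1(x/q)$ becomes
\[
\frac{a_{12}(x)}{a_{12}(x/q)}\big\{ a_{11}(x/q) a_{22}(x/q) - a_{12}(x/q) a_{21}(x/q) \big\},
\]
after writing $a_{12}(x) a_{21}(x/q) = \frac{a_{12}(x)}{a_{12}(x/q)} a_{12}(x/q) a_{21}(x/q)$ and factoring; the coefficient of $g_1(x)$ becomes $-\big\{ a_{11}(x) + \frac{a_{12}(x)}{a_{12}(x/q)} a_{22}(x/q) \big\}$; the coefficient of $g_1(qx)$ is $1$; and the inhomogeneous part is $-b_1(x) + \frac{a_{12}(x)}{a_{12}(x/q)} a_{22}(x/q) b_1(x/q) - a_{12}(x) b_2(x/q)$. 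This is exactly the stated equation. I do not expect any genuine obstacle; the only care required is tracking the argument shifts $x \mapsto x/q$ and recognizing the determinant-type combination $a_{11}(x/q) a_{22}(x/q) - a_{12}(x/q) a_{21}(x/q)$ that emerges as the $g_1(x/q)$ coefficient.
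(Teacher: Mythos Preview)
Your proposal is correct and follows essentially the same elimination strategy as the paper: shift $x\mapsto x/q$, use the first equation (at $x$ and at $x/q$) to solve for $a_{12}(x)g_2(x)$ and $a_{12}(x/q)g_2(x/q)$, and substitute into the second equation at $x/q$. The paper's own proof is a two-sentence sketch of exactly this procedure, so your write-up is simply a more explicit version of it.
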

\begin{proof}
We replace $x$ by $x/q$ in equation~\eqref{eq:g1qxg2qx} and eliminate the term $g_2(x/q)$.
Next, we eliminate the term $g_2 (x)$ by using the first equation of~\eqref{eq:g1qxg2qx}.
Then we obtain the proposition.
\end{proof}

We apply Proposition~\ref{prop:g1g2single} to equation~\eqref{eq:clahg} by setting
\[
\widehat{Y}(x)=\begin{pmatrix}
\widehat{y}_0(x)\\
\widehat{y}_1(x)
\end{pmatrix}.
\]
Then we have
\begin{align}
&\big(x-q^{\lambda+1}\beta^{-1}\big)\widehat{y}_0(x/q) + q^{-\mu} \beta^{-1} (\alpha x-q )\widehat{y}_0(qx) \nonumber
\\
&\qquad {}-\big\{ \big(q^{-\mu}\alpha\beta^{-1}+1\big)x- q\beta^{-1} \big(1 +q^{\lambda-\mu}\big)\big\} \widehat{y}_0(x)=0 . \label{eq:qeqhy0}
\end{align}
Set $\widehat{y}_0(x)=x^{\lambda}h(x)$.
Then the function $h(x)$ satisfies
\begin{align*}
&\big(x-q^{\lambda+1}\beta^{-1}\big)h(x/q) + \big(q^{2\lambda-\mu}\alpha\beta^{-1}x-q^{2\lambda-\mu+1}\beta^{-1}\big)h(qx) \nonumber\\
&\qquad {}-\big\{ \big(q^{\lambda-\mu}\alpha\beta^{-1}+q^{\lambda}\big)x-q^{\lambda+1}\beta^{-1} - q^{2\lambda-\mu+1}\beta^{-1}\big\}h(x)=0.
\end{align*}
In the case $q^{\lambda} = \beta $, it is written as the standard form of the $q$-hypergeometric equation
\begin{equation}
(x-q)g(x/q)+(abx-c)g(qx)-\{ (a+b)x-q-c \}g(x)=0 \label{eq:stqHGrel}
\end{equation}
with the parameters
\begin{equation*}
a=q^{-\mu}\alpha,\qquad
b=\beta,\qquad
c=q^{-\mu+1} \beta .
\end{equation*}
We can similarly obtain the $q$-difference equation for the function $\widehat{y}_1(x) $ as
\begin{align*}
&\big(x-q^{\lambda+1}\beta^{-1}\big)\widehat{y}_1(x/q) + q^{-\mu} \beta^{-1} (\alpha x- 1 )\widehat{y}_1(qx)
\\
&\qquad {}-\big\{ \big(q^{-\mu}\alpha\beta^{-1}+1\big)x- \beta^{-1} \big(q +q^{\lambda-\mu} \big)\big\} \widehat{y}_1(x)=0,
\end{align*}
and it is transformed to the standard form of the $q$-hypergeometric equation with the parameters
\begin{equation*}
a=q^{-\mu}\alpha,\qquad
b=\beta,\qquad
c=q^{-\mu}\beta .
\end{equation*}

If we apply Theorem~\ref{thm:qcint} by choosing the function $Y(x)$ as $Y(x)= x^{\mu}(\alpha x;q)_{\infty}/(\beta x;q)_{\infty}$, we obtain $q$-integral representations of solutions to the equation $E_{\textbf{F},b}$ in equation~\eqref{eq:clahg}.
We~in\-ves\-tigate the assumption of Theorem~\ref{thm:qcint} for the functions including $ x^{\mu}(\alpha x;q)_{\infty}/(\beta x;q)_{\infty}$.
\begin{Proposition} \label{prop:Yscon}
Let $\xi, \alpha_1, \dots, \alpha_N, \beta_1, \dots, \beta_N \in \Cplx \setminus \{ 0 \} $ such that $\beta_1 \xi, \dots, \beta_N \xi \not \in q^{\Zint} $.
Set
\begin{equation}
 y(s) = s^{\mu} \frac{(\alpha_1 s, \dots, \alpha_N s; q)_{\infty}}{(\beta_1 s, \dots, \beta_N s; q)_{\infty}} . \label{eq:Ysdef}
\end{equation}
\begin{enumerate}
\item [$(i)$]
If $\mu >0 $, then there exist $ \varepsilon, C_1 \in \Rea_{>0} $ and $M_1 \in \Zint $ such that $|y(s ) \mid \leq C_1 |s |^{\varepsilon} $ for any $s \in \{ q^{n}\xi \mid n \geq M_1, \, n \in \Zint \} $.

\item [$(ii)$] Let $\nu \in \Rea $. If $| q |^{\nu -\mu} |\alpha_1 \cdots \alpha_N /( \beta_1 \cdots \beta_N )| < 1$, then there exist $\varepsilon, C_2 \in \Rea_{>0} $ and $M_2 \in \Zint $ such that $|y (s ) | \leq C_2 |s | ^{\nu - \varepsilon} $ for any $s \in \{ q^{n}\xi \mid n \leq M_2, \, n \in \Zint \} $.
\end{enumerate}
\end{Proposition}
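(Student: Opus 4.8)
The plan is to treat the two regimes separately: part $(i)$ concerns $n\to+\infty$, where $s=q^{n}\xi\to 0$, while part $(ii)$ concerns $n\to-\infty$, where $s=q^{n}\xi\to\infty$. In both cases the analysis reduces to understanding the infinite products $(\alpha_i s;q)_\infty$ and $(\beta_i s;q)_\infty$, and the key structural fact is that numerator and denominator carry the same number $N$ of Pochhammer factors.

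For part $(i)$, as $n\to+\infty$ we have $\alpha_i s\to 0$ and $\beta_i s\to 0$ for every $i$. Since $(a;q)_\infty$ is continuous in $a$ with $(0;q)_\infty=1$, each factor $(\alpha_i s;q)_\infty$ and $(\beta_i s;q)_\infty$ tends to $1$; the denominator never vanishes on the sequence because $\beta_i\xi\notin q^{\Zint}$ forces $1-q^{j}\beta_i q^{n}\xi\neq 0$ for all $j\ge 0$, $n\in\Zint$. Hence the ratio of products tends to $1$ and is bounded, say by $2$, for all $n\ge M_1$ with $M_1$ large. As $\mu$ is real, $|s^{\mu}|=|s|^{\mu}$, so $|y(s)|\le 2|s|^{\mu}$ and one takes $\varepsilon=\mu$. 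This part is routine.

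The substance is in part $(ii)$. Writing $m=-n\to+\infty$ and $a_i=\alpha_i\xi$, $b_i=\beta_i\xi$, I would first record the reflection identity
\[
\big(a q^{-m};q\big)_\infty=(a;q)_\infty\prod_{k=1}^{m}\big(1-a q^{-k}\big)=(a;q)_\infty(-a)^{m}q^{-m(m+1)/2}(q/a;q)_m,
\]
obtained by reindexing the product and factoring $-aq^{-k}$ out of each term. The only genuinely large factor is $q^{-m(m+1)/2}$, whose modulus grows super-exponentially. The central observation is that forming the quotient of the two products of $N$ factors each cancels these super-exponential terms identically, leaving
\[
\frac{\prod_{i=1}^{N}(\alpha_i s;q)_\infty}{\prod_{i=1}^{N}(\beta_i s;q)_\infty}=\frac{\prod_i (a_i;q)_\infty}{\prod_i (b_i;q)_\infty}\left(\frac{\alpha_1\cdots\alpha_N}{\beta_1\cdots\beta_N}\right)^{m}\frac{\prod_i (q/a_i;q)_m}{\prod_i (q/b_i;q)_m}.
\]
Here $\prod_i a_i/\prod_i b_i=\prod_i\alpha_i/\prod_i\beta_i$ since the $\xi^{N}$ cancels. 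The numerator products $(q/a_i;q)_m$ converge (hence are bounded), and the denominator products converge to $(q/b_i;q)_\infty\neq 0$ — nonvanishing precisely because $\beta_i\xi\notin q^{\Zint}$ — so the whole finite-product factor is bounded. Thus the ratio of Pochhammer products is $O\!\big(|\prod\alpha_i/\prod\beta_i|^{m}\big)$ as an honest upper bound (if some $(a_i;q)_\infty=0$ the numerator vanishes and the bound is trivial, so no division by zero occurs).

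Finally, combining with $|s^{\mu}|=|q|^{-m\mu}|\xi|^{\mu}$ gives $|y(s)|\le C\big(|q|^{-\mu}|\prod\alpha_i/\prod\beta_i|\big)^{m}$, while the target $|s|^{\nu-\varepsilon}$ equals $|\xi|^{\nu-\varepsilon}(|q|^{-(\nu-\varepsilon)})^{m}$. The ratio of the two geometric bases is $|q|^{-\varepsilon}\rho$ with $\rho:=|q|^{\nu-\mu}|\prod\alpha_i/\prod\beta_i|<1$ by hypothesis; since $|q|^{-\varepsilon}\to1$ as $\varepsilon\to0^{+}$, I fix $\varepsilon>0$ small enough that $|q|^{-\varepsilon}\rho<1$, so the geometric factor stays bounded and $|y(s)|\le C_2|s|^{\nu-\varepsilon}$ for $n\le M_2$. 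I expect the main obstacle to be establishing the reflection identity cleanly and verifying the exact cancellation of the $q^{-Nm(m+1)/2}$ terms; once that bookkeeping is done, the conclusion is a direct geometric comparison made possible by the strict inequality in the hypothesis.
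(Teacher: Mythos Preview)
Your proof is correct. Part~$(i)$ matches the paper exactly. For part~$(ii)$ you take a different but closely related route: the paper works recursively, using only the one-step functional equation
\[
y(s/q)=q^{-\mu}\frac{\prod_i(1-\alpha_i s/q)}{\prod_i(1-\beta_i s/q)}\,y(s),
\]
observes that the ratio $|y(s/q)/y(s)|$ tends to $|q|^{-\mu}|\prod\alpha_i/\prod\beta_i|$ as $s\to\infty$, and iterates the resulting inequality. You instead unroll this recursion into the closed-form reflection identity and obtain an explicit expression for $(\alpha_i s;q)_\infty/(\beta_i s;q)_\infty$, from which the cancellation of the super-exponential factors $q^{-Nm(m+1)/2}$ is visible by inspection. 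The paper's argument is shorter and sidesteps the bookkeeping you flagged as the main obstacle; your argument buys an exact formula, making the geometric rate $\big|\prod\alpha_i/\prod\beta_i\big|^{m}$ and the role of the balancing ($N$ numerator factors versus $N$ denominator factors) completely transparent. Both ultimately rest on the same mechanism.
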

\begin{proof}
We obtain (i) by setting $\varepsilon = \mu $, because $ s^{-\mu} y(s) \to 1 $ as $s \to 0$.

We show (ii).
It follows from the assumption that there exists $\varepsilon \in \Rea_{>0} $ such that
\begin{equation}
| q | ^{-\mu} |\alpha_1 \cdots \alpha_N /( \beta_1 \cdots \beta_N )| <|q |^{2 \varepsilon} |q|^{-\nu} .
\label{eq:q-muineq}
\end{equation}
Since
\begin{equation}
y(s/q) = q^{-\mu} \frac{(1-\alpha_1 s/q) \cdots (1-\alpha_N s/q)}{(1-\beta_1 s/q) \cdots (1-\beta_N s/q)} y(s), \label{eq:Ysrel}
\end{equation}
we have $|y( s/q) / y( s) | \leq |q|^{-\mu - \varepsilon} |\alpha_1 \cdots \alpha_N /( \beta_1 \cdots \beta_N )|$ for sufficiently large $s$.
Then there exists $M' \in \Zint $ such that
\begin{equation*}
\big|y\big( q^{M' + n'} \xi \big) \big| \leq \big(|q|^{-\mu - \varepsilon} |\alpha_1 \cdots \alpha_N /( \beta_1 \cdots \beta_N )|\big)^{-n'} \big| y\big( q^{M'} \xi \big) \big|
\end{equation*}
for any non-positive integer $n' $, and it follows that there exist $C'\in \Rea_{>0} $ and $M'' \in \Zint $ such that $|y( q^{n} \xi ) | \leq C' (|q|^{-\mu - \varepsilon} |\alpha_1 \cdots \alpha_N /( \beta_1 \cdots \beta_N )|)^{-n} $ for any integer $n$ such that $n \leq M''$.
By applying equation~\eqref{eq:q-muineq}, we have
\begin{equation*}
\big|y\big( q^{n} \xi \big) \big| < C' \big(|q|^{-\nu + \varepsilon} \big)^{-n} = \big( C' | \xi|^{-\nu + \varepsilon} \big) |q ^n \xi |^{\nu - \varepsilon} .
\end{equation*}
Therefore, we obtain (ii).
\end{proof}
\begin{Remark}
The function $y(s) $ in equation~\eqref{eq:Ysdef} satisfies equation~\eqref{eq:Ysrel}.
The function
\begin{equation*}
y(s)= s^{\mu '} \frac{(q/(\beta_1 s), \dots,q/(\beta_N s);q)_{\infty}}{(q/(\alpha_1 s), \dots, q/(\alpha_N s);q)_{\infty}}, \qquad
q^{\mu '} \frac{\alpha_1 \cdots \alpha_N}{\beta_1 \cdots \beta_N} =q^{\mu}
\end{equation*}
also satisfies \eqref{eq:Ysrel}.
Proposition~\ref{prop:Yscon} is valid by replacing the assumption with $\alpha_1 \xi, \dots, \alpha_N \xi \not \in q^{\Zint} $.
\end{Remark}

We apply Theorem~\ref{thm:qcint} by picking up a solution of $y(qx)=B(x)y(x) $ determined by equation~\eqref{eq:gqxBxgxhge} as $y(x) = x^{\mu}(\alpha x;q)_{\infty}/(\beta x;q)_{\infty}$.
The assumption of Theorem~\ref{thm:qcint} is satisfied by the condition of Proposition~\ref{prop:Yscon} by setting $N=1 $ and $\nu =\lambda $.
Therefore, we obtain the following theorem.
\begin{Theorem} \label{thm:qcintqhge}
Let $\xi, \alpha, \beta \in \Cplx \setminus \{ 0 \} $ such that $ \beta \xi \not \in q^{\Zint}$.
If $\mu >0 $ and $| q |^{\lambda -\mu} |\alpha / \beta | < 1$, then the function $\widehat{Y}(x) $ defined by
\begin{align*}
& \widehat{Y}(x) = \begin{pmatrix} \widehat{y}_{0}(x) \\ \widehat{y}_{1}(x) \end{pmatrix},
 \\[1mm]
& \widehat{y}_{0}(x) = \int^{\xi \infty}_{0}\frac{P_{\lambda}(x, s)}{s} s^{\mu}\frac{(\alpha s;q)_{\infty}}{(\beta s;q)_{\infty}} \, {\rm d}_{q}s, \qquad
\widehat{y}_{1}(x) = \int^{\xi \infty}_{0}\frac{P_{\lambda}(x, s)}{s-1/\alpha}s^{\mu}\frac{(\alpha s;q)_{\infty}}{(\beta s;q)_{\infty}} \, {\rm d}_{q}s
\end{align*}
is convergent and it satisfies the equation $E_{{\bf F}, {\bf b}}$ given in equation~\eqref{eq:clahg}.
\end{Theorem}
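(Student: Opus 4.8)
The plan is to read the statement as a direct application of Theorem~\ref{thm:qcint} to the scalar solution $Y(x)=y(x)=x^{\mu}(\alpha x;q)_{\infty}/(\beta x;q)_{\infty}$ of the single $q$-difference equation $y(qx)=B(x)y(x)$ determined by equation~\eqref{eq:gqxBxgxhge}. Here the size is $m=1$ and there is a single base point $b_{1}=1/\alpha$ (together with $b_{0}=0$), so $N=1$ and the two Jackson integrals $\widehat{y}_{0}$ and $\widehat{y}_{1}$ in the statement are exactly the components $\widehat{Y}_{0}(x)=\int^{\xi\infty}_{0}\frac{P_{\lambda}(x,s)}{s-b_{0}}Y(s)\,{\rm d}_{q}s$ and $\widehat{Y}_{1}(x)=\int^{\xi\infty}_{0}\frac{P_{\lambda}(x,s)}{s-b_{1}}Y(s)\,{\rm d}_{q}s$ attached to the $q$-convolution $c_{\lambda}$. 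Since $c_{\lambda}(\mathbf{B})=\mathbf{F}$ is the tuple whose associated equation $E_{\mathbf{F},\mathbf{b}}$ is precisely equation~\eqref{eq:clahg}, it remains only to verify that $y(x)$ meets the convergence hypotheses of Theorem~\ref{thm:qcint}, i.e.\ conditions $(a)$ and $(b)$ of Proposition~\ref{prop:qcintconv}.

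First I would observe that the assumption $\beta\xi\not\in q^{\Zint}$ guarantees that $y(q^{n}\xi)$ is well defined for every $n\in\Zint$, since the poles of $y$ can occur only when $\beta s$ is a power $q^{k}$ with $k\in\Zint$. The verification of $(a)$ and $(b)$ is then exactly the content of Proposition~\ref{prop:Yscon} specialized to a single numerator parameter $\alpha$, a single denominator parameter $\beta$, and the choice $\nu=\lambda$. Concretely, the hypothesis $\mu>0$ gives condition $(a)$ through part $(i)$ (with $\varepsilon_{1}=\mu$, using $s^{-\mu}y(s)\to 1$ as $s\to 0$), while the hypothesis $|q|^{\lambda-\mu}|\alpha/\beta|<1$ is precisely the inequality of part $(ii)$ for $N=1$ and $\nu=\lambda$, and therefore yields condition $(b)$. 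Because $m=1$, the scalar bounds on $|y(s)|$ coincide with the componentwise bounds on $|[Y(s)]_{k}|$ required by Proposition~\ref{prop:qcintconv}, so nothing further is needed here.

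With $(a)$ and $(b)$ established, Theorem~\ref{thm:qcint} applies verbatim: both Jackson integrals converge absolutely, and the vector $\widehat{Y}(x)$ with components $\widehat{y}_{0}(x)$ and $\widehat{y}_{1}(x)$ satisfies $E_{c_{\lambda}(\mathbf{B}),\mathbf{b}}=E_{\mathbf{F},\mathbf{b}}$, which is equation~\eqref{eq:clahg}. The only genuine analytic work sits inside condition $(b)$ and is already isolated in Proposition~\ref{prop:Yscon}; I expect the main obstacle there to be the control of the ratio $|y(s/q)/y(s)|$ for large $|s|$, which tends to $|q|^{-\mu}|\alpha/\beta|$ and fixes the effective decay exponent, so that the telescoping estimate delivers the power $|s|^{\lambda-\varepsilon}$ exactly when the displayed inequality holds. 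As a minor bookkeeping point I would record that in $\widehat{y}_{1}$ the factor $1/(s-1/\alpha)$ has a removable singularity against the zero of $(\alpha s;q)_{\infty}$ at $s=1/\alpha$, so the integrand stays regular along the whole lattice $q^{n}\xi$ even though no condition on $\alpha\xi$ is imposed.
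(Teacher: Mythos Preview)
Your proposal is correct and follows essentially the same route as the paper: the paper proves Theorem~\ref{thm:qcintqhge} by applying Theorem~\ref{thm:qcint} to the scalar solution $y(x)=x^{\mu}(\alpha x;q)_{\infty}/(\beta x;q)_{\infty}$ of $y(qx)=B(x)y(x)$ from equation~\eqref{eq:gqxBxgxhge}, verifying conditions $(a)$ and $(b)$ via Proposition~\ref{prop:Yscon} with $N=1$ and $\nu=\lambda$. Your additional remarks on the role of $\beta\xi\notin q^{\Zint}$ and the removable singularity in $\widehat{y}_{1}$ are correct bookkeeping that the paper leaves implicit.
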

The function $\widehat{y}_{0}(x)$ is expressed as
\begin{align*}
& \widehat{y}_{0}(x) = \int_0^{\xi \infty}s^{\mu -1} \frac{\big(q^{\lambda+1}s/x, \alpha s ; q\big)_{\infty}}{(qs/x, \beta s ; q)_{\infty}}\,{\rm d}_{q}s = (1-q)\sum_{n=-\infty}^{\infty}(q^n\xi)^{\mu}\frac{\big(q^{\lambda+n+1}\xi / x, q^n \xi\alpha ; q\big)_{\infty}}{\big(q^{n+1}\xi / x, q^n \xi\beta ; q\big)_{\infty}} .
\end{align*}
This is a bilateral basic hypergeometric series of the form $_2 \psi_2$.
If we specialize the value $\xi $, then we obtain the unilateral basic hypergeometric series.

If $\xi =1 /\alpha $, then it follows from $(q^n;q)_{\infty}=0$ for $n\in \mathbb{Z}_{\le 0}$ that
\begin{align}
 \widehat{y}_0(x) &= (1-q)\sum_{n=1}^{\infty}(q^n/\alpha)^{\mu} \frac{\big(q^{\lambda+n+1}/(\alpha x),q^n;q\big)_{\infty}}{\big(q^{n+1}/(\alpha x),q^n\beta/\alpha;q\big)_{\infty}} \nonumber
\\
& = (1-q)\alpha^{-\mu}q^{\mu}\frac{\big(q^{\lambda+2}/(\alpha x), q;q\big)_{\infty}}{\big(q^2/(\alpha x), q\beta/\alpha ;q\big)_{\infty}} \bigg\{ 1+ \sum_{n=2}^{\infty} \frac{\big(q^{2}/(\alpha x);q\big)_{n-1} (q \beta/\alpha;q)_{n-1}}{\big(q^{\lambda+2}/(\alpha x);q\big)_{n-1} (q ;q)_{n-1}} (q^{\mu})^{n-1} \bigg\} \nonumber
\\
& = (1-q)\alpha^{-\mu}q^{\mu}\frac{\big(q^{\lambda+2}/(\alpha x), q;q\big)_{\infty}}{\big(q^2/(\alpha x), q\beta/\alpha ;q\big)_{\infty}}
 \_2\phi_1 \bigg(\!\! \begin{array}{c} q^2/(\alpha x), q\beta/\alpha\\
 q^{\lambda+2}/(\alpha x) \end{array}\! ;q,q^{\mu} \bigg). \label{eq:hy0sum}
\end{align}
The assumption of Theorem~\ref{thm:qcint} is satisfied by the condition $\mu>0 $ solely, because the condition $|y(s ) | \leq C_2 |s | ^{\lambda - \varepsilon_2} $ for any $s \in \{ q^{n}\xi \mid n \leq 0, \, n \in \Zint \} $ is satisfied by the condition $y(s) = s^{\mu}(\alpha s;q)_{\infty}/(\beta s;q)_{\infty}=0$ for any $s \in \{ q^{n}\xi \mid n \leq 0, \, n \in \Zint \} $.
If $\xi=q^{-\lambda}x$, then we have
\begin{align*}
 \widehat{y}_0(x)& = (1-q)\sum_{n=0}^{\infty}\big(q^{n-\lambda}x \big)^{\mu}\frac{\big(q^{n+1}, q^{n-\lambda} \alpha x ; q\big)_{\infty}}{\big(q^{n+1-\lambda}, q^{n-\lambda}x \beta ; q\big)_{\infty}} \nonumber\\
& = (1-q)q^{-\lambda\mu}x^{\mu}\frac{\big( q^{-\lambda}\alpha x, q ;q\big)_{\infty}}{\big( q^{-\lambda}\beta x, q^{1-\lambda} ;q\big)_{\infty}}
 \_2\phi_1 \biggl(\!\! \begin{array}{c} q^{-\lambda}\beta x, q^{1-\lambda} \\
 q^{-\lambda}\alpha x \end{array}\! ;q,q^{\mu} \biggr).
\end{align*}
The assumption of Theorem~\ref{thm:qcint} is satisfied by the condition $\mu>0 $ solely.

Although the evaluation $\xi= 1/\beta $ is inconsistent with the assumption of Theorem~\ref{thm:qcintqhge}, we evaluate $\xi= A/\beta $, change the scaling of the function $\widehat{y}_0(x) $ and take the limit $A \to 1$.
Set $\xi=A /\beta $.
Under the hypothesis that the limit $A \to 1 $ commutes with the infinite summation, we have
\begin{align}
(1-A)\widehat{y}_0(x)& = (1-q)\sum_{n=-\infty}^{\infty}(1-A)(q^n\xi)^{\mu}\frac{\big(A q^{\lambda+n+1} / (\beta x),A q^n \alpha / \beta ; q\big)_{\infty}}{\big(A q^{n+1} /(\beta x ), A q^n ; q\big)_{\infty}} \nonumber
\\
&\xrightarrow[A\to 1]{} (1-q) \sum_{n=-\infty}^{ 0} (q^n/\beta )^{\mu}\frac{\big( q^{\lambda+n+1} /(\beta x ), q^n \alpha / \beta ; q\big)_{\infty}}{\big( q^{n+1}/(\beta x),q; q\big)_{\infty} (q^n;q)_{-n}} \nonumber
\\
& = (1-q)\beta^{-\mu}\frac{\big(q^{\lambda+1}/(\beta x), \alpha/\beta;q\big)_{\infty}}{(q/(\beta x), q;q)_{\infty}} \,_2\phi_1 \biggl(\!\! \begin{array}{c} q^{-\lambda}\beta x, q\beta/\alpha\\
 \beta x \end{array}\! ;q,q^{\lambda-\mu} \frac{\alpha}{\beta} \biggr).
\label{eq:qHGxiAbeta}
\end{align}
Similarly, we evaluate $\xi= Ax $, change the scaling of the function $\widehat{y}_0(x) $ and take the limit $A \to 1$.
Under the hypothesis that the limit $A \to 1 $ commutes with the infinite summation, we have
\begin{align}
& (1-A)\widehat{y}_0(x)\xrightarrow[A\to 1]{} (1-q)q^{-\mu}x^{\mu}\frac{\big( \alpha x/q, q^{\lambda} ;q\big)_{\infty}}{(\beta x/q, q ;q)_{\infty}} \, \!_2\phi_1 \biggl(\!\! \begin{array}{c} q^2/(\alpha x), q^{1-\lambda} \\
 q^2/(\beta x) \end{array}\! ;q,q^{\lambda-\mu} \frac{\alpha}{\beta} \biggr).
\label{eq:qHGxiAx}
\end{align}

To obtain alternative results corresponding to the specialization $\xi= 1/\beta $ and $\xi= x $, we replace the functions with
\begin{align*}
& P_{\lambda}(x, s) = (x/s)^{\lambda} \frac{(x/s;q)_{\infty}}{\big(q^{-\lambda} x/s ;q\big)_{\infty}}, \qquad
y(s)=s^{\mu '} \frac{(q/(\beta s);q)_{\infty}}{(q/(\alpha s);q)_{\infty}}
\end{align*}
with the condition $ q^{\mu '} \alpha /\beta =q^{\mu}$.
Then the function $y(x) $ also satisfies the $q$-difference equation $y(qx)=B(x)y(x) $ with the condition in equation~\eqref{eq:gqxBxgxhge}.
The function $\widehat{y}_{0}(x)$ obtained by the convolution is expressed as
\begin{align*}
 \widehat{y}_{0}(x) &= \int_0^{\xi \infty}s^{\mu ' -1} (x/s)^{\lambda} \frac{(x /s, q/ (\beta s ); q)_{\infty}}{\big(q^{-\lambda} x/s, q/ (\alpha s ) ; q\big)_{\infty}}\,{\rm d}_{q}s \nonumber\\
& = (1-q) x^{\lambda} \sum_{n=-\infty}^{\infty}(q^n\xi)^{\mu ' -\lambda} \frac{\big(x q^{-n} / \xi, q^{1-n} / (\beta \xi ) ; q\big)_{\infty}}{\big(x q^{-\lambda -n} / \xi, q^{1-n} / (\alpha \xi ) ; q\big)_{\infty}},
\end{align*}
and it also satisfies equation~\eqref{eq:qeqhy0} under the condition of convergence.
If $ \xi = 1/\beta $, then
\begin{align*}
\widehat{y}_{0}(x) &= (1-q)x^{\lambda} \sum_{n=-\infty}^{0}(q^n/\beta )^{\mu '-\lambda} \frac{\big(x q^{-n} \beta, q^{1-n} ; q\big)_{\infty}}{\big(q^{-\lambda} \beta x q^{-n}, q^{1-n} \beta / \alpha ; q\big)_{\infty}}
\\
& = (1-q)\beta ^{\lambda -\mu '} x^{\lambda} \frac{(\beta x, q ; q)_{\infty}}{\big(q^{-\lambda} \beta x, q \beta / \alpha ; q\big)_{\infty}} \biggl\{ 1+ \sum_{m=1}^{\infty} \frac{\big(q^{-\lambda} \beta x;q\big)_{m} (q \beta / \alpha ;q)_{m}}{(\beta x ;q)_{m} (q ;q)_{m}} \big(q^{\lambda -\mu '}\big)^{m} \biggr\}
 \\
& = (1-q)\beta ^{\lambda -\mu '} x^{\lambda} \frac{(\beta x, q ; q)_{\infty}}{\big(q^{-\lambda} \beta x, q \beta / \alpha ; q\big)_{\infty}}
 \_2\phi_1 \biggl(\!\! \begin{array}{c} q^{-\lambda} \beta x, q\beta/\alpha\\
 \beta x \end{array}\! ;q,q^{\lambda -\mu} \frac{\alpha}{\beta} \biggr),
\end{align*}
and it converges under the condition $| q |^{\lambda -\mu} |\alpha / \beta | < 1$.
If $ \xi = x$, then
\begin{align*}
\widehat{y}_{0}(x) &= (1-q) x^{\lambda} \sum_{n=-\infty}^{-1}(q^n x )^{\mu ' -\lambda} \frac{\big( q^{-n}, q^{1-n} / (\beta x) ; q\big)_{\infty}}{\big(q^{-\lambda} q^{-n}, q^{1-n} / (\alpha x) ; q\big)_{\infty}}
\\
& = (1-q) q^{\lambda -\mu} \frac{\alpha}{\beta} x^{\mu '} \frac{\big( q^2 / (\beta x), q ; q\big)_{\infty}}{\big( q^2 / (\alpha x), q^{1-\lambda} ; q\big)_{\infty}}
 \_2\phi_1 \biggl(\!\! \begin{array}{c} q^2 / (\alpha x), q^{1-\lambda},
 \\
 q^2 / (\beta x) \end{array}\! ;q,q^{\lambda -\mu} \frac{\alpha}{\beta} \biggr),
\end{align*}
and it converges under the condition $| q |^{\lambda -\mu} |\alpha / \beta | < 1$.
Although these functions do not coincides with the functions in equations~\eqref{eq:qHGxiAbeta},~\eqref{eq:qHGxiAx}, respectively, they match up to the pseudo-constant.
Namely, the functions
\begin{align*}
& \frac{\big(q^{\lambda+1}/(\beta x) ; q\big)_{\infty}}{(q/(\beta x) ;q)_{\infty}} \Big/ \bigg( x^{\lambda} \frac{(\beta x ; q)_{\infty}}{\big(q^{-\lambda} \beta x ; q\big)_{\infty}} \bigg),\qquad
x^{\mu}\frac{(\alpha x/q;q)_{\infty}}{( \beta x/q;q)_{\infty}} \Big/ \bigg(x^{\mu '} \frac{\big( q^2 / (\beta x) ; q\big)_{\infty}}{\big( q^2 / (\alpha x) ; q\big)_{\infty}} \bigg)
\end{align*}
are invariant under the transformation $x \to qx $.

It is well known that the standard form of the $q$-hypergeometric equation given in equation~\eqref{eq:stqHGrel} has the local solutions ${}_2\phi_1(a,b;c;q,x) $ and $x^{1-\gamma}\!_2\phi_1\big(qa/c,qb/c;q^2/c;q,x\big)$ ($q^{\gamma}= c$) about $x=0$ and also has the local solutions $x^{-\alpha^{\prime}}\!_2\phi_1(a,qa/c;qa/b;q,qc/(abx))$ and $x^{-\beta^{\prime}}\!_2\phi_1(b,qb/c;qb/a;\allowbreak q,qc/(abx))$, $q^{\alpha^{\prime}}=a$, $q^{\beta^{\prime}}=b$, about $x= \infty $.
We connect the functions obtained by the $q$-convolution with the local solutions.
It is known in~\cite[formula~(1.4.1)]{GR} that
\begin{equation}
{}_2\phi_1 \biggl(\!\! \begin{array}{c} a,b \\
 c \end{array}\! ;q,z \biggr)
 = \frac{(b,az;q)_{\infty}}{(c,z;q)_{\infty}} \,_2\phi_1 \biggl(\!\! \begin{array}{c} c/b,z \\
 az \end{array}\! ;q, b \biggr)
\label{eq:GR141}
\end{equation}
under the condition $|z|<1$ and $|b|<1$.
It follows from equation~\eqref{eq:GR141} that the function $\widehat{y}_0(x)$ in the case $\xi=1 /\alpha$ is written as
\begin{align*}
& \widehat{y}_0(x) =(1-q)\alpha^{-\mu}q^{\mu}\frac{\big(q, q^{\mu+1}\beta/\alpha;q\big)_{\infty}}{(q\beta/\alpha, q^{\mu};q)_{\infty}} \,_2\phi_1 \biggl(\!\! \begin{array}{c} q^{\lambda},q^{\mu}\\
 q^{\mu+1}\beta/\alpha \end{array}\! ;q, \frac{q^2}{\alpha x} \biggr).
\end{align*}
which corresponds to a local solution about $x=\infty$.
Similarly, the function $\widehat{y}_0(x)$ in the case $\xi= x $ is written as another local solution about $x=\infty $.
The function $\widehat{y}_0(x)$ in the case $\xi= 1/ \beta$ and the function $\widehat{y}_0(x)$ in the case $ \xi=q^{-\lambda}x$ are written as local solutions about $x=0$.

\subsection{\texorpdfstring{\mathversion{bold}$q$}{q}-convolution and
\texorpdfstring{\mathversion{bold}$q$}{q}-Jordan Pochhammer equation}

Set
\begin{equation*}
y(x)=x^{\mu} \prod_{j=1}^N \frac{(\alpha_j x;q)_{\infty}}{(\beta_j x;q)_{\infty}}.
\end{equation*}
Then $y(qx) / y(x) = q^{\mu} (1-\beta_1 x) \cdots (1-\beta_N x)/ \{ (1-\alpha_1 x) \cdots (1-\alpha_N x) \}$.
Namely, the function $y(x)$ satisfies the linear $q$-difference equation $y(qx)=B(x)y(x) $, where
\begin{align*}
& B(x)=B_{\infty}+ \sum_{j=1}^N \frac{B_j}{1-x/b_j}, \qquad
B_{\infty}=q^{\mu}\frac{\beta_1 \cdots \beta_N}{\alpha_1 \cdots \alpha_N},
\\
& B_k =q^{\mu} \frac{\alpha_k -\beta_k}{\alpha_k} \prod_{j=1, j\neq k}^N \frac{\alpha_k -\beta_j}{\alpha_k - \alpha_j}, \qquad
b_k=\frac{1}{\alpha_k},\qquad
k=1, \dots,N.
\end{align*}
Then $ B_0=1-B_{\infty}-B_1 - \dots -B_N=1-q^{\mu}$.
We apply the convolution $c_{\lambda}$.
Then the matrices $ c_{\lambda} (\textbf{B}) = \textbf{F}=(F_{\infty}; F_1, \dots,F_N)$ are written as
\begin{align}
& F_i = \begin{pmatrix}
 {} & {} & {} & O & {} & {}
 \\
 1-q^{\mu}& B_1 & \cdots & B_i - 1 + q^\lambda & \cdots & B_N
 \\
 {} & {} & {} & O & {} & {}
 \end{pmatrix}
 ((i+1)\text{-st}), \qquad
 1\leq i \leq N, \nonumber
 \\[1mm]
& F_\infty = \begin{pmatrix}
 q^{\mu} & -B_1 & \cdots & -B_N \\
 q^{\mu} -1 & 1-B_1 & \cdots & -B_N \\
 \vdots & \vdots & \ddots & \vdots \\
 q^{\mu} -1 & -B_1 & \cdots & 1- B_N
 \end{pmatrix}\!.
\label{eq:FiJP}
\end{align}
The equation $E_{\textbf{F},b}$ is written as
\begin{align*}
 &\widehat{Y}(qx)
= \biggl(F_{\infty}+\sum_{j=1}^N \frac{F_j}{1-\alpha_j x}\biggr) \widehat{Y}(x) . 
\end{align*}
By applying Proposition~\ref{prop:Yscon} and Theorem~\ref{thm:qcint}, we obtain the following theorem.

\begin{Theorem} \label{thm:qcintqJH}
Let $\xi \in \Cplx \setminus \{ 0 \} $.
If $\mu >0 $ and $\big| q ^{\lambda -\mu} \alpha_1 \cdots \alpha_N / (\beta_1 \cdots \beta_N)\big| < 1$, then the function~$\widehat{Y}(x) $ defined by
\begin{align}\label{eq:qcintqJP}
\begin{split}
& \widehat{Y}(x) = \begin{pmatrix} \widehat{y}_{0}(x) \\ \widehat{y}_{1}(x) \\ \vdots \\ \widehat{y}_{N}(x) \end{pmatrix}\!,\qquad
\\
& \widehat{y}_{0}(x) = \int^{\xi \infty}_{0}\frac{P_{\lambda}(x, s)}{s} s^{\mu} \prod_{j=1}^N \frac{(\alpha_j s;q)_{\infty}}{(\beta_j s;q)_{\infty}} \, {\rm d}_{q}s,
 \\
&\widehat{y}_{k}(x) = \int^{\xi \infty}_{0}\frac{P_{\lambda}(x, s)}{s-1/\alpha_k}s^{\mu}\prod_{j=1}^N \frac{(\alpha_j s;q)_{\infty}}{(\beta_j s;q)_{\infty}} \, {\rm d}_{q}s,\qquad
 k=1,\dots,N
\end{split}
\end{align}
is convergent and it satisfies the equation $E_{c_{\lambda}({\bf B}), {\bf b}}$, where $c_{\lambda}({\bf B})= {\bf F} $ is the tuple defined by the convolution in equation~\eqref{eq:FiJP}.
\end{Theorem}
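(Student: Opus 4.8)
The plan is to recognize that the theorem is an immediate application of Theorem~\ref{thm:qcint} to the scalar solution $y(x)=x^{\mu}\prod_{j=1}^{N}(\alpha_j x;q)_{\infty}/(\beta_j x;q)_{\infty}$, once the two growth conditions $(a)$ and $(b)$ of Proposition~\ref{prop:qcintconv} are checked. Since $y$ is scalar (so $m=1$ and $Y(s)=y(s)$ has a single component), those conditions collapse to a decay estimate near $s=0$ and a controlled growth estimate near $s=\infty$ along the $q$-spiral $\{q^{n}\xi\}_{n\in\Zint}$, and these are supplied verbatim by the two parts of Proposition~\ref{prop:Yscon}. Thus the entire argument is an assembly of results already established, with the only real content being the correct alignment of parameters.

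First I would recall, from the computation preceding the statement, that $y$ satisfies $y(qx)=B(x)y(x)$ with $B(x)=B_{\infty}+\sum_{j=1}^{N}B_j/(1-x/b_j)$ for the explicit scalars $B_{\infty},B_1,\dots,B_N$ and nodes $b_k=1/\alpha_k$; hence $y$ is a solution of $E_{\mathbf{B},\mathbf{b}}$ to which $c_{\lambda}$ applies, producing the tuple $\mathbf{F}$ of~\eqref{eq:FiJP}. Next I would verify condition $(a)$: because $\mu>0$, Proposition~\ref{prop:Yscon}(i) yields $\varepsilon,C_1\in\Rea_{>0}$ and $M_1\in\Zint$ with $|y(s)|\le C_1|s|^{\varepsilon}$ for $s=q^{n}\xi$, $n\ge M_1$, which is exactly condition $(a)$ with $\varepsilon_1=\varepsilon$. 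For condition $(b)$ I would invoke Proposition~\ref{prop:Yscon}(ii) with the choice $\nu=\lambda$: the standing hypothesis $\big|q^{\lambda-\mu}\alpha_1\cdots\alpha_N/(\beta_1\cdots\beta_N)\big|<1$ is precisely the inequality $\big|q^{\nu-\mu}\alpha_1\cdots\alpha_N/(\beta_1\cdots\beta_N)\big|<1$ required there, so it produces $\varepsilon,C_2\in\Rea_{>0}$ and $M_2\in\Zint$ with $|y(s)|\le C_2|s|^{\lambda-\varepsilon}$ for $s=q^{n}\xi$, $n\le M_2$, i.e.\ condition $(b)$ with $\varepsilon_2=\varepsilon$.

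With $(a)$ and $(b)$ in hand, Theorem~\ref{thm:qcint} applies directly and gives both the convergence of each Jackson integral and the fact that $\widehat{Y}(x)$ satisfies $E_{c_{\lambda}(\mathbf{B}),\mathbf{b}}$; since the components $\widehat{y}_0,\dots,\widehat{y}_N$ of $\widehat{Y}$ are exactly the integrals displayed in~\eqref{eq:qcintqJP}, this is the assertion of the theorem. There is no substantial obstacle here — the genuine work was already carried out in Proposition~\ref{prop:qcintconv} and Proposition~\ref{prop:Yscon} — and the only point demanding care is the parameter matching: the single decay exponent furnished by Proposition~\ref{prop:Yscon}(ii) must meet the threshold $\lambda-\varepsilon_2$ of condition $(b)$, which forces $\nu=\lambda$ and thereby explains why the hypothesis is phrased with the exponent $\lambda-\mu$. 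I would also note in passing that for $y(s)$ to be well defined at every $s=q^{n}\xi$ one needs $\beta_j\xi\notin q^{\Zint}$ so that no factor $(\beta_j s;q)_{\infty}$ vanishes; this is the genericity condition under which Proposition~\ref{prop:Yscon} is stated, and the degenerate values of $\xi$ it excludes are exactly those handled by the explicit specializations treated separately.
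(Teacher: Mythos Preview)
Your proposal is correct and matches the paper's own argument: the paper simply states that the theorem follows by applying Proposition~\ref{prop:Yscon} (with $\nu=\lambda$) together with Theorem~\ref{thm:qcint}, which is exactly the assembly you describe. Your remark about the implicit genericity condition $\beta_j\xi\notin q^{\Zint}$ is a helpful clarification that the paper leaves tacit.
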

The functions $\widehat{y}_{k}(x)$, $k=0,1,\dots,N$, are expressed as
\begin{align*}
& \widehat{y}_{0}(x) = (1-q)\sum_{n=-\infty}^{\infty}(q^n\xi)^{\mu}\frac{\big(q^{\lambda+n+1}\xi /x, q^n \alpha_1 \xi,\dots, q^n \alpha_N \xi ; q\big)_{\infty}}{\big(q^{n+1}\xi / x, q^n \beta_1 \xi,\dots, q^n \beta_N \xi ; q\big)_{\infty}},
\\
& \widehat{y}_{1}(x) = (q-1)\alpha_1 \sum_{n=-\infty}^{\infty}(q^n\xi)^{\mu+1}\frac{\big(q^{\lambda+n+1}\xi / x, q^{n+1} \alpha_1 \xi, q^n \alpha_2 \xi,\dots, q^n \alpha_N \xi ; q\big)_{\infty}}{\big(q^{n+1}\xi / x, q^n \beta_1 \xi,q^n \beta_2 \xi,\dots, q^n \beta_N \xi ; q\big)_{\infty}},
 \\[-1mm]
& \qquad \vdots
 \\[-1mm]
& \widehat{y}_{N}(x) = (q-1)\alpha_N \sum_{n=-\infty}^{\infty}(q^n\xi)^{\mu+1}\frac{\big(q^{\lambda+n+1}\xi / x, q^{n} \alpha_1 \xi, \dots, q^{n} \alpha_{N-1} \xi, q^{n+1} \alpha_N \xi ; q\big)_{\infty}}{\big(q^{n+1}\xi / x, q^n \beta_1 \xi,\dots, q^n \beta_{N-1} \xi, q^n \beta_N \xi ; q\big)_{\infty}} .
\end{align*}
These are bilateral basic hypergeometric series of the form $_{N+1} \psi_{N+1}$.
If we specialize the value~$\xi$, then we obtain the unilateral basic hypergeometric series $_{N+1} \phi_{N}$.

Mimachi obtained the $q$-difference systems associated with a Jackson integral of Jordan-Pochhammer type in~\cite{Mim} and discussed the connection problem there.
It would be interesting to combine the results and the methods in~\cite{Mim} to our ones.

\section[q-integral representation of solutions to variants of q-hypergeometric equation]{$\boldsymbol{q}$-integral representation of solutions to variants \\ of $\boldsymbol{q}$-hypergeometric equation} \label{sec:qintvarqhg}

The variants of the $q$-hypergeometric equation were introduced in~\cite{HMST}.
In this section, we obtain them by considering the middle convolution which are related with the $q$-Jordan Pochhammer equation.
Note that the $q$-Jordan Pochhammer equation was obtained by applying the convolution to the scalar $q$-difference equation which the function
\[
y(x)=x^{\mu} (\alpha_1 x, \dots, \alpha_N x ;q)_{\infty} /(\beta_1 x, \dots, \beta_N x ;q)_{\infty}
\]
satisfies.
The middle convolution was defined in Definition~\ref{def:qmc} by taking a quotient space divided by the invariant spaces $\mathcal{K}$ and $\mathcal{L} $.
We investigate the case $N=2,3$ with the condition that the space $\mathcal{K}$ or/and the space $\mathcal{L} $ is/are non-zero.

\subsection[q-middle convolution and variants of q-hypergeometric equation of degree two]{$\boldsymbol{q}$-middle convolution and variants of $\boldsymbol{q}$-hypergeometric equation\\ of degree two} \label{sec:qmcvqhgdeg2}

Assume that $\alpha_1$, $\alpha_2$, $\beta_1$, $\beta_2$ are mutually distinct and set
\begin{equation*}
y(x)=x^{\mu}\frac{(\alpha_1 x, \alpha_2 x;q)_{\infty}}{(\beta_1 x, \beta_2 x;q)_{\infty}}.
\end{equation*}
The function $y(x)$ satisfies the linear $q$-difference equation $y(qx)=B(x)y(x) $, where
\begin{align}\label{eq:gqxBxgxqJPN2}
\begin{split}
& B(x)=B_{\infty} + \frac{B_1}{1-x/b_1}+ \frac{B_2}{1-x/b_2}, \qquad
b_1=\frac{1}{\alpha_1},\; b_2=\frac{1}{\alpha_2},
\\
& B_{\infty} = q^{\mu}\frac{\beta_1\beta_2}{\alpha_1\alpha_2},\qquad
B_1 = q^{\mu}\frac{(\alpha_1-\beta_1)(\alpha_1-\beta_2)}{\alpha_1(\alpha_1-\alpha_2)},\qquad
B_2 = q^{\mu}\frac{(\alpha_2-\beta_1)(\alpha_2-\beta_2)}{\alpha_2(\alpha_2-\alpha_1)}.
\end{split}
\end{align}
Note that $ B_0=1-B_{\infty}-B_1 -B_2=1-q^{\mu}$.
We apply the convolution $c_{\lambda}$.
Then the matrices $ c_{\lambda} (\textbf{B}) = \textbf{F}=(F_{\infty}; F_1,F_2)$ are written as
\begin{align*}
& F_1 = \begin{pmatrix}
 0 & 0 & 0\\ 1-q^{\mu} & B_1-1+q^{\lambda} & B_2\\ 0 & 0 & 0
 \end{pmatrix}\!,\qquad
F_2 = \begin{pmatrix}
 0 & 0 & 0\\ 0 & 0 & 0\\ 1-q^{\mu} & B_1 & B_2-1+q^{\lambda}
 \end{pmatrix},
 \\
& F_{\infty} = \begin{pmatrix}
 q^{\mu} & -B_1 & -B_2\\ q^{\mu}-1 & 1-B_1 & -B_2\\ q^{\mu}-1 & -B_1 & 1-B_2
 \end{pmatrix}\! .
\end{align*}
The corresponding $q$-difference equation $E_{\mathbf{F},b}$ is written as
\begin{align}
\widehat{Y}(qx)
 &= \biggl(F_{\infty}+\frac{F_1}{1- \alpha_1 x}+\frac{F_2}{1- \alpha_2 x} \biggr) \widehat{Y}(x), \qquad
\widehat{Y}(x) = \begin{pmatrix} \widehat{y}_{0}(x) \\ \widehat{y}_{1}(x) \\ \widehat{y}_{2}(x) \end{pmatrix}\! .
\label{eq:Yqx33}
\end{align}
Theorem~\ref{thm:qcintqJH} states that the $q$-integral representation in equation~\eqref{eq:qcintqJP} satisfies equation~\eqref{eq:Yqx33} under the condition $\mu >0 $ and $\big| q ^{\lambda -\mu} \alpha_1 \alpha_2 / (\beta_1 \beta_2)\big| < 1$.
We can obtain the single $q$-difference equation for $\widehat{y}_{1}(x) $ from equation~\eqref{eq:Yqx33} by a straightforward calculation, and it is written as
\begin{align}
& \big(1 - q^2 x \alpha_1\big)(1 - q x \alpha_2) \widehat{y}_{1}\big(q^3 x\big) - \big\{ q^2 \big(q^{\mu +1} \beta_1 \beta_2 +\alpha_1 \alpha_2 (q+1) \big) x^2 \nonumber
\\
& \qquad{} - \big(q^{\mu +1} (\beta_1 +\beta_2) + \big(q^{\lambda} +1\big) (q \alpha_1+\alpha_2) \big) x + q^{\mu +1} + (q + 1 )q^{\lambda} \big\} \widehat{y}_{1}\big(q^2 x\big) \nonumber
\\
&\qquad {}+ \big\{ q^2 (q^{\mu} (q +1 )\beta_1 \beta_2 +\alpha_1 \alpha_2 ) x^2 -q \big(q^{\mu +1} (\beta_1+\beta_2) \big(q^{\lambda} +1\big) + q^{\lambda} (q \alpha_1+\alpha_2) \big) x \nonumber
\\
& \qquad {}+ q^{\lambda +1} \big( q^{\mu} (q +1) + q^{\lambda} \big) \big\} \widehat{y}_{1}(q x) - q^{\mu +2} \big(\beta_1 x-q^{\lambda}\big) \big(\beta_2 x-q^{\lambda}\big) \widehat{y}_{1}( x) =0.
\label{eq:hy1q3}
\end{align}

The spaces $\mathcal{K} $ and $\mathcal{L} $ were introduced in Definition~\ref{def:qmc} to formulate the $q$-middle convolution.
If $\mu =0$ (resp.~$q^{\lambda} = q^{\mu} \beta_1 \beta_2 /(\alpha_1 \alpha_2) $), then it is shown that $\dim \mathcal{K} =1$ (resp.~$\dim \mathcal{L} =1 $).

\subsubsection{The case \texorpdfstring{\mathversion{bold}$\mu =0$}{mu = 0}} \label{sec:mu0}

We discuss the case $\mu =0$.
In this case, the space $\mathcal{K} $ is spanned by the vector $^t (1,0,0)$.
Since $q^{\mu} =1 $, the equation $E_{\mathbf{F},b}$ is written as
\begin{align*}
\begin{pmatrix} \widehat{y}_{0}(qx) \\ \widehat{y}_{1}(qx) \\ \widehat{y}_{2}(qx) \end{pmatrix}
= \begin{pmatrix}
 1 & -B_1 & -B_2\\ 0 & * & * \\ 0 & * & *
 \end{pmatrix}
\begin{pmatrix} \widehat{y}_{0}(x) \\ \widehat{y}_{1}(x) \\ \widehat{y}_{2}(x) \end{pmatrix}\!,
\end{align*}
where $B_1 = (\alpha_1-\beta_1)(\alpha_1-\beta_2)/ \{ \alpha_1(\alpha_1-\alpha_2) \}$ and $B_2 = (\alpha_2-\beta_1)(\alpha_2-\beta_2)/ \{ \alpha_2(\alpha_2-\alpha_1) \} $, and we have
\begin{align*}
& \begin{pmatrix}
 \widehat{y}_1(qx)\\ \widehat{y}_2(qx)
 \end{pmatrix}
 = \biggl(\overline{F}_{\infty}+\frac{\overline{F}_1}{1- \alpha_1 x}+\frac{\overline{F}_2}{1- \alpha_2 x} \biggr)
 \begin{pmatrix}
 \widehat{y}_1(x)\\ \widehat{y}_2(x)
 \end{pmatrix}\!,
 \\[1mm]
& \overline{F}_1 = \begin{pmatrix}
 B_1-1+q^{\lambda} & B_2\\ 0 & 0
 \end{pmatrix}\!, \qquad
\overline{F}_2 = \begin{pmatrix}
 0 & 0\\ B_1 & B_2-1+q^{\lambda}
 \end{pmatrix}\!, \qquad
 \overline{F}_{\infty} = \begin{pmatrix}
 1-B_1 & -B_2\\ -B_1 & 1-B_2
 \end{pmatrix}\!.
\end{align*}
We may regard the tuple $( \overline{F}_{\infty}; \overline{F}_1,\overline{F}_2)$ as the one obtained by the $q$-middle con\-vo\-lu\-tion $ mc_{\lambda} (\textbf{B}) $.

We apply Proposition~\ref{prop:g1g2single}.
Then a $q$-difference equation for the function $\widehat{y}_1(x) $ is written as
\begin{align}
&\biggl( x-\frac{q^{\lambda+1}}{\beta_1} \biggr) \biggl( x-\frac{q^{\lambda+1}}{\beta_2} \biggr) \widehat{y}_1(x/q) + \frac{\alpha_1\alpha_2}{\beta_1\beta_2} \biggl( x-\frac{1}{\alpha_1} \biggr) \biggl(x-\frac{q}{\alpha_2} \biggr) \widehat{y}_1(qx) \nonumber
\\[1mm]
 &\qquad{}- \biggl[ \biggl( \frac{\alpha_1\alpha_2}{\beta_1\beta_2}\!+1 \biggr) x^2-\biggl\{ q \biggl( \frac{1}{\beta_1}\!+\frac{1}{\beta_2}\biggr)\! +q^{\lambda}\frac{q\alpha_1\!+\alpha_2}{\beta_1\beta_2} \biggr\} x\!+\frac{q^{\lambda+1}(1+q)}{\beta_1\beta_2} \biggr] \widehat{y}_1(x) = 0.
\label{eq:g1eqdeg2k}
\end{align}
Recall that the variant of $q$-hypergeometric equation of degree two was introduced in~\cite{HMST}, and it is written as
\begin{align}\label{eq:varqhgdeg2}
\begin{split}
& \big(x-q^{h_1 +1/2} t_1\big) \big(x - q^{h_2 +1/2} t_2\big) g(x/q)
+ q^{k_1 +k_2} \big(x - q^{l_1-1/2}t_1 \big) \big(x - q^{l_2 -1/2} t_2\big) g(q x)
\\
&\qquad{}-\big[ \big(q^{k_1} +q^{k_2} \big) x^2 +E x + p \big( q^{1/2}+ q^{-1/2}\big) t_1 t_2 \big] g(x) =0,
\\
& p= q^{(h_1 +h_2 + l_1 + l_2 +k_1 +k_2 )/2}, \qquad
E= -p \big\{ \big(q^{- h_2}+q^{-l_2}\big)t_1 + \big(q^{- h_1}+ q^{- l_1}\big) t_2 \big\} .
\end{split}
\end{align}
Equation~\eqref{eq:g1eqdeg2k} is a special case of the variant of $q$-hypergeometric equation of degree two with the constraint $k_2=0$.
However, equation~\eqref{eq:g1eqdeg2k} recovers the variant of $q$-hypergeometric equation of degree two by setting $g (x) = x ^{-k_2} \widehat{y}_1(x) $.
Namely, we obtain the following proposition directly.
\begin{Proposition} \label{prop:paramrel}
Assume that $\widehat{y}_1(x) $ satisfies equation~\eqref{eq:g1eqdeg2k}.
Set $g (x) = x ^{-k_2} \widehat{y}_1(x) $ and
\begin{align}
\label{eq:propparam}
\begin{split}
& \alpha_1 = \frac{q^{- l_1 + 1/2}}{t_1}, \qquad
\alpha_2 = \frac{q^{-l_2 +3/2}}{t_2},\qquad
q^{\lambda}=q^{(h_1+h_2-l_1-l_2-k_1 +k_2 +1)/2}
\\
& \beta_1 = \frac{q^{ (-h_1+h_2-l_1-l_2-k_1 +k_2)/2 +1}}{ t_1}, \qquad
\beta_2 = \frac{q^{(h_1- h_2-l_1-l_2-k_1 +k_2)/2 +1}}{ t_2}.
\end{split}
\end{align}
Then $g(x)$ satisfies the variant of $q$-hypergeometric equation of degree two given in equation~\eqref{eq:varqhgdeg2}.
Note that $\beta_1 = q^{\lambda -h_1 +1/2} /t_1 $ and $\beta_2 = q^{\lambda - h_2 + 1/2} /t_2 $.
\end{Proposition}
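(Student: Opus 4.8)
The plan is to substitute $\widehat{y}_1(x) = x^{k_2} g(x)$ directly into equation~\eqref{eq:g1eqdeg2k} and to check that, after a normalization, the resulting $q$-difference equation for $g$ coincides with the variant of the $q$-hypergeometric equation~\eqref{eq:varqhgdeg2} under the parameter identification~\eqref{eq:propparam}. Since $\widehat{y}_1(x/q) = q^{-k_2} x^{k_2} g(x/q)$ and $\widehat{y}_1(qx) = q^{k_2} x^{k_2} g(qx)$, the factor $x^{k_2}$ is common to all three terms and cancels; multiplying the result by $q^{k_2}$ then brings the $g(x/q)$- and $g(qx)$-terms into the shape demanded by~\eqref{eq:varqhgdeg2}.

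First I would record the elementary consequences of~\eqref{eq:propparam},
\[
\frac{q^{\lambda+1}}{\beta_1} = q^{h_1 + 1/2} t_1, \qquad
\frac{q^{\lambda+1}}{\beta_2} = q^{h_2 + 1/2} t_2, \qquad
\frac{1}{\alpha_1} = q^{l_1 - 1/2} t_1, \qquad
\frac{q}{\alpha_2} = q^{l_2 - 1/2} t_2,
\]
together with the ratio $\alpha_1\alpha_2/(\beta_1\beta_2) = q^{\,h_1+h_2-l_1-l_2+1-2\lambda} = q^{k_1-k_2}$, where the last step uses $2\lambda = h_1+h_2-l_1-l_2-k_1+k_2+1$. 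With these the coefficient of $g(x/q)$ becomes $(x-q^{h_1+1/2}t_1)(x-q^{h_2+1/2}t_2)$ and, after the factor $q^{k_2}$, the coefficient of $g(qx)$ becomes $q^{k_2}q^{k_1-k_2}(x-q^{l_1-1/2}t_1)(x-q^{l_2-1/2}t_2) = q^{k_1+k_2}(x-q^{l_1-1/2}t_1)(x-q^{l_2-1/2}t_2)$, matching~\eqref{eq:varqhgdeg2}.

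It then remains to match the coefficient of $g(x)$, a quadratic in $x$. Its $x^2$-term yields $q^{k_2}(\alpha_1\alpha_2/(\beta_1\beta_2)+1) = q^{k_1}+q^{k_2}$, and its constant term, using $1/(\beta_1\beta_2) = t_1 t_2\, q^{\,h_1+h_2-2\lambda-1}$ and $1+q = q^{1/2}(q^{1/2}+q^{-1/2})$, reduces to $p(q^{1/2}+q^{-1/2})t_1 t_2$; both reductions rest on the single exponent identity $k_2-\lambda+h_1+h_2+1/2 = (h_1+h_2+l_1+l_2+k_1+k_2)/2$, the right-hand side being exactly the exponent defining $p$, and this identity is equivalent to the definition of $\lambda$.

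The main obstacle is the $x$-coefficient, i.e.\ the term $E$. Here I would compute $q(\beta_1^{-1}+\beta_2^{-1}) = q^{-\lambda+h_1+1/2}t_1 + q^{-\lambda+h_2+1/2}t_2$ and $q^{\lambda}(q\alpha_1+\alpha_2)/(\beta_1\beta_2) = q^{-\lambda+h_1+h_2-l_2+1/2}t_1 + q^{-\lambda+h_1+h_2-l_1+1/2}t_2$, then regroup by $t_1$ and $t_2$ to obtain the factors $q^{-\lambda+h_1+1/2}(1+q^{h_2-l_2})t_1$ and $q^{-\lambda+h_2+1/2}(1+q^{h_1-l_1})t_2$. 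Comparing with $E = -p\{(q^{-h_2}+q^{-l_2})t_1+(q^{-h_1}+q^{-l_1})t_2\}$ exploits the factorizations $q^{-h_2}+q^{-l_2} = q^{-h_2}(1+q^{h_2-l_2})$ and $q^{-h_1}+q^{-l_1} = q^{-h_1}(1+q^{h_1-l_1})$; cancelling these common binomials collapses both the $t_1$- and $t_2$-conditions to the same exponent identity $k_2-\lambda+h_1+h_2+1/2 = (h_1+h_2+l_1+l_2+k_1+k_2)/2$ already used above. Thus the entire verification hinges on one identity forced by the definition of $\lambda$, and no additional hypotheses are required.
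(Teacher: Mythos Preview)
Your proposal is correct and follows exactly the approach the paper intends: the paper merely states that the proposition is obtained ``directly'' by substituting $g(x)=x^{-k_2}\widehat{y}_1(x)$ and the parameter identifications~\eqref{eq:propparam} into equation~\eqref{eq:g1eqdeg2k}, without giving any details. Your write-up carries out precisely this verification, and the computations (in particular the reduction of all three coefficients of the quadratic in $x$ to the single exponent identity equivalent to the definition of $\lambda$) are accurate.
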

Similarly, a $q$-difference equation for the function $\widehat{y}_2(x) $ is written as{\samepage
\begin{align*}
&\biggl( x-\frac{q^{\lambda+1}}{\beta_1} \biggr) \biggl( x-\frac{q^{\lambda+1}}{\beta_2} \biggr)\widehat{y}_2(x/q) + \frac{\alpha_1\alpha_2}{\beta_1\beta_2} \biggl( x-\frac{q}{\alpha_1} \biggr) \biggl( x-\frac{1}{\alpha_2} \biggr) \widehat{y}_2(qx)
\\[1mm]
&\qquad{}- \biggl[\biggl( \frac{\alpha_1\alpha_2}{\beta_1\beta_2}+1 \biggr) x^2- \biggl\{ q \biggl( \frac{1}{\beta_1}+\frac{1}{\beta_2} \biggr) +q^{\lambda}\frac{\alpha_1+q\alpha_2}{\beta_1\beta_2} \biggr\} x+\frac{q^{\lambda+1}(1+q)}{\beta_1\beta_2} \biggr] \widehat{y}_2(x) = 0,
\end{align*}
and it is also a special case of the variant of $q$-hypergeometric equation of degree two.}

We obtained the formal $q$-difference equation for $\widehat{y}_1(x) $ corresponding to the $q$-middle convolution for the tuples of the matrices in equation~\eqref{eq:g1eqdeg2k}.
We now discuss the convergence of the $q$-integral representations obtained by the $q$-middle convolution and the actual $q$-difference equations which the $q$-integral representation satisfies.
In the case $\mu =0$, the assumptions of Corollary~\ref{cor:qcint} and Theorem~\ref{thm:qcint} do not hold true.
Therefore, we go back to Proposition~\ref{prop:convKL}.
We apply Proposition~\ref{prop:convKL} for the function $y(x)= x^{\mu} (\alpha_1 x, \alpha_2 x;q)_{\infty}/(\beta_1 x, \beta_2 x;q)_{\infty}$ with the condition $\mu =0$.
Then the functions
\begin{align*}
& \widehat{y}_{i}^{[K,L]}(x) = (1-q) \sum_{n=K}^{L} s \frac{P_{\lambda}(x, s)}{s-b_{i}} \frac{(\alpha_1 s, \alpha_2 s;q)_{\infty}}{(\beta_1 s, \beta_2 s;q)_{\infty}} |_{s= q^n \xi}, \qquad
i=0,1,2,
\\
& b_0=0, \qquad b_1 = 1/\alpha_1, \qquad b_2 = 1/\alpha_2
\end{align*}
satisfy
\begin{align*}
& \widehat{y}_0^{\,[K,L]}(qx) = \widehat{y}_0^{\,[K,L]}(x) - B_1\widehat{y}_1^{\,[K,L]}(x) - B_2\widehat{y}_2^{\,[K,L]}(x) + (1-q)Q^{[K,L]}(x), \nonumber\\
& \widehat{y}_1^{\,[K,L]}(qx) = \frac{\alpha_1(B_1-1)x+q^{\lambda}}{1-\alpha_1 x}\widehat{y}_1^{\,[K,L]}(x) + \frac{\alpha_1 B_2 x}{1-\alpha_1 x}\widehat{y}_2^{\,[K,L]}(x) - \frac{(1-q)\alpha_1 x}{1-\alpha_1 x}Q^{[K,L]}(x), \nonumber\\
& \widehat{y}_2^{\,[K,L]}(qx) = \frac{\alpha_2 B_1 x}{1-\alpha_2 x}\widehat{y}_1^{\,[K,L]}(x) + \frac{\alpha_2(B_2-1)x+q^{\lambda}}{1-\alpha_2 x}\widehat{y}_2^{\,[K,L]}(x) - \frac{(1-q)\alpha_2 x}{1-\alpha_2 x}Q^{[K,L]}(x),
\end{align*}
where
\begin{equation*}
Q^{[K,L]}(x)=P_{\lambda}\big(x, q^{K-1} \xi\big) y \big(q^{K} \xi\big) - P_{\lambda}\big(x, q^{L} \xi\big) y \big(q^{L+1} \xi\big).
\end{equation*}
Note that the convergence theorem in Theorem~\ref{thm:qcintqJH} for the functions $\widehat{y}_i^{\,[K,L]}(x)$, $i=0,1,2$, is not applicable, because of the condition $\mu =0$.
We apply Proposition~\ref{prop:g1g2single} to obtain the $q$-difference equation which the function $\widehat{y}_1^{\,[K,L]}(x) $ satisfies.
Then we have
\begin{align}
&\biggl( x-\frac{q^{\lambda+1}}{\beta_1} \biggr) \biggl( x-\frac{q^{\lambda+1}}{\beta_2} \biggr) \widehat{y}_1^{\,[K,L]}(x/q)
+ \frac{\alpha_1\alpha_2}{\beta_1\beta_2} \biggl( x-\frac{1}{\alpha_1} \biggr) \biggl( x-\frac{q}{\alpha_2} \biggr) \widehat{y}_1^{\,[K,L]}(qx) \nonumber
\\[1mm]
 &\qquad{}- \biggl[ \biggl( \frac{\alpha_1\alpha_2}{\beta_1\beta_2}+1 \biggr) x^2- \biggl\{ q \biggl( \frac{1}{\beta_1}+\frac{1}{\beta_2} \biggr) +q^{\lambda}\frac{q\alpha_1+\alpha_2}{\beta_1\beta_2} \biggr\} x+\frac{q^{\lambda+1}(1+q)}{\beta_1\beta_2} \biggr] \widehat{y}_1^{\,[K,L]}(x) \nonumber
 \\[1mm]
 &\qquad{}+ (1-q)q\frac{\alpha_1}{\beta_1\beta_2}x \biggl\{ \biggl( \frac{\alpha_2}{q}x-q^{\lambda} \biggr) Q^{[K,L]}(x/q)+ \biggl( 1-\frac{\alpha_2}{q}x \biggr) Q^{[K,L]}(x) \biggr\} = 0,
\label{eq:g1KL}
\end{align}
and the non-homogeneous term is written as
\begin{align}
& (1-q)q\frac{\alpha_1}{\beta_1\beta_2}x \biggl\{ \biggl( \frac{\alpha_2}{q}x-q^{\lambda} \biggr) Q^{[K,L]}(x/q)+ \biggl( 1-\frac{\alpha_2}{q}x \biggr) Q^{[K,L]}(x) \biggr\} \nonumber
\\
&\quad= q(1-q)\big(1-q^{\lambda}\big)\frac{\alpha_1}{\beta_1\beta_2}x \nonumber
\\
&\quad\phantom{=} \times \biggl( \frac{\big(q^{\lambda+K+1}\xi/x, q^K\xi\alpha_1, q^{K-1}\xi\alpha_2 ;q\big)_{\infty}}{\big(q^K\xi/x, q^K\xi\beta_1, q^K\xi\beta_2 ;q\big)_{\infty}} - \frac{\big(q^{\lambda+L+2}\xi/x, q^{L+1}\xi\alpha_1, q^L\xi\alpha_2 ;q\big)_{\infty}}{\big(q^{L+1}\xi/x, q^{L+1}\xi\beta_1, q^{L+1}\xi\beta_2 ;q\big)_{\infty}} \biggr).
\label{eq:nonhomKL2}
\end{align}
The function $\widehat{y}_1^{\,[K,L]}(x)$ is written as
\begin{align*}
 \widehat{y}_1^{\,[K,L]}(x) = (q-1)\alpha_1\sum_{n=K}^{L}c_n, \qquad
 c_n = q^n\xi\frac{\big(q^{\lambda+n+1}\xi / x, q^{n+1} \xi\alpha_1, q^n \xi\alpha_2 ; q\big)_{\infty}}{\big(q^{n+1}\xi / x, q^n \xi\beta_1, q^n \xi\beta_2 ; q\big)_{\infty}} .
\end{align*}
Since $c_{n+1}/c_n \to q$, $n\to +\infty$, and $c_{-(n+1)}/c_{-n} \to q^{\lambda} \alpha_1 \alpha_2 /(\beta_1 \beta_2)$, $n\to +\infty$, the function $\widehat{y}_1^{\,[K,L]}(x) $ converges as $K \to -\infty $ and $L \to +\infty $ if $\big|q^{\lambda} \alpha_1 \alpha_2 /(\beta_1 \beta_2 )\big| <1 $.
Write
\begin{equation}
\widehat{y}_1 (x)= \lim_{K \to -\infty \atop{ L \to +\infty}} \widehat{y}_1^{\,[K,L]}(x) = (q-1)\alpha_1\sum_{n=-\infty}^{+\infty} q^n\xi\frac{\big(q^{\lambda+n+1}\xi / x, q^{n+1} \xi\alpha_1, q^n \xi\alpha_2 ; q\big)_{\infty}}{\big(q^{n+1}\xi / x, q^n \xi\beta_1, q^n \xi\beta_2 ; q\big)_{\infty}} . \label{eq:hy1xivqhg2}
\end{equation}

We investigate the limit of the non-homogeneous term in equation~\eqref{eq:nonhomKL2} as $K \to -\infty$ and $L \to +\infty$.
We have
\begin{equation*}
\lim_{L \to +\infty} \frac{\big(q^{\lambda+L+2}\xi/x, q^{L+1}\xi\alpha_1, q^L\xi\alpha_2 ;q\big)_{\infty}}{\big(q^{L+1}\xi/x, q^{L+1}\xi\beta_1, q^{L+1}\xi\beta_2 ;q\big)_{\infty}} =1.
\end{equation*}
Set $\vartheta_q (t) = (t, q/t, q ;q)_{\infty}$.
It follows from the identity $\vartheta_q (q a x)/ \vartheta_q (q b x) = (b/a) \vartheta_q (a x)/ \vartheta_q (b x) $ that
\begin{align}
& \frac{\big(q^{\lambda+K+1}\xi/x, q^K\xi\alpha_1, q^{K-1}\xi\alpha_2 ;q\big)_{\infty}}{\big(q^K\xi/x, q^K\xi\beta_1, q^K\xi\beta_2 ;q\big)_{\infty}} \nonumber
\\
&\qquad{} = \frac{\vartheta_q\big(q^{\lambda+K+1}\xi/x\big) \vartheta_q\big(q^K\xi\alpha_1\big) \vartheta_q\big(q^{K-1}\xi\alpha_2\big)}{\vartheta_q\big(q^K\xi/x\big) \vartheta_q\big(q^K\xi\beta_1\big) \vartheta_q\big(q^K\xi\beta_2\big)} \frac{\big(q^{1-K}x/\xi, q^{1-K}/(\xi\beta_1), q^{1-K}/(\xi\beta_2) ;q\big)_{\infty}}{\big(q^{-\lambda-K}\xi/x, q^{1-K}/(\xi\alpha_1), q^{2-K}/(\xi\alpha_2) ;q\big)_{\infty}}\! \nonumber \\
&\qquad{}= \biggl( q^{\lambda}\frac{\alpha_1\alpha_2}{\beta_1\beta_2} \biggr)^{-K} \frac{\vartheta_q\big(q^{\lambda+1}\xi/x\big) \vartheta_q(\xi\alpha_1) \vartheta_q\big(q^{-1}\xi\alpha_2\big)}{\vartheta_q(\xi/x) \vartheta_q(\xi\beta_1) \vartheta_q(\xi\beta_2)} \nonumber
\\
&\qquad\phantom{=} \times \frac{\big(q^{1-K}x/\xi, q^{1-K}/(\xi\beta_1), q^{1-K}/(\xi\beta_2) ;q\big)_{\infty}}{\big(q^{-\lambda-K}\xi/x, q^{1-K}/(\xi\alpha_1), q^{2-K}/(\xi\alpha_2) ;q\big)_{\infty}}.
\label{eq:thetaq}
\end{align}
Therefore, if $\big| q^{\lambda} \alpha_1 \alpha_2 /( \beta_1 \beta_2 ) \big| <1$, then equation~\eqref{eq:nonhomKL2} tends to
\begin{equation}
-q(1-q)\big(1-q^{\lambda}\big)\frac{\alpha_1}{\beta_1\beta_2}x \label{eq:nonhomlimKL2}
\end{equation}
as $K \to -\infty $ and $L \to +\infty $.
Hence we obtain the following proposition.

\begin{Proposition} \label{prop:y1nonhom}
If $| q^{\lambda} \alpha_1 \alpha_2 /( \beta_1 \beta_2 ) | <1$, the function $\widehat{y}_1(x) $ in equation~\eqref{eq:hy1xivqhg2} satisfies
\begin{align}
&\biggl( x-\frac{q^{\lambda+1}}{\beta_1} \biggr) \biggl( x-\frac{q^{\lambda+1}}{\beta_2} \biggr) \widehat{y}_1(x/q) + \frac{\alpha_1\alpha_2}{\beta_1\beta_2} \biggl( x-\frac{1}{\alpha_1} \biggr) \biggl( x-\frac{q}{\alpha_2} \biggr) \widehat{y}_1(qx) \nonumber
\\
 &\qquad{}- \biggl[ \biggl( \frac{\alpha_1\alpha_2}{\beta_1\beta_2}+1 \biggr) x^2- \biggl\{ q \biggl( \frac{1}{\beta_1}+\frac{1}{\beta_2} \biggr) +q^{\lambda}\frac{q\alpha_1+\alpha_2}{\beta_1\beta_2} \biggr\} x+\frac{q^{\lambda+1}(1+q)}{\beta_1\beta_2} \biggr] \widehat{y}_1(x) \nonumber
 \\
 &\qquad{}-q(1-q)\big(1-q^{\lambda}\big)\frac{\alpha_1}{\beta_1\beta_2}x = 0 .
\label{eq:y1nonhom}
\end{align}
\end{Proposition}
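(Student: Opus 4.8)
The plan is to derive~\eqref{eq:y1nonhom} by letting $K\to-\infty$ and $L\to+\infty$ in the inhomogeneous $q$-difference equation~\eqref{eq:g1KL}, which the partial sum $\widehat{y}_1^{\,[K,L]}(x)$ satisfies (obtained by applying Proposition~\ref{prop:g1g2single} to the truncated system). The homogeneous part of~\eqref{eq:g1KL} is a fixed $x$-polynomial combination of $\widehat{y}_1^{\,[K,L]}$ at the three arguments $x/q$, $x$, $qx$, so I would first record convergence of each of these. Writing $\widehat{y}_1^{\,[K,L]}(x)=(q-1)\alpha_1\sum_{n=K}^{L}c_n$ as in the text, the ratios satisfy $c_{n+1}/c_n\to q$ as $n\to+\infty$ and $c_{-(n+1)}/c_{-n}\to q^{\lambda}\alpha_1\alpha_2/(\beta_1\beta_2)$ as $n\to+\infty$; since $|q|<1$ and, by hypothesis, $|q^{\lambda}\alpha_1\alpha_2/(\beta_1\beta_2)|<1$, both tails converge absolutely, and the same holds with $x$ replaced by $x/q$ or $qx$. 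Hence the first three terms of~\eqref{eq:g1KL} converge to the corresponding three terms of~\eqref{eq:y1nonhom}.

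It then remains to evaluate the limit of the non-homogeneous term~\eqref{eq:nonhomKL2}, which is the prefactor $q(1-q)(1-q^{\lambda})\alpha_1 x/(\beta_1\beta_2)$ times the difference of a $K$-indexed and an $L$-indexed ratio of $q$-Pochhammer symbols. For the $L$-indexed ratio I would observe that every argument $q^{L+1}\xi/x$, $q^{L+1}\xi\beta_i$, $q^{\lambda+L+2}\xi/x$, $q^{L+1}\xi\alpha_1$, $q^{L}\xi\alpha_2$ tends to $0$ as $L\to+\infty$, so each infinite product tends to $(0;q)_{\infty}=1$ and the whole ratio tends to $1$, as recorded just before the proposition.

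The delicate part, and the main obstacle, is the $K$-indexed ratio: as $K\to-\infty$ the arguments $q^{K}\xi\alpha_i$ and $q^{\lambda+K+1}\xi/x$ blow up, so the individual $q$-Pochhammer factors do not converge, and a naive term-by-term limit is unavailable. Here I would invoke the theta-function reflection computed in~\eqref{eq:thetaq}, which rewrites the $K$-indexed ratio as the explicit geometric factor $(q^{\lambda}\alpha_1\alpha_2/(\beta_1\beta_2))^{-K}$ times a fixed theta quotient times a ratio of $q$-Pochhammer symbols whose arguments $q^{1-K}x/\xi$, $q^{1-K}/(\xi\beta_i)$, $q^{-\lambda-K}\xi/x$, $q^{1-K}/(\xi\alpha_i)$ all tend to $0$ and hence give convergent products tending to $1$. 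Under the hypothesis $|q^{\lambda}\alpha_1\alpha_2/(\beta_1\beta_2)|<1$, the geometric factor raised to the power $-K\to+\infty$ tends to $0$, so the entire $K$-indexed ratio vanishes in the limit. Combining the two boundary evaluations, the bracketed difference in~\eqref{eq:nonhomKL2} tends to $0-1=-1$, so the non-homogeneous term tends to~\eqref{eq:nonhomlimKL2}, namely $-q(1-q)(1-q^{\lambda})\alpha_1 x/(\beta_1\beta_2)$. Substituting the three homogeneous limits together with this boundary limit into~\eqref{eq:g1KL} yields exactly~\eqref{eq:y1nonhom}, which completes the argument.
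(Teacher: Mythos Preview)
Your proposal is correct and follows essentially the same route as the paper: you pass to the limit in the truncated identity~\eqref{eq:g1KL}, use the ratio test on the $c_n$ to secure convergence of the homogeneous part, evaluate the $L$-boundary term directly as~$1$, and handle the $K$-boundary term via the theta rewriting~\eqref{eq:thetaq} to extract the geometric factor $(q^{\lambda}\alpha_1\alpha_2/(\beta_1\beta_2))^{-K}\to 0$, arriving at the limiting non-homogeneous term~\eqref{eq:nonhomlimKL2}. This is precisely the argument given in the text preceding the proposition.
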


Equation~\eqref{eq:y1nonhom} is an non-homogeneous extension of equation~\eqref{eq:g1eqdeg2k}.
We investigate a relationship with the third order difference equation in equation~\eqref{eq:hy1q3}.
Let $T_x$ be the operator such that $T_x f(x) = f(qx)$.
If $q^{\mu} =1$, then equation~\eqref{eq:hy1q3} is factorized as
\begin{align*}
& (T_x -q) \big[ (1 - q x \alpha_1) (1 - x \alpha_2) T_x^2 \nonumber
\\
& \qquad {} - \big\{ q ( \beta_1 \beta_2 +\alpha_1 \alpha_2 ) x^2 - (q(\beta_1+\beta_2) + q^{\lambda} (q \alpha_1+\alpha_2) ) x + q^{\lambda} (q + 1) \big\} T_x \nonumber
\\
& \qquad {} + q \big(\beta_1 x-q^{\lambda}\big) \big(\beta_2 x-q^{\lambda}\big) \big] \widehat{y}_{1}( x) =0.
\end{align*}
It follows from the relation $(T_x -q )x =0 $ that, if $\widehat{y}_1(x) $ satisfies equation~\eqref{eq:y1nonhom}, then it also satisfies equation~\eqref{eq:hy1q3} with the condition $q^{\mu} =1 $.

If $\xi= 1 /\alpha_1$, $\xi= 1 /\alpha_2 $ or $\xi=q^{-\lambda} x$, then
\begin{align*}
& \frac{\big(q^{\lambda+K+1}\xi/x, q^K\xi\alpha_1, q^{K-1}\xi\alpha_2 ;q\big)_{\infty}}{\big(q^K\xi/x, q^K\xi\beta_1, q^K\xi\beta_2 ;q\big)_{\infty}} =0
\end{align*}
for any negative integer $K$, and equation~\eqref{eq:nonhomKL2} tends to equation~\eqref{eq:nonhomlimKL2} as $K \to -\infty $ and $L \to +\infty $ without the condition for $q^{\lambda} \alpha_1 \alpha_2 /(\beta_1 \beta_2 )$.

We substitute $\xi= 1 /\alpha_1$, $\xi= 1 /\alpha_2 $ or $\xi=q^{-\lambda} x$ in $\widehat{y}_1(x) $.
If $\xi= 1 /\alpha_1 $, then it follows from a~similar calculation to equation~\eqref{eq:hy0sum} that
\begin{align}
\widehat{y}_1(x) &= (q-1)\alpha_1 \sum_{n=0}^{\infty} \frac{q^n}{\alpha_1} \frac{\big(q^{\lambda+n+1} /(\alpha_1 x ), q^{n+1}, q^n \alpha_2 /\alpha_1 ; q\big)_{\infty}}{\big(q^{n+1}/ (\alpha_1 x ), q^n \beta_1 /\alpha_1, q^n \beta_2 /\alpha_1 ; q\big)_{\infty}} \nonumber
\\
& = (q-1)\frac{\big(q^{\lambda+1}/(\alpha_1 x), \alpha_2/\alpha_1, q;q\big)_{\infty}}{\big(q/(\alpha_1 x), \beta_1/\alpha_1, \beta_2/\alpha_1 ;q\big)_{\infty}} \,_3\phi_2 \biggl(\!\! \begin{array}{c} q/(\alpha_1 x), \beta_1/\alpha_1, \beta_2/\alpha_1\\ q^{\lambda+1}/(\alpha_1 x), \alpha_2/\alpha_1 \end{array}\! ;q,q \biggr) .
\label{eq:xi1al1}
\end{align}
Note that the condition for $q^{\lambda} \alpha_1 \alpha_2 /(\beta_1 \beta_2 )$ is not necessary.
If $\xi= 1 /\alpha_2 $, then
\begin{align}
 \widehat{y}_1(x)={}& (q-1)q\frac{\alpha_1}{\alpha_2}\frac{\big(q^{\lambda+2}/(\alpha_2 x), q^2\alpha_1/\alpha_2, q ;q\big)_{\infty}}{\big(q^2/(\alpha_2 x), q\beta_1/\alpha_2, q\beta_2/\alpha_2 ;q\big)_{\infty}} \nonumber
 \\
& \times {}_3\phi_2 \biggl(\!\! \begin{array}{c} q^2/(\alpha_2 x), q\beta_1/\alpha_2, q\beta_2/\alpha_2\\
 q^{\lambda+2}/(\alpha_2 x), q^2\alpha_1/\alpha_2 \end{array}\! ;q,q \biggr).
\label{eq:xi1al2}
\end{align}
If $\xi=q^{-\lambda}x$, then
\begin{align}
 &\widehat{y}_1(x)\!= (q\!-\!1)q^{-\lambda}\alpha_1 x\frac{\big(q^{1-\lambda}\alpha_1 x, q^{-\lambda}\alpha_2 x, q ;q\big)_{\infty}}{\big( q^{-\lambda}\beta_1 x, q^{-\lambda}\beta_2 x, q^{1 -\lambda} ;q\big)_{\infty}} \,{}_3\phi_2 \biggl(\!\! \begin{array}{c} q^{-\lambda}\beta_1 x, q^{-\lambda}\beta_2 x, q^{1-\lambda} \\
 q^{1 -\lambda}\alpha_1 x, q^{-\lambda}\alpha_2 x \end{array}\!\! ;q,q \biggr) .\!\!\!\!
\label{eq:xiqlax}
\end{align}
By Proposition~\ref{prop:y1nonhom}, we obtain the following assertion.
\begin{Proposition}
The functions in equations~\eqref{eq:xi1al1}, \eqref{eq:xi1al2}, \eqref{eq:xiqlax} satisfy equation~\eqref{eq:y1nonhom}.
\end{Proposition}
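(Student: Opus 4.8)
The plan is to observe that the three functions displayed in equations~\eqref{eq:xi1al1}, \eqref{eq:xi1al2} and~\eqref{eq:xiqlax} are nothing but the specializations of the series $\widehat{y}_1(x)$ in equation~\eqref{eq:hy1xivqhg2} at $\xi = 1/\alpha_1$, $\xi = 1/\alpha_2$ and $\xi = q^{-\lambda}x$, respectively, and then to rerun the limiting argument behind Proposition~\ref{prop:y1nonhom}. The essential point, which is what removes the hypothesis $|q^{\lambda}\alpha_1\alpha_2/(\beta_1\beta_2)| < 1$ in these three cases, is the vanishing observation recorded just before the statement.

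First I would recall that, for arbitrary $\xi$, the truncated function $\widehat{y}_1^{\,[K,L]}(x)$ satisfies the non-homogeneous equation~\eqref{eq:g1KL}, obtained by applying Proposition~\ref{prop:g1g2single} to the finite-sum system satisfied by $\widehat{y}_0^{\,[K,L]}$, $\widehat{y}_1^{\,[K,L]}$ and $\widehat{y}_2^{\,[K,L]}$; this derivation is purely algebraic and uses no convergence condition. I would then pass to the limit $K \to -\infty$, $L \to +\infty$ in~\eqref{eq:g1KL} for each of the three values of $\xi$.

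The homogeneous part converges because, for each of these $\xi$, the bilateral series defining $\widehat{y}_1(x)$ terminates from below and reduces to the convergent unilateral ${}_3\phi_2$ written in~\eqref{eq:xi1al1}, \eqref{eq:xi1al2} or~\eqref{eq:xiqlax}, so no restriction on $q^{\lambda}\alpha_1\alpha_2/(\beta_1\beta_2)$ is required. For the non-homogeneous term~\eqref{eq:nonhomKL2} I would invoke the vanishing identity stated before the proposition: for $\xi = 1/\alpha_1$ the factor $q^{K}\xi\alpha_1 = q^{K}$, for $\xi = 1/\alpha_2$ the factor $q^{K-1}\xi\alpha_2 = q^{K-1}$, and for $\xi = q^{-\lambda}x$ the factor $q^{\lambda+K+1}\xi/x = q^{K+1}$ each force the corresponding $(\,\cdot\,;q)_{\infty}$ to be $0$ for every negative integer $K$, so the $K$-dependent part of~\eqref{eq:nonhomKL2} is identically zero and drops out of the limit. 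Combined with the already computed fact that the $L$-dependent factor tends to $1$ as $L \to +\infty$, this shows that~\eqref{eq:nonhomKL2} tends to~\eqref{eq:nonhomlimKL2}, namely to $-q(1-q)\big(1-q^{\lambda}\big)\alpha_1 x/(\beta_1\beta_2)$, without the convergence hypothesis. Taking both limits in~\eqref{eq:g1KL} then yields equation~\eqref{eq:y1nonhom} for each of the three functions.

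The main obstacle is not computational but conceptual: one must check that the three specializations genuinely circumvent the convergence condition of Proposition~\ref{prop:y1nonhom}. This reduces to verifying the vanishing of the lower-end boundary contribution separately in each case, which is exactly the bookkeeping of which $q$-Pochhammer factor collapses for negative $K$; once that is in place, the limit of~\eqref{eq:g1KL} is forced and the assertion follows.
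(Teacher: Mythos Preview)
Your proposal is correct and follows essentially the same route as the paper: the paper notes just before the proposition that for $\xi=1/\alpha_1$, $\xi=1/\alpha_2$, $\xi=q^{-\lambda}x$ the $K$-boundary term in~\eqref{eq:nonhomKL2} vanishes identically for every negative $K$, so that~\eqref{eq:nonhomKL2} tends to~\eqref{eq:nonhomlimKL2} without the hypothesis $|q^{\lambda}\alpha_1\alpha_2/(\beta_1\beta_2)|<1$, and then invokes the limiting argument of Proposition~\ref{prop:y1nonhom}. Your account makes explicit which $q$-Pochhammer factor collapses in each of the three cases and why the bilateral series for $\widehat{y}_1$ truncates to the displayed unilateral ${}_3\phi_2$'s, but the underlying argument is identical.
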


To obtain results corresponding to the specialization $\xi= 1/\beta_1$, $\xi= 1/\beta_2$ and $\xi= x $, we replace the functions with
\begin{align}
& P_{\lambda}(x, s) = (x/s)^{\lambda} \frac{(x/s;q)_{\infty}}{\big(q^{-\lambda} x/s ;q\big)_{\infty}}, \qquad y(x)=x^{\mu '} \frac{(q/(\beta_1 x), q/(\beta_2 x) ;q)_{\infty}}{(q/(\alpha_1 x), q/(\alpha_2 x) ;q)_{\infty}} \label{eq:Ply}
\end{align}
with the condition $ q^{\mu '} \alpha_1 \alpha_2 /(\beta_1 \beta_2 ) =1$.
Then the function $y(x) $ also satisfies the $q$-difference equation $y(qx)=B(x)y(x) $, where $B(x)$ is given as equation~\eqref{eq:gqxBxgxqJPN2} with the condition $\mu =0$.
The function $\widehat{y}_1^{\,[K,L]}(x) $ is written as
\begin{align*}
 \widehat{y}_1^{\,[K,L]}(x) = (1-q) x^{\lambda} \sum_{n=K}^{L} (q^n\xi )^{\mu' -\lambda} \frac{\big(x q^{-n} / \xi, q^{1-n} /( \beta_1 \xi ), q^{1-n} /( \beta_2 \xi ) ; q\big)_{\infty}}{\big(x q^{-n-\lambda} / \xi, q^{-n} / (\alpha_1 \xi),q^{1-n} / (\alpha_2 \xi) ; q\big)_{\infty}},
\end{align*}
and it converges as $K \to -\infty $ and $L \to +\infty $ if $\big|q^{\lambda} \alpha_1 \alpha_2 /(\beta_1 \beta_2 )\big|<1 $.
Write the limit by $\widehat{y}_1 (x) $.
Note that the function $\widehat{y}_1^{\,[K,L]}(x)$ satisfies equation~\eqref{eq:g1KL}, where
\begin{align}
 Q^{[K,L]}(x)&=P_{\lambda}\big(x, q^{K-1} \xi\big) y\big(q^{K} \xi\big) - P_{\lambda}\big(x, q^{L} \xi\big) y\big(q^{L+1} \xi\big) \nonumber\\
 &= (q x )^{\lambda} \biggl\{ \big(q^{K} \xi\big)^{\mu ' -\lambda} \frac{\big(x q^{1-K}/\xi, q^{1-K}/(\beta_1 \xi ), q^{1-K}/(\beta_2 \xi ) ;q\big)_{\infty}}{\big(x q^{-\lambda +1-K} / \xi, q^{1-K}/(\alpha_1 \xi ), q^{1-K}/(\alpha_2 \xi ) ;q\big)_{\infty}} \nonumber \\
& \hphantom{= (q x )^{\lambda} \biggl\{} - \big(q^{L+1} \xi\big)^{\mu ' -\lambda} \frac{\big(x q^{-L}/\xi, q^{-L}/(\beta_1 \xi ), q^{-L}/(\beta_2 \xi ) ;q\big)_{\infty}}{\big(x q^{-\lambda -L} / \xi, q^{-L}/(\alpha_1 \xi ), q^{-L}/(\alpha_2 \xi ) ;q\big)_{\infty}} \biggr\} .
\label{eq:QKL}
\end{align}
The non-homogeneous term in equation~\eqref{eq:g1KL} is written as
\begin{align}
& (1-q)q\frac{\alpha_1}{\beta_1\beta_2}x \biggl\{ \biggl( \frac{\alpha_2}{q}x-q^{\lambda} \biggr) Q^{[K,L]}(x/q)+ \biggl( 1-\frac{\alpha_2}{q}x \biggr) Q^{[K,L]}(x) \biggr\}\nonumber \\
&\qquad= (1-q)\big(1-q^{\lambda}\big)\frac{\alpha_1 \alpha_2}{\beta_1\beta_2}x^{\lambda +2} \biggl( (q^{K} \xi)^{\mu ' -\lambda} \frac{(x q^{1-K}/\xi, q^{1-K}/(\beta_1 \xi ), q^{1-K}/(\beta_2 \xi ) ;q)_{\infty}}{(x q^{-\lambda -K} / \xi, q^{1-K}/(\alpha_1 \xi ), q^{2-K}/(\alpha_2 \xi ) ;q)_{\infty}} \nonumber \\
&\qquad\phantom{=} - \big(q^{L+1} \xi\big)^{\mu ' -\lambda} \frac{\big(x q^{-L}/\xi, q^{-L}/(\beta_1 \xi ), q^{-L}/(\beta_2 \xi ) ;q\big)_{\infty}}{\big(x q^{-\lambda -1-L} / \xi, q^{-L}/(\alpha_1 \xi ), q^{1-L}/(\alpha_2 \xi ) ;q\big)_{\infty}} \biggr).
\label{eq:nonhomKL22}
\end{align}
We investigate the limit of equation~\eqref{eq:nonhomKL22} as $K \to -\infty $ and $L \to +\infty $.
We have
\begin{align*}
& \lim_{K \to -\infty} \big(q^{K} \xi\big)^{\mu ' -\lambda} \frac{\big(x q^{1-K}/\xi, q^{1-K}/(\beta_1 \xi ), q^{1-K}/(\beta_2 \xi ) ;q\big)_{\infty}}{\big(x q^{-\lambda -K} / \xi, q^{1-K}/(\alpha_1 \xi ), q^{2-K}/(\alpha_2 \xi ) ;q\big)_{\infty}}
\\
& \qquad{}= \lim_{K \to -\infty} \xi ^{\mu ' -\lambda} \biggl( \frac{\beta_1 \beta_2}{\alpha_1 \alpha_2} q^{ -\lambda} \biggr)^K =0,
\end{align*}
under the condition $\big| q^{\lambda} \alpha_1 \alpha_2 /(\beta_1 \beta_2 ) \big| <1$.
If $\xi=1 /\beta_1$, $\xi=1/ \beta_2$ or $\xi= x$, then
\begin{align*}
& \frac{\big(x q^{-L}/\xi, q^{-L}/(\beta_1 \xi ), q^{-L}/(\beta_2 \xi ) ;q\big)_{\infty}}{\big(x q^{-\lambda -1-L} / \xi, q^{-L}/(\alpha_1 \xi ), q^{1-L}/(\alpha_2 \xi ) ;q\big)_{\infty}} =0
\end{align*}
for any positive integer $L$, and equation~\eqref{eq:nonhomKL22} tends to $0$ as $K \to -\infty $ and $L \to +\infty $ under the condition $\big| q^{\lambda} \alpha_1 \alpha_2 /(\beta_1 \beta_2 ) \big| <1 $.

If $\xi= 1 /\beta_1$, then
\begin{align}
 \widehat{y}_1(x) & = (1-q) x^{\lambda} \sum_{n=-\infty}^{0} (q^n/ \beta_1)^{\mu' -\lambda} \frac{\big(x \beta_1 q^{-n}, q^{1-n}, q^{1-n} \beta_1 / \beta_2 ; q\big)_{\infty}}{\big(x q^{-n-\lambda} \beta_1, q^{-n} \beta_1 / \alpha_1,q^{1-n} \beta_1 / \alpha_2 ; q\big)_{\infty}} \nonumber
 \\
& = (1-q) \beta_1^{\lambda -\mu '} x^{\lambda} \frac{( \beta_1 x, q, q \beta_1 / \beta_2 ; q)_{\infty}}{\big( q^{-\lambda} x \beta_1, \beta_1 / \alpha_1,q \beta_1 / \alpha_2 ; q\big)_{\infty}} \nonumber \\
& \hphantom{=}\times {}_3\phi_2 \biggl(\!\! \begin{array}{c} q^{-\lambda}\beta_1 x, \beta_1/\alpha_1, q\beta_1/\alpha_2\\
 \beta_1 x, q\beta_1/\beta_2 \end{array}\! ;q,q^{\lambda}\frac{\alpha_1\alpha_2}{\beta_1\beta_2} \biggr),
\label{eq:xi1be1}
\end{align}
where the convergence condition is given by $\big| q^{\lambda} \alpha_1 \alpha_2 /(\beta_1 \beta_2 ) \big| <1 $.
The case $\xi= 1 /\beta_2 $ is obtained from the case $\xi=1 /\beta_1 $ by replacing $\beta_1$ with $\beta_2$.

If $\xi=x$, then
\begin{align}
 \widehat{y}_1(x) & = (1-q) x^{\lambda} \sum_{n=-\infty}^{-1} (q^n x )^{\mu' -\lambda} \frac{\big( q^{-n}, q^{1-n} /( \beta_1 x ), q^{1-n} /( \beta_2 x ) ; q\big)_{\infty}}{\big(q^{-n-\lambda}, q^{-n} / (\alpha_1 x ),q^{1-n} / (\alpha_2 x ) ; q\big)_{\infty}} \nonumber
 \\
& = (1-q) q^{\lambda}\frac{\alpha_1\alpha_2}{\beta_1\beta_2} x^{\mu '} \frac{\big( q^{2} /( \beta_1 x ), q^{2} /( \beta_2 x ), q ; q\big)_{\infty}}{\big(q / (\alpha_1 x ),q^{2} / (\alpha_2 x ), q^{1-\lambda} ; q\big)_{\infty}} \nonumber
\\
& \hphantom{=}\times {}_3\phi_2 \biggl(\!\! \begin{array}{c} q/(\alpha_1 x), q^2/(\alpha_2 x), q^{1-\lambda}\\
 q^2/(\beta_1 x), q^2/(\beta_2 x) \end{array}\! ;q,q^{\lambda}\frac{\alpha_1\alpha_2}{\beta_1\beta_2} \biggr),
\label{eq:xi1x}
\end{align}
where the convergence condition is given by $\big| q^{\lambda} \alpha_1 \alpha_2 /(\beta_1 \beta_2 ) \big| <1 $.

Since the non-homogeneous term in equation~\eqref{eq:g1KL} vanishes in these cases, we have
\begin{Proposition}
If $\big| q^{\lambda} \alpha_1 \alpha_2 /(\beta_1 \beta_2 ) \big| <1 $, then the functions in equations~\eqref{eq:xi1be1}, \eqref{eq:xi1x} satisfy equation~\eqref{eq:g1eqdeg2k}.
\end{Proposition}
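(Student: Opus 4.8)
The plan is to deduce equation~\eqref{eq:g1eqdeg2k} for the two limit functions by letting $K\to-\infty$ and $L\to+\infty$ in the inhomogeneous truncated identity~\eqref{eq:g1KL}, exploiting that its non-homogeneous term disappears in this limit for the special evaluations $\xi=1/\beta_1$ and $\xi=x$.

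First I would observe that, with the alternative pair~\eqref{eq:Ply} for $P_{\lambda}$ and $y$, the truncated function $\widehat{y}_1^{\,[K,L]}(x)$ still satisfies equation~\eqref{eq:g1KL}; this is the same computation (Proposition~\ref{prop:convKL} followed by Proposition~\ref{prop:g1g2single}) that produced~\eqref{eq:g1KL} in the case $\mu=0$, now applied to the solution $y$ of~\eqref{eq:Ply}, the only change being that $Q^{[K,L]}$ is given by~\eqref{eq:QKL} and the non-homogeneous term takes the form~\eqref{eq:nonhomKL22}. Next I would invoke the convergence already recorded above under $\big|q^{\lambda}\alpha_1\alpha_2/(\beta_1\beta_2)\big|<1$: the series for $\widehat{y}_1^{\,[K,L]}(x)$ converges absolutely as $K\to-\infty$, $L\to+\infty$ to $\widehat{y}_1(x)$, which is precisely the function~\eqref{eq:xi1be1} when $\xi=1/\beta_1$ and the function~\eqref{eq:xi1x} when $\xi=x$. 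Since the three arguments $x/q$, $x$, $qx$ occurring in~\eqref{eq:g1KL} are governed by three separately convergent series and the coefficients multiplying them are rational in $x$ and independent of $K,L$, the homogeneous part of~\eqref{eq:g1KL} tends to the left-hand side of~\eqref{eq:g1eqdeg2k} with $\widehat{y}_1(x)$ in place of $\widehat{y}_1^{\,[K,L]}(x)$.

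It then remains only to annihilate the non-homogeneous term~\eqref{eq:nonhomKL22}, which was essentially done in the discussion preceding the proposition. For $\xi=1/\beta_1$ (and likewise $\xi=1/\beta_2$ or $\xi=x$) the $L$-piece of~\eqref{eq:nonhomKL22} contains a factor reducing to $\big(q^{-L};q\big)_{\infty}$, which vanishes for every positive integer $L$, while the $K$-piece reduces to $\xi^{\mu'-\lambda}\big(\beta_1\beta_2 q^{-\lambda}/(\alpha_1\alpha_2)\big)^{K}$ and tends to $0$ as $K\to-\infty$ precisely because $\big|q^{\lambda}\alpha_1\alpha_2/(\beta_1\beta_2)\big|<1$ forces $\big|\beta_1\beta_2 q^{-\lambda}/(\alpha_1\alpha_2)\big|>1$ (using $q^{\mu'}\alpha_1\alpha_2/(\beta_1\beta_2)=1$). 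Hence the whole non-homogeneous term tends to $0$ and the limit of~\eqref{eq:g1KL} is exactly equation~\eqref{eq:g1eqdeg2k} for $\widehat{y}_1(x)$. I expect the only step needing genuine care to be the termwise passage to the limit at each shifted argument: for fixed $x$ away from the poles $x=q^{\lambda+1}/\beta_i$ and $x=1/\alpha_i$, each of $\widehat{y}_1^{\,[K,L]}(x/q)$, $\widehat{y}_1^{\,[K,L]}(x)$, $\widehat{y}_1^{\,[K,L]}(qx)$ converges absolutely, so the finite linear combination in~\eqref{eq:g1KL} converges to the same combination of limits; everything else is substitution of the already-computed limits~\eqref{eq:xi1be1} and~\eqref{eq:xi1x}.
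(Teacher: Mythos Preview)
Your proposal is correct and follows essentially the same approach as the paper: the argument is precisely the one laid out in the discussion immediately preceding the proposition, namely passing to the limit $K\to-\infty$, $L\to+\infty$ in the truncated identity~\eqref{eq:g1KL} and observing that, for $\xi=1/\beta_1$ (or $1/\beta_2$) and $\xi=x$, the non-homogeneous term~\eqref{eq:nonhomKL22} vanishes in the limit under the hypothesis $\big|q^{\lambda}\alpha_1\alpha_2/(\beta_1\beta_2)\big|<1$. The paper records the proposition as an immediate consequence of that discussion without a separate displayed proof.
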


We replace the parameters by using Proposition~\ref{prop:paramrel} to fit results on $q$-integrals with the variant of $q$-hypergeometric equation of degree two given in equation~\eqref{eq:varqhgdeg2}.
We recall Proposition~\ref{prop:paramrel}.
If the function $\widehat{y}_1(x) $ is a solution to equation~\eqref{eq:g1eqdeg2k}, then the function $g (x) = x ^{-k_2} \widehat{y}_1(x) $ satisfies equation~\eqref{eq:varqhgdeg2} by setting the parameters as equation~\eqref{eq:propparam}.
We apply it to equations~\eqref{eq:xi1be1}, \eqref{eq:xi1x} and we multiply relevant constants.
Then we have the following theorem.
\begin{Theorem} \label{thm:deg2-1}
Set $\lambda = (h_1+h_2-l_1-l_2-k_1 +k_2 +1)/2 $ and assume $\lambda +k_1 -k_2 >0 $.
\begin{enumerate}
\item [$(i)$] Let $i \in \{ 1,2 \}$ and set $i'=3-i$.
The function
\begin{align*}
 g(x) & = x^{\lambda -k_2} \frac{\big( q^{\lambda -h_i +1/2} x /t_i ; q\big)_{\infty}}{\big( q^{ -h_i +1/2} x /t_i ; q\big)_{\infty}}
 \\
& \hphantom{=}\times {}_3\phi_2 \biggl(\!\! \begin{array}{c} q^{ -h_i +1/2} x /t_i, q^{\lambda -h_i + l_i},q^{\lambda -h_i + l_{i'}} t_{i'} /t_i \\
 q^{\lambda -h_i +1/2} x /t_i, q^{ 1 -h_i + h_{i'}} t_{i'} / t_i \end{array}\! ;q,q^{\lambda +k_1 -k_2} \biggr)
\end{align*}
satisfies the variant of $q$-hypergeometric equation of degree two given in equation~\eqref{eq:varqhgdeg2}.

\item [$(ii)$] The function
\begin{align*}
 g(x) & = x^{ -k_1} \frac{\big( q^{- \lambda +h_1 +3/2} t_1 / x, q^{ -\lambda + h_2 + 3/2}t_2 / x ; q\big)_{\infty}}{\big( q^{l_1 + 1/2} t_1 / x,q^{l_2 +1 /2} t_2 / x ; q\big)_{\infty}} \nonumber
 \\
& \hphantom{=}\times {}_3\phi_2 \biggl(\!\! \begin{array}{c} q^{l_1 + 1/2} t_1 / x,q^{l_2 +1 /2} t_2 / x, q^{1-\lambda} \\
 q^{- \lambda +h_1 +3/2} t_1 / x, q^{ -\lambda + h_2 + 3/2}t_2 / x \end{array}\! ;q,q^{\lambda + k_1 -k_2} \biggr)
\end{align*}
satisfies the variant of $q$-hypergeometric equation of degree two given in equation~\eqref{eq:varqhgdeg2}.
\end{enumerate}
\end{Theorem}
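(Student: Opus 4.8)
The plan is to obtain both assertions by combining the Proposition stated just before the theorem---which asserts that, under $\big|q^{\lambda}\alpha_1\alpha_2/(\beta_1\beta_2)\big|<1$, the functions $\widehat{y}_1(x)$ in equations~\eqref{eq:xi1be1} and~\eqref{eq:xi1x} solve equation~\eqref{eq:g1eqdeg2k}---with Proposition~\ref{prop:paramrel}, which converts any solution of~\eqref{eq:g1eqdeg2k} into a solution $g(x)=x^{-k_2}\widehat{y}_1(x)$ of the variant~\eqref{eq:varqhgdeg2} after the substitution~\eqref{eq:propparam}. Because equation~\eqref{eq:varqhgdeg2} is linear and homogeneous, any overall nonzero constant may be discarded; in particular all $x$-independent $q$-shifted factorials in the prefactors of~\eqref{eq:xi1be1} and~\eqref{eq:xi1x} are absorbed into such a constant.

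First I would reconcile the convergence hypothesis with the stated assumption. Substituting~\eqref{eq:propparam} and using $2\lambda=h_1+h_2-l_1-l_2-k_1+k_2+1$, a short computation gives
\begin{equation*}
q^{\lambda}\frac{\alpha_1\alpha_2}{\beta_1\beta_2}=q^{\lambda+k_1-k_2},
\end{equation*}
so that $\big|q^{\lambda}\alpha_1\alpha_2/(\beta_1\beta_2)\big|<1$ is equivalent to $\lambda+k_1-k_2>0$. This is precisely the hypothesis of the theorem, so the preceding Proposition is applicable.

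For part~(i) with $i=1$ I would take $\xi=1/\beta_1$, i.e.\ the function in~\eqref{eq:xi1be1}, and insert~\eqref{eq:propparam}. The $x$-dependent part of the prefactor becomes $x^{\lambda}\big(q^{\lambda-h_1+1/2}x/t_1;q\big)_\infty\big/\big(q^{-h_1+1/2}x/t_1;q\big)_\infty$, and multiplication by $x^{-k_2}$ produces the leading power $x^{\lambda-k_2}$. Rewriting the five parameters of the ${}_3\phi_2$ under~\eqref{eq:propparam} turns the numerator entries into $q^{-h_1+1/2}x/t_1$, $q^{\lambda-h_1+l_1}$, $q^{\lambda-h_1+l_2}t_2/t_1$ and the denominator entries into $q^{\lambda-h_1+1/2}x/t_1$, $q^{1-h_1+h_2}t_2/t_1$, with base argument $q^{\lambda+k_1-k_2}$, reproducing exactly the displayed $g(x)$. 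The case $i=2$ follows by the symmetry noted in the text, namely that $\xi=1/\beta_2$ is obtained from $\xi=1/\beta_1$ by interchanging $\beta_1\leftrightarrow\beta_2$, equivalently $(h_1,l_1,t_1)\leftrightarrow(h_2,l_2,t_2)$.

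For part~(ii) I would instead use the replacement~\eqref{eq:Ply} with $\xi=x$, that is, the function in~\eqref{eq:xi1x}. Its leading exponent $\mu'$ is fixed by $q^{\mu'}=\beta_1\beta_2/(\alpha_1\alpha_2)$, and~\eqref{eq:propparam} gives $q^{\mu'}=q^{k_2-k_1}$, whence $x^{-k_2}x^{\mu'}=x^{-k_1}$, matching the stated leading power. Substituting~\eqref{eq:propparam} into the prefactor converts its $x$-dependent factors into $\big(q^{-\lambda+h_1+3/2}t_1/x,q^{-\lambda+h_2+3/2}t_2/x;q\big)_\infty / \big(q^{l_1+1/2}t_1/x,q^{l_2+1/2}t_2/x;q\big)_\infty$, and the six parameters of the ${}_3\phi_2$ into the entries displayed in~(ii), again with base argument $q^{\lambda+k_1-k_2}$. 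The only genuine work throughout is this bookkeeping---systematically translating every $\alpha_i,\beta_i$ expression into the variables $h_j,l_j,k_j,t_j$ and verifying each entry of the two hypergeometric symbols together with the overall power of $x$---which I expect to be the main, though entirely routine, obstacle.
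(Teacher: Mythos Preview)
Your proposal is correct and follows essentially the same approach as the paper: the paper likewise obtains the theorem by applying Proposition~\ref{prop:paramrel} to the solutions~\eqref{eq:xi1be1} and~\eqref{eq:xi1x} of~\eqref{eq:g1eqdeg2k} (after dropping $x$-independent constants), and the convergence hypothesis translates to $\lambda+k_1-k_2>0$ exactly as you verify. One small remark: swapping $\beta_1\leftrightarrow\beta_2$ in the parametrization~\eqref{eq:propparam} corresponds to $(h_1,t_1)\leftrightarrow(h_2,t_2)$ rather than the full $(h_i,l_i,t_i)$ swap, but since the numerator entries of ${}_3\phi_2$ may be permuted freely the $\xi=1/\beta_2$ substitution still lands on the stated $i=2$ solution, so your conclusion stands.
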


We replace the parameters in Proposition~\ref{prop:y1nonhom} by using Proposition~\ref{prop:paramrel}.
By setting $g (x) = x ^{-k_2} \widehat{y}_1(x) /\{ \alpha_1 (q-1) \} $, equation~\eqref{eq:y1nonhom} is replaced with
\begin{align}
& \big(x-q^{h_1 +1/2} t_1\big) \big(x - q^{h_2 +1/2} t_2\big) g(x/q) + q^{k_1 +k_2} \big(x - q^{l_1-1/2}t_1\big) \big(x - q^{l_2 -1/2} t_2\big) g(q x) \nonumber
\\
&\qquad{} -\big[ \big(q^{k_1} +q^{k_2} \big) x^2 +E x + p \big( q^{1/2}+ q^{-1/2}\big) t_1 t_2 \big] g(x) \nonumber
\\
&\qquad{} + \big(1-q^{(h_1+h_2-l_1-l_2-k_1 +k_2 +1)/2}\big) q^{l_1 + l_2 + k_1 -1} t_1 t_2 x^{1-k_2} =0, \nonumber
\\
& p= q^{(h_1 +h_2 + l_1 + l_2 +k_1 +k_2 )/2}, \qquad
E= -p \big\{ \big(q^{- h_2}+q^{-l_2}\big)t_1 + \big(q^{- h_1}+ q^{- l_1}\big) t_2 \big\},
\label{eq:varqhgdeg2nonhom}
\end{align}
which is a non-homogeneous version of the variant of $q$-hypergeometric equation of degree two.
Equation~\eqref{eq:hy1xivqhg2} is replaced with
\begin{equation}
g (x)= x ^{-k_2} \sum_{n=-\infty}^{+\infty} q^n\xi\frac{\big(q^{n + \lambda +1} \xi / x, q^{n- l_1 + 3/2} \xi /t_1, q^{n-l_2 +3/2} \xi /t_2 ; q\big)_{\infty}}{\big(q^{n+1} \xi / x, q^{n + \lambda -h_1 +1/2} \xi /t_1, q^{n + \lambda -h_2 +1/2} \xi /t_2 ; q\big)_{\infty}}, \label{eq:gxivqhg2}
\end{equation}
where $\lambda = (h_1+h_2-l_1-l_2-k_1 +k_2 +1)/2 $.
Then it follows from Proposition~\ref{prop:y1nonhom} that, if $\lambda +k_1 -k_2 >0 $, then the function $g(x)$ in equation~\eqref{eq:gxivqhg2} is a solution to equation~\eqref{eq:varqhgdeg2nonhom}.
By~rewriting equations~\eqref{eq:xi1al1}--\eqref{eq:xiqlax}, we have the following theorem.
\begin{Theorem} \label{thm:deg2-2}
Set $\lambda = (h_1+h_2-l_1-l_2-k_1 +k_2 +1)/2 $.
\begin{enumerate}
\item [$(i)$] Let $i \in \{ 1,2 \}$ and set $i'=3-i$.
The function
\begin{align*}
& g_i(x) = x ^{-k_2} q^{l_i - 1/2}t_i \frac{\big(q^{\lambda + l_i + 1/2}t_i / x, q^{l_i -l_{i'} +1} t_i / t_{i'}, q;q\big)_{\infty}}{\big(q^{ l_i + 1/2}t_i / x, q^{\lambda -h_i + l_i}, q^{\lambda - h_{i'} + l_i} t_i /t_{i'} ;q\big)_{\infty}} \nonumber
\\
& \hphantom{=}\times {}_3\phi_2 \biggl(\!\! \begin{array}{c} q^{ l_i + 1/2}t_i / x, q^{\lambda -h_i + l_i}, q^{\lambda - h_{i'} + l_i} t_i / t_{i'} \\ q^{\lambda + l_i + 1/2} t_i / x, q^{ l_i -l_{i'} +1} t_i / t_{i'} \end{array}\! ;q,q \biggr)
\end{align*}
satisfies the non-homogeneous version of the variant of $q$-hypergeometric equation of degree two given in equation~\eqref{eq:varqhgdeg2nonhom}.

\item [$(ii)$] The function
\begin{align*}
 g_0 (x) &= x ^{1-k_2} q^{-\lambda} \frac{\big(q^{-\lambda - l_1 + 3/2} x/ t_1, q^{-\lambda -l_2 +3/2} x / t_2, q ;q\big)_{\infty}}{\big( q^{-h_1 +1/2} x /t_1, q^{ - h_2 + 1/2} x /t_2, q^{1 -\lambda} ;q\big)_{\infty}} \nonumber\\
& \hphantom{=}\times {}_3\phi_2 \biggl(\!\! \begin{array}{c} q^{ -h_1 +1/2} x /t_1, q^{- h_2 + 1/2} x / t_2, q^{1-\lambda} \\
 q^{ -\lambda - l_1 + 3/2} x/ t_1, q^{-\lambda -l_2 +3/2} x /t_2 \end{array}\! ;q,q \biggr)
\end{align*}
satisfies the non-homogeneous version of the variant of $q$-hypergeometric equation of degree two given in equation~\eqref{eq:varqhgdeg2nonhom}.
\end{enumerate}
\end{Theorem}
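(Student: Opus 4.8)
The plan is to read off Theorem~\ref{thm:deg2-2} from the three explicit evaluations of $\widehat{y}_1(x)$ already displayed in equations~\eqref{eq:xi1al1}, \eqref{eq:xi1al2} and \eqref{eq:xiqlax}, corresponding to the specializations $\xi=1/\alpha_1$, $\xi=1/\alpha_2$ and $\xi=q^{-\lambda}x$. It has already been shown that each of these three functions solves the non-homogeneous equation~\eqref{eq:y1nonhom}, so the whole task reduces to transporting that equation, together with its three solutions, into the parametrization of equation~\eqref{eq:varqhgdeg2nonhom}.

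First I would apply the dictionary~\eqref{eq:propparam} of Proposition~\ref{prop:paramrel}, writing $\alpha_1=q^{-l_1+1/2}/t_1$, $\alpha_2=q^{-l_2+3/2}/t_2$, $\beta_1=q^{\lambda-h_1+1/2}/t_1$, $\beta_2=q^{\lambda-h_2+1/2}/t_2$, and set $g(x)=x^{-k_2}\widehat{y}_1(x)/\{\alpha_1(q-1)\}$. Under this change the homogeneous part of equation~\eqref{eq:y1nonhom} becomes $q^{-k_2}x^{k_2}$ times the homogeneous operator of equation~\eqref{eq:varqhgdeg2}, exactly as in Proposition~\ref{prop:paramrel}; after dividing through by $\alpha_1(q-1)q^{-k_2}x^{k_2}$, the inhomogeneous term $-q(1-q)(1-q^{\lambda})\alpha_1/(\beta_1\beta_2)\,x$ of equation~\eqref{eq:y1nonhom} is carried, using $1/(\beta_1\beta_2)=q^{-2\lambda+h_1+h_2-1}t_1t_2$ and $2\lambda=h_1+h_2-l_1-l_2-k_1+k_2+1$, into precisely the term $(1-q^{\lambda})q^{l_1+l_2+k_1-1}t_1t_2\,x^{1-k_2}$ of equation~\eqref{eq:varqhgdeg2nonhom}. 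Hence every solution of equation~\eqref{eq:y1nonhom} yields, through $g(x)=x^{-k_2}\widehat{y}_1(x)/\{\alpha_1(q-1)\}$, a solution of equation~\eqref{eq:varqhgdeg2nonhom}.

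It then remains to rewrite the three closed forms. For $\xi=1/\alpha_1$ one uses $1/\alpha_1=q^{l_1-1/2}t_1$: the prefactor of equation~\eqref{eq:xi1al1} multiplied by $x^{-k_2}/\{\alpha_1(q-1)\}$ collapses to $x^{-k_2}q^{l_1-1/2}t_1$, while its ${}_3\phi_2$ parameters pass to $q^{l_1+1/2}t_1/x$, $q^{\lambda-h_1+l_1}$, $q^{\lambda-h_2+l_1}t_1/t_2$ over $q^{\lambda+l_1+1/2}t_1/x$, $q^{l_1-l_2+1}t_1/t_2$, reproducing $g_1(x)$; the symmetric substitution applied to equation~\eqref{eq:xi1al2} gives $g_2(x)$, establishing part~(i). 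For part~(ii) the value $\xi=q^{-\lambda}x$ sends the prefactor of equation~\eqref{eq:xiqlax} to $x^{1-k_2}q^{-\lambda}$ and converts its parameters by the same dictionary, yielding $g_0(x)$. The computation is routine; the one place that demands care is the bookkeeping of the inhomogeneous term in the preceding paragraph, where the factor $\alpha_1(q-1)q^{-k_2}x^{k_2}$ must be tracked so that both the power $q^{l_1+l_2+k_1-1}$ and the exponent $x^{1-k_2}$ emerge correctly. I would finally note that, since each of these ${}_3\phi_2$ series carries base argument $q$, it converges for every $q$ with $0<|q|<1$; this is exactly why Theorem~\ref{thm:deg2-2} needs no counterpart of the inequality $\lambda+k_1-k_2>0$ that constrains Theorem~\ref{thm:deg2-1}.
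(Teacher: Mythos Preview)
Your proposal is correct and follows precisely the route the paper takes: it derives Theorem~\ref{thm:deg2-2} by applying the parameter dictionary of Proposition~\ref{prop:paramrel} with the substitution $g(x)=x^{-k_2}\widehat{y}_1(x)/\{\alpha_1(q-1)\}$ to transport equation~\eqref{eq:y1nonhom} into equation~\eqref{eq:varqhgdeg2nonhom}, and then rewrites the three explicit specializations \eqref{eq:xi1al1}, \eqref{eq:xi1al2}, \eqref{eq:xiqlax} in the new parameters. Your explicit bookkeeping of the non-homogeneous term and your closing remark on convergence (the ${}_3\phi_2$ with argument $q$ needs no extra hypothesis) are welcome additions that the paper leaves implicit.
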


By taking the difference of two solutions of the non-homogeneous equation, we obtain a~solution of the homogeneous equation.
\begin{Corollary}
Let $g_i (x)$, $i=0,1,2$, be the functions in Theorem~$\ref{thm:deg2-2}$.
For each $i,j \in \{ 0,1,2 \}$, the function $g_i (x) - g_j (x) $ satisfies the variant of the $q$-hypergeometric equation of degree two given in equation~\eqref{eq:varqhgdeg2}.
\end{Corollary}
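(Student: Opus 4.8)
The plan is to observe that the final statement is an immediate consequence of the linearity of the defining $q$-difference operator together with Theorem~\ref{thm:deg2-2}. First I would denote by $\mathcal{D}$ the linear operator appearing on the left-hand side of equation~\eqref{eq:varqhgdeg2}, namely
\begin{align*}
\mathcal{D}[g](x) = {}& \big(x-q^{h_1 +1/2} t_1\big) \big(x - q^{h_2 +1/2} t_2\big) g(x/q) + q^{k_1 +k_2} \big(x - q^{l_1-1/2}t_1 \big) \big(x - q^{l_2 -1/2} t_2\big) g(q x) \\
& {}-\big[ \big(q^{k_1} +q^{k_2} \big) x^2 +E x + p \big( q^{1/2}+ q^{-1/2}\big) t_1 t_2 \big] g(x),
\end{align*}
so that the variant of the $q$-hypergeometric equation of degree two is $\mathcal{D}[g](x)=0$, while its non-homogeneous version in equation~\eqref{eq:varqhgdeg2nonhom} reads $\mathcal{D}[g](x) + R\, x^{1-k_2}=0$, where $R = \big(1-q^{(h_1+h_2-l_1-l_2-k_1+k_2+1)/2}\big) q^{l_1+l_2+k_1-1} t_1 t_2$ is a constant that does not depend on the particular solution.

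The key point is that the three functions $g_0$, $g_1$, $g_2$ of Theorem~\ref{thm:deg2-2} all satisfy the \emph{same} equation~\eqref{eq:varqhgdeg2nonhom}, with one and the same inhomogeneous term $R\, x^{1-k_2}$; hence $\mathcal{D}[g_i](x) = -R\, x^{1-k_2}$ for each $i \in \{0,1,2\}$. Since $\mathcal{D}$ is $\Cplx$-linear, being a fixed $\Cplx$-linear combination of the shifts $g(x/q)$, $g(x)$, $g(qx)$ with coefficients depending only on $x$, I would then compute, for any $i,j \in \{0,1,2\}$,
\begin{align*}
\mathcal{D}[g_i - g_j](x) = \mathcal{D}[g_i](x) - \mathcal{D}[g_j](x) = -R\, x^{1-k_2} - \big(-R\, x^{1-k_2}\big) = 0,
\end{align*}
which is precisely equation~\eqref{eq:varqhgdeg2}. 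Therefore $g_i - g_j$ is a solution of the homogeneous variant of the $q$-hypergeometric equation of degree two.

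There is essentially no analytic obstacle in this argument: it is purely the cancellation of identical inhomogeneous terms under a linear operator. The only point that genuinely needs to be recorded is that the inhomogeneous term in equation~\eqref{eq:varqhgdeg2nonhom} is the same constant multiple of $x^{1-k_2}$ for all three functions — in particular independent of the index $i$ and of the auxiliary parameter $\xi$ used to produce each $g_i$ — which is exactly the content of Theorem~\ref{thm:deg2-2}. Given this, the corollary follows at once, and the convergence of each $g_i$ needed to legitimize the manipulation has already been secured in the derivation of Theorem~\ref{thm:deg2-2}.
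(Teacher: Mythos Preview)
Your proof is correct and follows precisely the paper's own reasoning: the corollary is introduced with the remark that taking the difference of two solutions of the same non-homogeneous equation yields a solution of the homogeneous one, which is exactly the linearity argument you spell out. There is nothing to add.
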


\subsubsection[The case q\^\{lambda\} = q\^\{mu\} beta\_1 beta\_2 /(alpha\_1 alpha\_2)]{The case $\boldsymbol{q^{\lambda} = q^{\mu} \beta_1 \beta_2 /(\alpha_1 \alpha_2)}$} \label{sec:deg2p2}

We discuss the case $\mu \neq 0 $ and $\dim \mathcal{L} \geq 1$.
Recall that
\begin{align*}
\widehat{F}-\big(1-q^{\lambda}\big)I_3
 = \begin{pmatrix}
 B_0-\big(1-q^{\lambda}\big) & B_1 & B_2\\
 B_0 & B_1-\big(1-q^{\lambda}\big) & B_2\\
 B_0 & B_1 & B_2-\big(1-q^{\lambda}\big)
 \end{pmatrix}
\end{align*}
and $ B_0 + B_1 + B_2 =1-B_{\infty} = 1- q^{\mu} \beta_1\beta_2 /(\alpha_1\alpha_2 )$.
Therefore, if $q^{\lambda}=q^{\mu} \beta_1\beta_2 /(\alpha_1\alpha_2 )$, we can take a basis of the vector space $\operatorname{ker} \big(\widehat{F}-\big(1-q^{\lambda}\big)I_3\big)$ as $ ^t(1,1,1)$ and $\dim\ \mathcal{L} = 1$.
We take an invertible matrix as
\begin{equation}
P=\begin{pmatrix}
 0 & 0 & 1\\
 1 & 0 & 1\\
 0 & 1 & 1
\end{pmatrix}\!. \label{eq:P2}
\end{equation}
Then we have
\begin{align*}
& P^{-1}F_1P = \begin{pmatrix}
 B_1-\big(1-q^{\lambda}\big) & B_2 & 0\\
 0 & 0 & 0\\
 0 & 0 & 0
 \end{pmatrix}\!, \qquad
 P^{-1}F_2P = \begin{pmatrix}
 0 & 0 & 0\\
 B_1 & B_2-\big(1-q^{\lambda}\big) & 0\\
 0 & 0 & 0
 \end{pmatrix}\!, \nonumber
 \\
& P^{-1}F_{\infty}P = \begin{pmatrix}
 1 & 0 & 0\\
 0 & 1 & 0\\
 -B_1 & -B_2 & 1-B_0-B_1-B_2
 \end{pmatrix}\!.
\end{align*}
The upper-left $2\times 2$ submatrices of $P^{-1}F_{\infty} P$, $P^{-1} F_1 P$, $P^{-1}F_2 P$ are matrix representations of the transformations $F_{\infty}$, $F_1$, $F_2 $ on the quotient space $\Cplx ^3 / \mathcal{L} $.
Thus, $mc_{\lambda} (\textbf{B}) = \overline{\textbf{F}} = \big( \overline{F}_{\infty}; \overline{F}_1,\overline{F}_2\big)$, where
\begin{align*}
&
\overline{F}_1 = \begin{pmatrix}
 B_1-\big(1-q^{\lambda}\big) & B_2\\
 0 & 0
 \end{pmatrix}\!, \qquad
\overline{F}_2 = \begin{pmatrix}
 0 & 0\\
 B_1 & B_2-\big(1-q^{\lambda}\big)
 \end{pmatrix}\!, \qquad
\overline{F}_{\infty} = \begin{pmatrix}
 1 & 0\\
 0 & 1
 \end{pmatrix}\!.
\end{align*}
The $q$-difference equation $E_{\overline{\mathbf{F}},b}$ is written as
\begin{align}
& \begin{pmatrix} \bar{g}_1(qx) \\ \bar{g}_2(qx) \end{pmatrix}
 = \biggl( \overline{F}_{\infty}+\frac{\overline{F}_1}{1- \alpha_1 x}+\frac{\overline{F}_2}{1- \alpha_2 x} \biggr)
\begin{pmatrix} \bar{g}_1(x) \\ \bar{g}_2(x) \end{pmatrix}\! . \label{eq:g1g2sec412}
\end{align}
We apply Proposition~\ref{prop:g1g2single}.
Then a $q$-difference equation for $\bar{g}_1(x) $ is written as
\begin{align}
&\biggl(x-\frac{q^{\mu +1}\beta_1}{\alpha_1\alpha_2}\biggr)\biggl(x-\frac{q^{\mu +1}\beta_2}{\alpha_1\alpha_2}\biggr)\bar{g}_1(x/q) + q\biggl(x-\frac{1}{\alpha_1}\biggr)\biggl(x-\frac{q}{\alpha_2}\biggr)\bar{g}_1(qx) \nonumber
\\
&\qquad{} -\biggl\{(1+q)x^2-\biggl(q^{\mu+1}\frac{\beta_1+\beta_2}{\alpha_1\alpha_2} +\frac{q}{\alpha_1}+\frac{q^2}{\alpha_2}\biggr)x+ \biggl(1+\frac{\beta_1\beta_2}{\alpha_1\alpha_2}\biggr) \frac{q^{\mu+2}}{\alpha_1\alpha_2}\biggr\}\bar{g}_1(x)=0 .
\label{eq:g1eqdeg2m}
\end{align}
Note that this equation is transformed to the variant of $q$-hypergeometric equation of degree two given in equation~\eqref{eq:varqhgdeg2} by setting $x=1/z $.
See Proposition~\ref{prop:d2zparam} in the appendix for details.
The function $\bar{g}_2(x) $ in equation~\eqref{eq:g1g2sec412} satisfies the $q$-difference equation written by replacing $\alpha_1$ and $\alpha_2$ in equation~\eqref{eq:g1eqdeg2m}.

We can also discuss the convergence of the $q$-integral representations obtained by the $q$-middle convolution and the actual $q$-difference equations which the $q$-integral representations satisfy, that are described in the appendix.
See also~\cite{Ar}.

As a result, it is shown in the appendix that the function
\begin{align*}
\bar{g}_1(x) &= (q-1)\alpha_1^{-\mu}\frac{\big(q^{\lambda+1}/(\alpha_1 x), q, \alpha_2/\alpha_1;q\big)_{\infty}}{\big(q/(\alpha_1 x), \beta_1/\alpha_1, \beta_2/\alpha_1 ;q\big)_{\infty}} \,{}_3\phi_2 \left(\!\!\begin{array}{c} q/(\alpha_1 x), \beta_1/\alpha_1, \beta_2/\alpha_1\\
 q^{\lambda+1}/(\alpha_1 x), \alpha_2/\alpha_1 \end{array}\! ;q,q^{\mu} \right)
\end{align*}
satisfies equation~\eqref{eq:g1eqdeg2m}, if $\mu > 0$.
It is obtained as a part of Proposition~\ref{prop:mdeg2homo}.
On the other hand, the function
\begin{align*}
& \bar{g}_1(x) = \frac{(1-q)\beta_1}{\alpha_1} x^{\lambda} \frac{( \beta_1 x, q, q \beta_1 / \beta_2 ; q)_{\infty}}{\big( q^{-\lambda} x \beta_1, \beta_1 / \alpha_1,q \beta_1 / \alpha_2 ; q\big)_{\infty}} \,{}_3\phi_2 \biggl(\!\! \begin{array}{c} q^{-\lambda}\beta_1 x, \beta_1/\alpha_1, q\beta_1/\alpha_2\\
 \beta_1 x, q\beta_1/\beta_2 \end{array}\! ;q,q \biggr)
\end{align*}
satisfies the equation
\begin{align*}
& \bigg( x-\frac{q^{\mu+1}\beta_1}{\alpha_1\alpha_2} \bigg) \bigg( x-\frac{q^{\mu+1}\beta_2}{\alpha_1\alpha_2} \bigg) \bar{g}_1(x/q) + q \bigg( x-\frac{1}{\alpha_1} \bigg) \bigg( x-\frac{q}{\alpha_2} \bigg) \bar{g}_1(qx) \nonumber
\\
&\qquad - \bigg\{ (1+q)x^2 - \bigg(q^{\mu+1}\frac{\beta_1+\beta_2}{\alpha_1\alpha_2} +\frac{q}{\alpha_1}+\frac{q^2}{\alpha_2}\bigg) x + \bigg(1+\frac{\beta_1\beta_2}{\alpha_1\alpha_2} \bigg) \frac{q^{\mu+2}}{\alpha_1\alpha_2} \bigg\} \bar{g}_1(x) \nonumber
\\
&\qquad + \frac{q(1-q)\big(1-q^{\lambda}\big)}{\alpha_1} x^{\lambda+1} = 0,
\end{align*}
which is the non-homogeneous version of equation~\eqref{eq:g1eqdeg2m}.
It is obtained as a part of Proposition~\ref{prop:mdeg2nonhomo}.

\subsection[q-middle convolution and variants of q-hypergeometric equation of degree three]{$\boldsymbol{q}$-middle convolution and variants of $\boldsymbol{q}$-hypergeometric equation\\ of degree three} \label{sec:deg3}

Assume that $\alpha_1$, $\alpha_2$, $\alpha_3$, $\beta_1$, $\beta_2$, $\beta_3$ are mutually distinct and set
\begin{equation*}
y(x)=x^{\mu}\frac{(\alpha_1 x, \alpha_2 x, \alpha_3 x;q)_{\infty}}{(\beta_1 x, \beta_2 x, \beta_3 x;q)_{\infty}}.
\end{equation*}
The function $y(x)$ satisfies the linear $q$-difference equation $y(qx)=B(x)y(x)$, where
\begin{align}\label{eq:gqxBxgxqJPN3}
\begin{split}
& B(x)=B_{\infty} + \frac{B_1}{1-x/b_1}+ \frac{B_2}{1-x/b_2}+ \frac{B_3}{1-x/b_3}, \quad b_1=\frac{1}{\alpha_1},\quad
b_2=\frac{1}{\alpha_2},\quad
b_3 =\frac{1}{\alpha_3},
\\
& B_{\infty} = q^{\mu}\frac{\beta_1\beta_2\beta_3}{\alpha_1\alpha_2\alpha_3},\qquad
B_1 = q^{\mu}\frac{(\alpha_1-\beta_1)(\alpha_1-\beta_2)(\alpha_1-\beta_3)} {\alpha_1(\alpha_1-\alpha_2)(\alpha_1-\alpha_3)},
\\
& B_2 = q^{\mu} \frac{(\alpha_2-\beta_1)(\alpha_2-\beta_2)(\alpha_2-\beta_3)} {\alpha_2(\alpha_2-\alpha_3)(\alpha_2-\alpha_1)},\qquad
B_3 = q^{\mu} \frac{(\alpha_3-\beta_1)(\alpha_3-\beta_2)(\alpha_3-\beta_3)} {\alpha_3(\alpha_3-\alpha_2)(\alpha_3-\alpha_1)}.
\end{split}
\end{align}
We apply the convolution $c_{\lambda}$.
Then the tuple of $4\times 4$ matrices $c_{\lambda} (\textbf{B}) = \textbf{F}=(F_{\infty}; F_1, F_2,F_3)$ are written as equation~\eqref{eq:FiJP} for $N=3$ and the equation $E_{\textbf{F},b}$ is written as
\begin{align}\label{eq:claN3}
\begin{split}
& \widehat{Y}(qx) = H \widehat{Y}(x), \qquad
\widehat{Y}(x) = \begin{pmatrix} \widehat{y}_0(x) \\ \widehat{y}_1(x) \\ \widehat{y}_2(x) \\ \widehat{y}_3(x) \end{pmatrix}\!,
\\
&H = F_{\infty}+ \frac{F_1}{1-\alpha_1 x}+ \frac{F_2}{1-\alpha_2 x}+ \frac{F_3}{1-\alpha_3 x} .
\end{split}
\end{align}
The middle convolution was obtained by considering the quotient space divided by the space $\mathcal{K} +\mathcal{L} $.
Recall that the space $\mathcal{L} ( \subset \Cplx ^4)$ was defined by $\mathcal{L} = \operatorname{ker} (\widehat{F}-(1-q^{\lambda})I_4)$,
\begin{align*}
\widehat{F}-\big(1-q^{\lambda}\big)I_4
 = \begin{pmatrix}
 B_0 -\big(1-q^{\lambda}\big) & B_1 & B_2 & B_3\\
 B_0 & B_1-\big(1-q^{\lambda}\big) & B_2 & B_3\\
 B_0 & B_1 & B_2-\big(1-q^{\lambda}\big) & B_3\\
 B_0 & B_1 & B_2 & B_3-\big(1-q^{\lambda}\big)
 \end{pmatrix}\!,
\end{align*}
and $ B_0=1-B_{\infty}-B_1 -B_2-B_3=1-q^{\mu}$.
If $B_0 +B_1+ B_2 + B_3 =1-q^{\lambda}$ $(\Leftrightarrow q^{\lambda}=B_{\infty} = q^{\mu} \beta_1\beta_2\beta_3 /(\alpha_1\alpha_2\alpha_3 )$, then $\dim\ \mathcal{L} = 1$ and the vector $ ^t(1,1,1,1)$ is a basis of $\mathcal{L} $.
On the other hand, if $\mu=0$, then $B_0=0$, $\dim \mathcal{K} = 1$ and the vector $ ^t(1,0,0,0)$ is a basis of $\mathcal{K} $.

In order to write down the $q$-difference equation corresponding to the $q$-middle convolution, we take an invertible matrix $P$ as
\begin{equation}
P=\begin{pmatrix}
 0 & 0 & 1 & 1\\
 1 & 0 & 1 & 0\\
 0 & 1 & 1 & 0\\
 0 & 0 & 1 & 0
\end{pmatrix}
\label{eq:P}
\end{equation}
and set
\begin{equation*}
	\widetilde{F}_{1} = P^{-1}F_{1}P, \qquad
	\widetilde{F}_{2} = P^{-1}F_{2}P, \qquad
	\widetilde{F}_{3} = P^{-1}F_{3}P, \qquad
	\widetilde{F}_{\infty} = P^{-1}{F}_{\infty}P.
\end{equation*}
The function $\check{Y}(x) = P ^{-1} \widehat{Y}(x)$ satisfies
\begin{align}
& \check{Y}(qx)
= \biggl(\widetilde{F}_{\infty}+ \frac{\widetilde{F}_1}{1-\alpha_1 x}+ \frac{\widetilde{F}_2}{1-\alpha_2 x}+ \frac{\widetilde{F}_3}{1-\alpha_3 x}\biggr) \check{Y}(x), \qquad
\check{Y}(x) = \begin{pmatrix} \bar{g}_1(x) \\ \bar{g}_2(x) \\ \bar{g}_3(x) \\ \bar{g}_4 (x) \end{pmatrix}\! . \label{eq:claPN3}
\end{align}
Then it follows from a lengthy calculation that a $q$-difference equation for the function $\bar{g}_1(x)$ is written as
\begin{align}
& \big( q^3\alpha_1 x -1\big) \big(q^2 \alpha_2 x -1\big) \big(q^3 \alpha_3 x -1\big) \bar{g}_1\big(q^4 x\big) +c_3(x) \bar{g}_1\big(q^3 x\big) +c_2(x) \bar{g}_1\big(q^2 x\big) \nonumber
\\
&\qquad +c_1(x) \bar{g}_1(q x) + q^{\mu +4} \big( \beta_1 x - q^{\lambda} \big) \big(\beta_2 x-q^{\lambda} \big) \big(\beta_3 x-q^{\lambda} \big) \bar{g}_1( x) =0,
\label{eq:g1q4}
\end{align}
where $c_3(x) $, $c_2(x)$ and $c_1(x)$ are some polynomials of degree three.

We now impose the condition
\begin{equation}
\mu=0\qquad \mbox{and}\qquad q^{\lambda} = \frac{\beta_1\beta_2\beta_3}{\alpha_1\alpha_2\alpha_3}. \label{eq:condmula}
\end{equation}
Then we have
\begin{align*}
& \widetilde{F}_{1} = \begin{pmatrix}
 B_1-\big(1-q^{\lambda}\big) & B_2 & 0 & 0\\
 0 & 0 & 0 & 0\\
 0 & 0 & 0 & 0\\
 0 & 0 & 0 & 0
 \end{pmatrix}\!,\qquad
 \widetilde{F}_{2} = \begin{pmatrix}
 0 & 0 & 0 & 0\\
 B_1 & B_2-\big(1-q^{\lambda}\big) & 0 & 0\\
 0 & 0 & 0 & 0\\
 0 & 0 & 0 & 0
 \end{pmatrix}\!, \nonumber
 \\
& \widetilde{F}_{3} = \begin{pmatrix}
 -B_1 & -B_2 & 0 & 0\\
 -B_1 & -B_2 & 0 & 0\\
 B_1 & B_2 & 0 & 0\\
 -B_1 & -B_2 & 0 & 0
 \end{pmatrix}\!,\qquad
 \widetilde{F}_{\infty} = \begin{pmatrix}
 1 & 0 & 0 & 0\\
 0 & 1 & 0 & 0\\
 -B_1 & -B_2 & q^{\lambda} & 0\\
 0 & 0 & 0 & 1
 \end{pmatrix}\!.
\end{align*}
The upper-left $2\times 2$ submatrices of $\widetilde{F}_{\infty}$, $\widetilde{F}_{1}$, $\widetilde{F}_{2}$, $\widetilde{F}_{3}$ are matrix representations of the transformations $F_{\infty}$, $F_1$, $F_2$, $F_3$ on the quotient space $\Cplx ^4 / (\mathcal{K} + \mathcal{L})$.
Thus, the $q$-middle convolution $ mc_{\lambda} (\textbf{B})$ is realized as the tuple of these $2\times 2$ matrices.
We can restrict equation~\eqref{eq:claPN3} by choosing the first two components, and it is written as
\begin{align*}
& \begin{pmatrix} \bar{g}_1(qx) \\ \bar{g}_2(qx) \end{pmatrix}
 = \begin{pmatrix}
 \displaystyle 1+\frac{B_1-1+q^{\lambda}}{1-\alpha_1 x}-\frac{B_1}{1-\alpha_3 x} & \displaystyle \frac{B_2}{1-\alpha_1 x}-\frac{B_2}{1-\alpha_3 x}\\
 \displaystyle \frac{B_1}{1-\alpha_2 x}-\frac{B_1}{1-\alpha_3 x} & \displaystyle 1+\frac{B_2-1+q^{\lambda}}{1-\alpha_2 x}-\frac{B_2}{1-\alpha_3 x}
 \end{pmatrix}
\begin{pmatrix}
\bar{g}_1(x) \\ \bar{g}_2(x) \end{pmatrix}\! .
\end{align*}
By applying Proposition~\ref{prop:g1g2single}, a $q$-difference equation for the function $\bar{g}_1(x)$ is written as
\begin{align}
& \biggl( x-q\frac{\beta_1\beta_2}{\alpha_1\alpha_2\alpha_3} \biggr) \biggl( x-q\frac{\beta_2\beta_3}{\alpha_1\alpha_2\alpha_3} \biggr) \biggl( x-q\frac{\beta_3\beta_1}{\alpha_1\alpha_2\alpha_3} \biggr) \bar{g}_1(x/q) \nonumber
\\
&\qquad +q \biggl( x-\frac{1}{\alpha_1} \biggr) \biggl( x-\frac{q}{\alpha_2} \biggr) \biggl( x-\frac{1}{\alpha_3} \biggr) \bar{g}_1(qx) \nonumber
\\
&\qquad + \biggl\{ -(1+q)x^3 + q \biggl( \frac{1}{\alpha_1}+\frac{q}{\alpha_2}+\frac{1}{\alpha_3} +\frac{\beta_1\beta_2+\beta_2\beta_3+\beta_3\beta_1}{\alpha_1\alpha_2\alpha_3} \biggr) x^2 \nonumber
\\
& \qquad \hphantom{+ \biggl\{} - \frac{q^2}{\alpha_1\alpha_2\alpha_3} \biggl( \beta_1+\beta_2+\beta_3 +\frac{\beta_1\beta_2\beta_3}{\alpha_1 \alpha_2 \alpha_3} \biggl( \alpha_1 +\frac{\alpha_2}{q} + \alpha_3 \biggr) \biggr) x \nonumber
\\
& \qquad \hphantom{+ \biggl\{} +q^2(1+q)\frac{\beta_1\beta_2\beta_3}{{\alpha_1}^2{\alpha_2}^2{\alpha_3}^2}\biggr\} \bar{g}_1(x) =0 .
\label{eq:g1eqdeg3}
\end{align}
The function $\bar{g}_2(x) $ satisfies the $q$-difference equation written by replacing $\alpha_1$ and $\alpha_2$ in equation~\eqref{eq:g1eqdeg3}.

Recall that the variant of $q$-hypergeometric equation of degree three was introduced in~\cite{HMST}, and it is written as
\begin{align}
& \big(x-q^{h_1 +1/2} t_1\big) \big(x- q^{h_2 +1/2} t_2\big) \big(x- q^{h_3 +1/2} t_3\big) g(x/q) \nonumber
\\
& \qquad + q^{2\alpha +1} \big(x - q^{l_1-1/2}t_1 \big) \big(x - q^{l_2 -1/2} t_2\big) \big(x - q^{l_3 -1/2} t_3\big) g(qx) \nonumber
\\
& \qquad + q^{\alpha} \bigl[ - (q + 1 ) x^3 + q^{1/2} \big\{ \big(q^{h_1} + q^{l_1}\big)t_1 + \big(q^{h_2} + q^{l_2}\big)t_2 + \big(q^{h_3} + q^{l_3}\big)t_3 \big\} x^2 \nonumber
\\
& \qquad\phantom{+ q^{\alpha} \bigl[} - q^{(h_1+h_2+h_3+l_1+l_2+l_3 +1)/2} \big\{ \big(q^{- h_1}+q^{-l_1}\big)t_2 t_3 + \big(q^{- h_2}+ q^{- l_2}\big) t_1 t_3 \nonumber
\\
& \qquad\phantom{+ q^{\alpha} \big[} + \big(q^{- h_3}+ q^{- l_3}\big) t_1 t_2 \big\} x + q^{(h_1 +h_2 + h_3 + l_1 + l_2 + l_3 )/2} ( q + 1 ) t_1 t_2t_3 \bigr] g(x) =0 .
\label{eq:varqhgdeg3}
\end{align}
Equation~\eqref{eq:g1eqdeg3} is a special case of the variant of $q$-hypergeometric equation of degree three with the constraint $\alpha =0$.
Equation~\eqref{eq:g1eqdeg3} recovers the variant of $q$-hypergeometric equation of degree three by setting $g (x) = x ^{-\alpha} \bar{g}_1(x) $.
Namely, we obtain the following proposition directly.

\begin{Proposition} \label{prop:paramreldeg3}
Assume that $\bar{g}_1(x) $ satisfies equation~\eqref{eq:g1eqdeg3}.
Set $g (x) = x ^{-\alpha} \bar{g}_1(x) $ and
\begin{align*}
& \alpha_1 = \frac{q^{- l_1 + 1/2}}{t_1}, \quad \
\alpha_2 = \frac{q^{-l_2 +3/2}}{t_2},\quad \
\alpha_3 = \frac{q^{- l_3 + 1/2}}{t_3}, \quad \
\beta_1 = \frac{q^{ (-h_1+h_2+h_3-l_1-l_2-l_3)/2 +1}}{ t_1}, \nonumber
\\
& \beta_2 = \frac{q^{ (h_1-h_2+h_3-l_1-l_2-l_3)/2 +1}}{ t_2}, \qquad
\beta_3 = \frac{q^{ (h_1+h_2-h_3-l_1-l_2-l_3)/2 +1}}{ t_3} .
\end{align*}
Then $g(x)$ satisfies the variant of $q$-hypergeometric equation of degree three given in equation~\eqref{eq:varqhgdeg3}.
Note that $\beta_j = q^{\lambda -h_j +1/2} /t_j$, $j=1,2,3$, where $q^{\lambda} = q^{ (h_1+h_2+h_3-l_1-l_2-l_3 +1 )/2} $.
\end{Proposition}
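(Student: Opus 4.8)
The plan is to follow the template of the degree-two case treated in Proposition~\ref{prop:paramrel}. Equation~\eqref{eq:g1eqdeg3} already has the three-term shape of equation~\eqref{eq:varqhgdeg3}, and in fact coincides with it in the special case $\alpha=0$; so the whole content is to absorb the exponent $\alpha$ by the scaling $\bar{g}_1(x)=x^{\alpha}g(x)$ and then to check that, under the parameter dictionary in the statement, the three coefficient polynomials of~\eqref{eq:g1eqdeg3} turn into those of~\eqref{eq:varqhgdeg3}.

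First I would substitute $\bar{g}_1(x)=x^{\alpha}g(x)$ into~\eqref{eq:g1eqdeg3}, using $\bar{g}_1(x/q)=q^{-\alpha}x^{\alpha}g(x/q)$ and $\bar{g}_1(qx)=q^{\alpha}x^{\alpha}g(qx)$, then divide the (homogeneous) equation by $x^{\alpha}$ and multiply it by $q^{\alpha}$. The $q^{-\alpha}$ coming from $\bar{g}_1(x/q)$ is cancelled, so the coefficient of $g(x/q)$ becomes the bare product of the three factors $(x-q\beta_i\beta_j/(\alpha_1\alpha_2\alpha_3))$; the $q^{\alpha}$ coming from $\bar{g}_1(qx)$ combines with the $q$ already present and with the overall $q^{\alpha}$ to give exactly $q^{2\alpha+1}$ in front of $(x-1/\alpha_1)(x-q/\alpha_2)(x-1/\alpha_3)$; and the coefficient of $g(x)$ acquires precisely the overall factor $q^{\alpha}$ that stands in front of the cubic bracket of~\eqref{eq:varqhgdeg3}.

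Next I would match the two outer coefficients. For the $g(qx)$ term, the identities $1/\alpha_1=q^{l_1-1/2}t_1$, $q/\alpha_2=q^{l_2-1/2}t_2$, $1/\alpha_3=q^{l_3-1/2}t_3$ are immediate from the definitions of $\alpha_1,\alpha_2,\alpha_3$, yielding $q^{2\alpha+1}\prod_j(x-q^{l_j-1/2}t_j)$. For the $g(x/q)$ term I would verify the key factorizations $q\beta_1\beta_2/(\alpha_1\alpha_2\alpha_3)=q^{h_3+1/2}t_3$ and cyclically $q\beta_2\beta_3/(\alpha_1\alpha_2\alpha_3)=q^{h_1+1/2}t_1$, $q\beta_3\beta_1/(\alpha_1\alpha_2\alpha_3)=q^{h_2+1/2}t_2$; each is obtained by adding the exponents of the two relevant $\beta$'s, whereupon the off-diagonal $h$'s cancel in pairs and the surviving exponent collapses to $h_k+1/2$. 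This reproduces $\prod_j(x-q^{h_j+1/2}t_j)$.

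The remaining, and most laborious, step is to match the cubic polynomial multiplying $\bar{g}_1(x)$ in~\eqref{eq:g1eqdeg3} with the bracket of~\eqref{eq:varqhgdeg3} coefficient by coefficient in $x$. The $x^3$ coefficients agree at once, and the constant terms agree after computing $q^2(1+q)\beta_1\beta_2\beta_3/(\alpha_1\alpha_2\alpha_3)^2=(q+1)q^{(h_1+h_2+h_3+l_1+l_2+l_3)/2}t_1t_2t_3$. For the $x^2$ coefficient the two symmetric sums separate cleanly: $q(1/\alpha_1+q/\alpha_2+1/\alpha_3)=q^{1/2}(q^{l_1}t_1+q^{l_2}t_2+q^{l_3}t_3)$, while $q(\beta_1\beta_2+\beta_2\beta_3+\beta_3\beta_1)/(\alpha_1\alpha_2\alpha_3)=q^{1/2}(q^{h_1}t_1+q^{h_2}t_2+q^{h_3}t_3)$ by the factorizations above, and their sum is exactly the $x^2$ bracket of~\eqref{eq:varqhgdeg3}. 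The $x^1$ coefficient is treated the same way, using $\alpha_1+\alpha_2/q+\alpha_3=q^{1/2}(q^{-l_1}/t_1+q^{-l_2}/t_2+q^{-l_3}/t_3)$ together with $\beta_j=q^{\lambda-h_j+1/2}/t_j$ and $q^{\lambda}=q^{(h_1+h_2+h_3-l_1-l_2-l_3+1)/2}$: the $\sum_j\beta_j$ part contributes the $q^{-h_j}t_it_k$ terms and the $\beta_1\beta_2\beta_3(\alpha_1+\alpha_2/q+\alpha_3)/(\alpha_1\alpha_2\alpha_3)$ part the $q^{-l_j}t_it_k$ terms, assembling into the combinations $(q^{-h_j}+q^{-l_j})t_it_k$ demanded by~\eqref{eq:varqhgdeg3}. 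I expect this final bookkeeping to be the only genuine obstacle, but it is purely mechanical once the parameter dictionary is in hand.
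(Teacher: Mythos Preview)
Your proposal is correct and is exactly what the paper has in mind: the paper states that the proposition is obtained ``directly'' and gives no further argument, meaning precisely the routine substitution $\bar{g}_1(x)=x^{\alpha}g(x)$ followed by the coefficient-by-coefficient identification you outline. All of the parameter identities you check (the roots $1/\alpha_1,\,q/\alpha_2,\,1/\alpha_3$, the products $q\beta_i\beta_j/(\alpha_1\alpha_2\alpha_3)$, and the four coefficients of the middle cubic) are right.
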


We obtained the formal $q$-difference equation for $\bar{g}_1(x)$ corresponding to the $q$-middle convolution for the tuples of the matrices in equation~\eqref{eq:g1eqdeg3}.
In the appendix, we discuss the convergence of the $q$-integral representations obtained by the $q$-middle convolution and the actual $q$-difference equations which the $q$-integral representation satisfy.
In conclusion, we obtain the following theorems.

\begin{Theorem} \label{thm:deg3-1}
Set $\lambda = (h_1+h_2+h_3-l_1-l_2-l_3 +1 )/2 $.
\begin{enumerate}
 \item [$(i)$] Let $(i,i',i'')$ be a permutation of $(1,2,3)$.
The function
\begin{align*}
 g_i^{\langle 1 \rangle} (x) &= x^{-\alpha} q^{l_i - 1/2} t_i \frac{\big(q^{\lambda + l_i + 1/2} t_i / x, q^{l_i -l_{i'} +1} t_i /t_{i'}, q^{l_i - l_{i''} + 1} t_i /t_{i''}, q;q\big)_{\infty}}{\big(q^{ l_i + 1/2} t_i / x, q^{\lambda +l_i -h_i}, q^{\lambda + l_i -h_{i'}} t_i /t_{i'}, q^{\lambda + l_i -h_{i''}} t_i /t_{i''} ;q\big)_{\infty}}
\\
& \hphantom{=}\times {}_4\phi_3 \biggl(\!\! \begin{array}{c} q^{ l_i + 1/2} t_i / x, q^{\lambda + l_i -h_i}, q^{\lambda + l_i -h_{i'}} t_i /t_{i'}, q^{\lambda + l_i -h_{i''}} t_i /t_{i''} \\ q^{\lambda + l_i + 1/2} t_i /x, q^{ l_i -l_{i'} +1} t_i /t_{i'}, q^{ l_i - l_{i''} + 1} t_i /t_{i''} \end{array};q,q \biggr)
\end{align*}
satisfies the non-homogeneous version of the variant of $q$-hypergeometric equation of degree three given by
\begin{align}
& \big(x-q^{h_1 +1/2} t_1\big) \big(x- q^{h_2 +1/2} t_2\big) \big(x- q^{h_3 +1/2} t_3\big) g^{\langle 1 \rangle} (x/q) \nonumber\\
& + q^{2\alpha +1} \big(x - q^{l_1-1/2}t_1 \big) \big(x - q^{l_2 -1/2} t_2\big) \big(x - q^{l_3 -1/2} t_3\big) g^{\langle 1 \rangle} (qx) \nonumber \\
& + q^{\alpha} \bigl[ - (q + 1 ) x^3 + q^{1/2} \{ (q^{h_1} + q^{l_1})t_1 + (q^{h_2} + q^{l_2})t_2 + (q^{h_3} + q^{l_3})t_3 \} x^2 \nonumber\\
& \qquad - q^{(h_1+h_2+h_3+l_1+l_2+l_3 +1)/2} \big\{ \big(q^{- h_1}+q^{-l_1}\big)t_2 t_3 + \big(q^{- h_2}+ q^{- l_2}\big) t_1 t_3 \nonumber \\
& \qquad + \big(q^{- h_3}+ q^{- l_3}\big) t_1 t_2 \big\} x + q^{(h_1 +h_2 + h_3 + l_1 + l_2 + l_3 )/2} ( q + 1 ) t_1 t_2t_3 \bigr] g^{\langle 1 \rangle} (x) \nonumber \\
& - q^{\alpha +l_1 +l_2 +l_3 -1/2} t_1 t_2 t_3 \big(1-q^{\lambda}\big) x^{1-\alpha} =0 .
\label{eq:varqhgdeg3nonhom1}
\end{align}

\item[$(ii)$] The function
\begin{align*}
 g^{\langle 1 \rangle}_0 (x) &= x^{1-\alpha} q^{-\lambda} \frac{\big( x q^{-\lambda - l_1 + 3/2} /t_1, x q^{-\lambda -l_2 +3/2} /t_2, x q^{-\lambda - l_3 + 3/2} /t_3, q ;q\big)_{\infty}}{\big(x q^{-h_1 +1/2} /t_1, x q^{ -h_2 +1/2} /t_2, x q^{ -h_3 +1/2} /t_3, q^{-\lambda+1} ;q\big)_{\infty}} \nonumber
 \\
& \hphantom{=}\times {}_4\phi_3 \biggl(\!\! \begin{array}{c} x q^{-h_1 +1/2} /t_1, x q^{-h_2 +1/2} /t_2, x q^{ -h_3 +1/2} /t_3, q^{-\lambda+1} \\
 x q^{-\lambda - l_1 + 3/2} /t_1, x q^{-\lambda -l_2 +3/2} /t_2, x q^{-\lambda - l_3 + 3/2}/t_3 \end{array};q,q\biggr)
\end{align*}
also satisfies equation~\eqref{eq:varqhgdeg3nonhom1}.
\end{enumerate}
\end{Theorem}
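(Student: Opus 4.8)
The plan is to follow, mutatis mutandis, the route that produced Theorem~\ref{thm:deg2-2} in the degree-two case, now applied to the degree-three data~\eqref{eq:gqxBxgxqJPN3} under the condition~\eqref{eq:condmula}, for which both $\dim\mathcal{K}=1$ and $\dim\mathcal{L}=1$. Since~\eqref{eq:condmula} forces $q^{\lambda}\alpha_1\alpha_2\alpha_3/(\beta_1\beta_2\beta_3)=1$, the convergence hypotheses of Theorem~\ref{thm:qcint} fail and I would not use the convergent $q$-integral directly, but return to the finite-sum transform. First I would apply Proposition~\ref{prop:convKL} with $N=3$ to $y(x)=x^{\mu}(\alpha_1 x,\alpha_2 x,\alpha_3 x;q)_{\infty}/(\beta_1 x,\beta_2 x,\beta_3 x;q)_{\infty}$ with $\mu=0$, producing a system for $\widehat{Y}^{[K,L]}(x)$ that carries the boundary term $Q^{[K,L]}(x)=P_{\lambda}\big(x,q^{K-1}\xi\big)y\big(q^{K}\xi\big)-P_{\lambda}\big(x,q^{L}\xi\big)y\big(q^{L+1}\xi\big)$, exactly as in the passage leading to equation~\eqref{eq:g1KL}.

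Next I would pass to quotient coordinates via $\check{Y}^{[K,L]}(x)=P^{-1}\widehat{Y}^{[K,L]}(x)$ with $P$ as in~\eqref{eq:P}, so that the first two components $\bar{g}_1^{[K,L]}$, $\bar{g}_2^{[K,L]}$ solve the restricted $2\times 2$ system recorded just before equation~\eqref{eq:g1eqdeg3}, augmented by a non-homogeneous term linear in $Q^{[K,L]}(x)$ and $Q^{[K,L]}(x/q)$. Eliminating $\bar{g}_2^{[K,L]}$ by Proposition~\ref{prop:g1g2single} then yields a scalar third-order $q$-difference equation for $\bar{g}_1^{[K,L]}(x)$ whose homogeneous part is precisely equation~\eqref{eq:g1eqdeg3} and whose inhomogeneity is a fixed combination of $Q^{[K,L]}(x/q)$ and $Q^{[K,L]}(x)$ with polynomial coefficients in $x$. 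I would then specialize $\xi$ to $1/\alpha_i$ (part~(i), $i=1,2,3$) and to $q^{-\lambda}x$ (part~(ii)). For each such $\xi$ a numerator $q$-Pochhammer factor of the summand acquires an argument that is a non-positive integral power of $q$ and hence vanishes for all sufficiently negative indices, so the $K$-boundary contribution to $Q^{[K,L]}$ drops out identically, the bilateral sum truncates to a one-sided sum, and $\bar{g}_1^{[K,L]}$ converges as $K\to-\infty$, $L\to+\infty$ to the displayed ${}_4\phi_3$ series, whose argument $q$ guarantees convergence since $0<|q|<1$.

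It then remains to identify the limiting inhomogeneity. Because the $K$-boundary has already dropped out and, as $L\to+\infty$, every factor of the $L$-boundary tends to $(0;q)_{\infty}=1$, the non-homogeneous term collapses in the limit to a single monomial in $x$; the general-$\xi$ version of this limit (in which neither boundary vanishes and one needs $|q^{\lambda}\alpha_1\alpha_2\alpha_3/(\beta_1\beta_2\beta_3)|<1$) would instead be handled by the theta-function identity $\vartheta_q(qax)/\vartheta_q(qbx)=(b/a)\,\vartheta_q(ax)/\vartheta_q(bx)$ used in~\eqref{eq:thetaq}. Finally I would invoke Proposition~\ref{prop:paramreldeg3}: setting $g(x)=x^{-\alpha}\bar{g}_1(x)$ (up to the overall constant absorbed into the displayed normalization) and substituting the parameter dictionary of that proposition turns equation~\eqref{eq:g1eqdeg3} together with its monomial inhomogeneity into the non-homogeneous equation~\eqref{eq:varqhgdeg3nonhom1}, while rewriting the limiting ${}_4\phi_3$ series as $g_i^{\langle 1\rangle}(x)$ and $g_0^{\langle 1\rangle}(x)$.

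I expect the main obstacle to be the bookkeeping in the reduction and the identification of the inhomogeneity. The elimination by Proposition~\ref{prop:g1g2single} in the degree-three setting is the ``lengthy calculation'' already flagged for equations~\eqref{eq:g1q4} and~\eqref{eq:g1eqdeg3}, and it must now be carried out for the finite sums with the boundary term tracked through every step. More delicate is verifying that, after reparametrization, precisely the coefficient $-q^{\alpha+l_1+l_2+l_3-1/2}t_1 t_2 t_3\big(1-q^{\lambda}\big)$ of $x^{1-\alpha}$ survives, and that the \emph{same} inhomogeneity is produced by all three choices $\xi=1/\alpha_i$ as well as by $\xi=q^{-\lambda}x$, so that each of the four limiting series solves the one equation~\eqref{eq:varqhgdeg3nonhom1}. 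These computations are the ones deferred to the appendix; see also~\cite{Ar}.
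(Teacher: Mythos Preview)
Your proposal is correct and follows essentially the same route as the paper's proof in Appendix~B: apply Proposition~\ref{prop:convKL} with $\mu=0$, pass to the quotient coordinates via $P^{-1}$, eliminate $\bar g_2^{[K,L]}$ with Proposition~\ref{prop:g1g2single} to obtain equation~\eqref{eq:g1KL3} with the boundary term~\eqref{eq:nonhomKL3}, specialize $\xi\in\{1/\alpha_1,1/\alpha_2,1/\alpha_3,q^{-\lambda}x\}$ so the $K$-boundary vanishes and the inhomogeneity collapses to the single monomial of Proposition~\ref{prop:deg3nonhom1}, and finally reparametrize via Proposition~\ref{prop:paramreldeg3}. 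Two small slips to fix: the scalar equation for $\bar g_1^{[K,L]}$ is \emph{second}-order (three shifts $x/q,x,qx$), not third-order; and your parenthetical condition $|q^{\lambda}\alpha_1\alpha_2\alpha_3/(\beta_1\beta_2\beta_3)|<1$ can never hold here since~\eqref{eq:condmula} forces this quantity to equal $1$---the general-$\xi$ limit is handled purely by the theta identity, yielding Proposition~\ref{prop:g1nonhom}.
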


\begin{Theorem} \label{thm:deg3-2}
Set $\lambda = (h_1+h_2+h_3-l_1-l_2-l_3 +1 )/2 $.
\begin{enumerate}
\item [$(i)$] Let $(i,i',i'')$ be a permutation of $(1,2,3)$.
The function
\begin{align*}
 g_i ^{\langle 2 \rangle} (x) & = x^{\lambda - \alpha} \frac{q^{\lambda -h_i -1/2}}{t_i} \frac{\big(x q^{\lambda -h_i +1/2} /t_i, q^{ h_{i'} -h_i +1} t_{i'} /t_i, q^{h_{i''} -h_i +1} t_{i''} /t_i, q ; q\big)_{\infty}}{\big(x q^{ -h_i +1/2} /t_i, q^{\lambda + l_i -h_i}, q^{\lambda + l_{i'} -h_i} t_{i'} /t_i, q^{\lambda + l_{i''} -h_i}t_{i''} /t_i ; q\big)_{\infty}} \nonumber
 \\
& \hphantom{=}\times {}_4\phi_3 \biggl(\!\! \begin{array}{c} x q^{ -h_i +1/2} /t_i, q^{\lambda + l_i -h_i}, q^{\lambda + l_{i'} -h_i} t_{i'} /t_i, q^{\lambda + l_{i''} -h_i}t_{i''} /t_i \\
 x q^{\lambda -h_i +1/2} /t_i, q^{ h_{i'} -h_i +1} t_{i'} /t_i, q^{h_{i''} -h_i +1} t_{i''} /t_i \end{array}\! ;q,q \biggr)
\end{align*}
satisfies the non-homogeneous version of the variant of $q$-hypergeometric equation of degree three given by
\begin{align}
& \big(x-q^{h_1 +1/2} t_1\big) \big(x- q^{h_2 +1/2} t_2\big) \big(x- q^{h_3 +1/2} t_3\big) g^{\langle 2 \rangle}(x/q) \nonumber\\
& + q^{2\alpha +1} \big(x - q^{l_1-1/2}t_1 \big) \big(x - q^{l_2 -1/2} t_2\big) \big(x - q^{l_3 -1/2} t_3\big) g^{\langle 2 \rangle}(qx) \nonumber \\
& + q^{\alpha} \bigl[ - (q + 1 ) x^3 + q^{1/2} \big\{ \big(q^{h_1} + q^{l_1}\big)t_1 + \big(q^{h_2} + q^{l_2}\big)t_2 + \big(q^{h_3} + q^{l_3}\big)t_3 \big\} x^2 \nonumber\\
& \qquad - q^{(h_1+h_2+h_3+l_1+l_2+l_3 +1)/2} \big\{ \big(q^{- h_1}+q^{-l_1}\big)t_2 t_3 + \big(q^{- h_2}+ q^{- l_2}\big) t_1 t_3 \nonumber \\
& \qquad + \big(q^{- h_3}+ q^{- l_3}\big) t_1 t_2 \big\} x + q^{(h_1 +h_2 + h_3 + l_1 + l_2 + l_3 )/2} ( q + 1 ) t_1 t_2t_3 \bigr] g^{\langle 2 \rangle}(x) \nonumber \\
& + q^{\alpha} \big(1-q^{\lambda}\big) x^{\lambda - \alpha +2} = 0.
\label{eq:varqhgdeg3nonhom2}
\end{align}

\item[$(ii)$] The function
\begin{align*}
 g_0 ^{\langle 2 \rangle}(x) & = x^{\lambda - \alpha -1} \frac{\big( q^{-\lambda +h_1 +3/2} t_1 / x, q^{-\lambda +h_2 +3/2} t_2 / x, q^{-\lambda +h_3 +3/2} t_3 / x, q ; q\big)_{\infty}}{\big( q^{ l_1 + 1/2} t_1 / x,q^{l_2 +1/2} t_2 / x, q^{ l_3 + 1/2} t_3 / x, q^{1-\lambda} ; q\big)_{\infty}} \nonumber
 \\
& \hphantom{=}\times {}_4 \phi_3 \biggl(\!\! \begin{array}{c} q^{ l_1 + 1/2} t_1 / x,q^{l_2 +1/2} t_2 / x, q^{ l_3 + 1/2} t_3 / x, q^{1-\lambda} \\
 q^{-\lambda +h_1 +3/2} t_1 / x, q^{-\lambda +h_2 +3/2} t_2 / x, q^{-\lambda +h_3 +3/2} t_3 / x \end{array}\! ;q,q \biggr)
\end{align*}
also satisfies equation~\eqref{eq:varqhgdeg3nonhom2}.
\end{enumerate}
\end{Theorem}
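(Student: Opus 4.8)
The plan is to run, for degree three, exactly the boundary-term analysis that produced Proposition~\ref{prop:y1nonhom} in the degree-two case, but now starting from the alternative kernel of the Remark following Corollary~\ref{cor:qcint}. Concretely, I would take $P_{\lambda}(x,s)=(x/s)^{\lambda}(x/s;q)_{\infty}/(q^{-\lambda}x/s;q)_{\infty}$ together with $y(s)=s^{\mu'}(q/(\beta_1 s),q/(\beta_2 s),q/(\beta_3 s);q)_{\infty}/(q/(\alpha_1 s),q/(\alpha_2 s),q/(\alpha_3 s);q)_{\infty}$, where condition \eqref{eq:condmula} forces $q^{\mu'}=\beta_1\beta_2\beta_3/(\alpha_1\alpha_2\alpha_3)=q^{\lambda}$, i.e.\ $\mu'=\lambda$. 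Since $\mu=0$, Theorem~\ref{thm:qcint} does not apply, so I would stay at the level of the truncated sums and use Proposition~\ref{prop:convKL}. The crucial structural point is that the middle-convolution change of basis $P$ in \eqref{eq:P} gives, for its first output component, the difference $\bar{g}_1=\widehat{y}_1-\widehat{y}_3$.

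That difference is what makes the argument work. Using $(A;q)_{\infty}=(1-A)(qA;q)_{\infty}$ on the two factors $(q^{-n}/(\alpha_1\xi);q)_{\infty}$ and $(q^{-n}/(\alpha_3\xi);q)_{\infty}$ appearing in the respective denominators, the bracket $\widehat{y}_1-\widehat{y}_3$ telescopes to a single series whose summand carries an extra explicit factor $q^{-n}(\alpha_1^{-1}-\alpha_3^{-1})$. I would then specialise $\xi$: at $\xi=1/\beta_i$ (three cases, giving part~(i)) or $\xi=x$ (giving part~(ii)) a numerator Pochhammer factor vanishes on one side, so the bilateral sum collapses to a convergent unilateral series. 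Reading off parameters shows this series is exactly the ${}_4\phi_3(\,\cdot\,;q,q)$ of the theorem; the base $q$ arises precisely from the extra $q^{-n}$ produced by the difference, which shifts the term ratio from the boundary value $1$ to $q$. Passing from $(\alpha_j,\beta_j,\lambda)$ to $(h_j,l_j,t_j,\alpha)$ by Proposition~\ref{prop:paramreldeg3} and setting $g(x)=x^{-\alpha}\bar{g}_1(x)$ (up to the displayed constants) then matches the prefactors and arguments of $g_i^{\langle 2\rangle}$ and $g_0^{\langle 2\rangle}$.

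To get the equation itself, I would apply Proposition~\ref{prop:g1g2single} to the reduced $2\times2$ system coming from \eqref{eq:claPN3}; for the truncated functions this reproduces \eqref{eq:g1eqdeg3} together with an inhomogeneous term assembled from the boundary contribution $Q^{[K,L]}(x)$, in the same way the degree-two equation acquired boundary terms. Letting $K\to-\infty$ and $L\to+\infty$, the $L$-term dies because the $\xi$-special Pochhammer factor is identically zero, while the $K$-term survives: its geometric factor is $\big(\beta_1\beta_2\beta_3\,q^{-\lambda}/(\alpha_1\alpha_2\alpha_3)\big)^{K}$, which by \eqref{eq:condmula} equals $1$ for every $K$. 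Evaluating the leftover constant by the theta-function manipulation used in \eqref{eq:thetaq} yields the monomial inhomogeneity $q^{\alpha}(1-q^{\lambda})x^{\lambda-\alpha+2}$, hence \eqref{eq:varqhgdeg3nonhom2}. (The factorisation of the fourth-order equation \eqref{eq:g1q4} under \eqref{eq:condmula}, with a first-order factor annihilating that monomial, gives an independent consistency check.)

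The main obstacle is not the series identification but this last limit. In degree two the analogous boundary factor decayed to $0$ under the strict inequality $|q^{\lambda}\alpha_1\alpha_2/(\beta_1\beta_2)|<1$, so the equation stayed homogeneous; in degree three condition \eqref{eq:condmula} places us exactly on the boundary $|q^{\lambda}\alpha_1\alpha_2\alpha_3/(\beta_1\beta_2\beta_3)|=1$, where the naive bilateral series diverges and the surviving boundary term is a genuine nonzero constant. Making the unilateral collapse legitimate on this borderline and computing the residual theta-quotient is the delicate step; it is exactly the refinement of convergence the paper emphasises, and I expect it to occupy the bulk of the appendix argument.
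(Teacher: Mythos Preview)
Your proposal is correct and follows essentially the same route as the paper's appendix proof: alternative kernel, $\bar g_1=\widehat y_1-\widehat y_3$, Proposition~\ref{prop:g1g2single} on the reduced $2\times2$ block, specialisations $\xi=1/\beta_i$ and $\xi=x$, then Proposition~\ref{prop:paramreldeg3}. One small refinement: with the alternative kernel and $\mu'=\lambda$ the power prefactor $(q^K\xi)^{\mu'-\lambda}$ is identically $1$, and the remaining $K$-side Pochhammer quotient tends to $1$ directly because all its arguments $q^{1-K}/\xi,\dots$ go to $0$ as $K\to-\infty$; no theta-function rewriting is needed here (that device is used for the \emph{original} kernel in Proposition~\ref{prop:g1nonhom}, not for this half).
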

By taking the difference of two solutions of the non-homogeneous equation, we obtain a~solution of the homogeneous equation.
\begin{Corollary}\quad
\begin{enumerate}
\item [$(i)$] Let $g^{\langle 1 \rangle}_i (x)$, $i=0,1,2,3$, be the functions in Theorem~$\ref{thm:deg3-1}$.
For each $i,j \in \{ 0,1,2,3 \}$, the function $g^{\langle 1 \rangle}_i (x) - g^{\langle 1 \rangle}_j (x) $ satisfies the variant of the $q$-hypergeometric equation of degree three given in equation~\eqref{eq:varqhgdeg3}.

\item [$(ii)$] Let $g^{\langle 2 \rangle}_i (x)$, $i=0,1,2,3$, be the functions in Theorem~$\ref{thm:deg3-2}$.
For each $i,j \in \{ 0,1,2,3 \}$, the function $g^{\langle 2 \rangle}_i (x) - g^{\langle 2 \rangle}_j (x) $ also satisfies in equation~\eqref{eq:varqhgdeg3}.
\end{enumerate}
\end{Corollary}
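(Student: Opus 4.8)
The plan is to exploit the linearity of the three-term $q$-difference operator on the left-hand side of equation~\eqref{eq:varqhgdeg3}. Write $L[g]$ for this left-hand side, so that equation~\eqref{eq:varqhgdeg3} reads $L[g]=0$; since $L$ depends on $g$ only through the evaluations $g(x/q)$, $g(x)$ and $g(qx)$ with coefficients independent of $g$, we have $L[g-\tilde g]=L[g]-L[\tilde g]$ for any functions $g$ and $\tilde g$.

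For part $(i)$, Theorem~\ref{thm:deg3-1} states that each of the four functions $g^{\langle 1 \rangle}_i(x)$, $i=0,1,2,3$, satisfies equation~\eqref{eq:varqhgdeg3nonhom1}, which has the form $L[g^{\langle 1 \rangle}] = q^{\alpha +l_1 +l_2 +l_3 -1/2} t_1 t_2 t_3 \big(1-q^{\lambda}\big) x^{1-\alpha}$. The right-hand side here carries no dependence on the index $i$, so $L[g^{\langle 1 \rangle}_i]$ is the same function of $x$ for all $i$. Subtracting the equations for indices $i$ and $j$ and using linearity yields $L\big[g^{\langle 1 \rangle}_i - g^{\langle 1 \rangle}_j\big]=0$, which is exactly the assertion that the difference solves the homogeneous equation~\eqref{eq:varqhgdeg3}.

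For part $(ii)$, the reasoning is identical, with equation~\eqref{eq:varqhgdeg3nonhom2} replacing equation~\eqref{eq:varqhgdeg3nonhom1}: Theorem~\ref{thm:deg3-2} gives $L[g^{\langle 2 \rangle}_i] = -q^{\alpha}\big(1-q^{\lambda}\big) x^{\lambda - \alpha +2}$ for every $i$, again with a right-hand side independent of $i$, so $L\big[g^{\langle 2 \rangle}_i - g^{\langle 2 \rangle}_j\big]=0$ and the difference satisfies equation~\eqref{eq:varqhgdeg3}.

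There is no real obstacle here: the entire content is the observation that the inhomogeneous terms in~\eqref{eq:varqhgdeg3nonhom1} and~\eqref{eq:varqhgdeg3nonhom2} do not depend on the index $i$, so they cancel under subtraction, after which linearity of $L$ finishes the argument. The only point worth checking is that the signs and constants in those inhomogeneous terms are genuinely index-free, which is manifest from the statements of Theorems~\ref{thm:deg3-1} and~\ref{thm:deg3-2}, where the last line of each displayed equation involves neither $i$ nor any index-dependent quantity.
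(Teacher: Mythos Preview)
Your argument is correct and matches the paper's own reasoning: the paper simply remarks, immediately before stating the corollary, that ``By taking the difference of two solutions of the non-homogeneous equation, we obtain a solution of the homogeneous equation,'' and gives no further proof. Your write-up spells out the underlying linearity explicitly, which is exactly what is intended.
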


\section{Concluding remarks} 

In this paper, we reformulated the $q$-integral transformations associated with the $q$-convolution, which was previously formulated by Sakai and Yamaguchi~\cite{SY}.
Namely, we added the parameter~$\xi $ in the Jackson integral and we took care of the convergence of the $q$-integration.
As an application of our reformulation, we obtained $q$-integral representations of solutions to the variants of the $q$-hypergeometric equation by applying the $q$-middle convolution.

On the hypergeometric differential equation \eqref{eq:GaussHGE}, global behaviour of solutions had been studied well.
In particular, the $2\times 2$ matrix which connects the local solutions at $z=0$ with those at $z=\infty $ was calculated, and integral representations of solutions were applied for the calculation~\cite{WW}.
The connection matrix for the $q$-hypergeometric equation was also calculated~\cite{GR}.
We hope to study global behaviour of solutions to the variants of the $q$-hypergeometric equation.
For this purpose, we need to develop the local theory of linear $q$-difference equations at the point other than the origin and the infinity.

When the authors were preparing this manuscript, Fujii and Nobukawa presented the pre\-print~\cite{FN}.
They also investigated $q$-integral solutions to the variants of the $q$-hypergeometric equation.
In particular, they found solutions in terms of the very-well-poised-balanced $q$-hy\-per\-ge\-o\-met\-ric series ${}_8 W_7$.
Fujii also obtained several results on this direction in his master's thesis~\cite{Fuj}.
It seems that their results do not rely on the $q$-middle convolution, and it would be interesting to compare them with our ones for further study.

A referee pointed out that the results in Section~\ref{sec:qmc} will open a possibility to give a recursive formula of the connection coefficients, if a local solution is sent to a local solution of the $q$-middle convolution equation by choosing an appropriate value of $\xi $.

\appendix

\section{Supplement to Section~\ref{sec:deg2p2}}\allowdisplaybreaks

We provide additional discussions on the $q$-middle convolution related to the function $x^{\mu} (\alpha_1 x, \allowbreak\alpha_2 x;q)_{\infty}/(\beta_1 x, \beta_2 x;q)_{\infty} $ with the condition $q^{\lambda} = q^{\mu} \beta_1 \beta_2 /(\alpha_1 \alpha_2) $.
To begin with, we discuss a~relationship between equation~\eqref{eq:g1eqdeg2m} and the variant of $q$-hypergeometric equation of degree two.
Recall that the $q$-difference equation for $\bar{g}_1(x) $ given in equation~\eqref{eq:g1eqdeg2m} is
\begin{align*}
&\biggl(x-\frac{q^{\mu +1}\beta_1}{\alpha_1\alpha_2}\biggr)\biggl(x-\frac{q^{\mu +1}\beta_2}{\alpha_1\alpha_2}\biggr)\bar{g}_1(x/q) + q\biggl(x-\frac{1}{\alpha_1}\biggr)\biggl(x-\frac{q}{\alpha_2}\biggr)\bar{g}_1(qx) \nonumber
\\
&\qquad{} -\biggl\{(1+q)x^2-\biggl(q^{\mu+1}\frac{\beta_1+\beta_2}{\alpha_1\alpha_2}+\frac{q}{\alpha_1}+\frac{q^2}{\alpha_2}\biggr)x+ \biggl(1+\frac{\beta_1\beta_2}{\alpha_1\alpha_2}\biggr)\frac{q^{\mu+2}}{\alpha_1\alpha_2}\biggr\}\bar{g}_1(x)=0 .
\end{align*}
Set $x=1/z$, and write $\bar{g}_1(x) = x^{\mu}f(1/x)$.
Then $\bar{g}_1(x)=z^{-\mu}f(z)$, $\bar{g}_1(qx)= q ^{\mu} z^{-\mu}f(z/q)$, $\bar{g}_1(x/q)= q ^{-\mu} z^{-\mu}f(qz)$, and we have
\begin{align}
& ( z-\alpha_1 ) \biggl( z-\frac{\alpha_2}{q} \biggr) f(z/q) +\frac{\beta_1\beta_2}{\alpha_1\alpha_2}\biggl(z-q^{-\mu-1}\frac{\alpha_1\alpha_2}{\beta_1}\biggr)\biggl(z-q^{-\mu-1}\frac{\alpha_1\alpha_2}{\beta_2}\biggr)f(qz) \label{eq:varqhgd2z}
\\
&-\biggl\{ \!\biggl( 1+\frac{\beta_1\beta_2}{\alpha_1\alpha_2} \biggr) z^2-\big(q^{-1}(\beta_1+\beta_2)+q^{-\mu}\alpha_1+q^{-\mu-1}\alpha_2\big)z
+q^{-\mu-2}(q+1)\alpha_1\alpha_2\!\biggl\}f(z)=0.\nonumber
\end{align}
This equation is a special case of the variant of $q$-hypergeometric equation of degree two, which was given in equation~\eqref{eq:varqhgdeg2}, with the constraint $k_1=0$.
However, equation~\eqref{eq:varqhgd2z} recovers the variant of $q$-hypergeometric equation of degree two by setting $g(z)=z^{-k_1}f(z)$.
Namely, we obtain the following proposition directly.
\begin{Proposition} \label{prop:d2zparam}
Assume that $f(z)$ satisfies equation~\eqref{eq:varqhgd2z}. Set $g(x)=x^{-k_1}f(x)$ and
\begin{align*}
&\alpha_1=q^{h_1+1/2}t_1, \qquad
\alpha_2=q^{h_2+3/2}t_2, \qquad
q^{\lambda}=q^{(h_1+h_2-l_1-l_2-k_1+k_2+1)/2},
\\
&\beta_1=q^{(h_1+h_2+l_1-l_2-k_1+k_2)/2+1}t_1, \qquad
\beta_2=q^{(h_1+h_2-l_1+l_2-k_1+k_2)/2+1}t_2.
\end{align*}
Then $g(x)$ satisfies the variant of $q$-hypergeometric equation of degree two given in equation~\eqref{eq:varqhgdeg2}.
Note that $\beta_1=q^{\lambda+l_1+1/2}t_1$, $\beta_2=q^{\lambda+l_2+1/2}t_2$ and $q^{\mu}=q^{\lambda}\alpha_1\alpha_2/(\beta_1\beta_2)=q^{\lambda+k_1-k_2}$.
\end{Proposition}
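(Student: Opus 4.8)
The plan is to verify the assertion by a direct substitution, taking advantage of the observation already recorded in the text that equation~\eqref{eq:varqhgd2z} is nothing but the variant~\eqref{eq:varqhgdeg2} in the special case $k_1 = 0$; the general case will be reached through the scaling $g(x) = x^{-k_1} f(x)$. First I would write equation~\eqref{eq:varqhgd2z} in the variable $x$ and substitute $f(x) = x^{k_1} g(x)$, so that $f(x/q) = q^{-k_1} x^{k_1} g(x/q)$ and $f(qx) = q^{k_1} x^{k_1} g(qx)$. Dividing the whole equation by $x^{k_1}$ and multiplying by $q^{k_1}$ restores a monic quadratic in front of $g(x/q)$ and shifts a factor $q^{2k_1}$ onto the $g(qx)$ term; this gauge factor is what carries the $k_1$-dependence into the equation.

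Next I would insert the parameter dictionary and reduce everything to the two relations recorded in the statement, $\beta_j = q^{\lambda + l_j + 1/2} t_j$ for $j = 1,2$ and $q^{\mu} = q^{\lambda} \alpha_1 \alpha_2 /(\beta_1 \beta_2) = q^{\lambda + k_1 - k_2}$, whence the key simplification $\beta_1 \beta_2 /(\alpha_1 \alpha_2) = q^{k_2 - k_1}$. Since $\alpha_1 = q^{h_1 + 1/2} t_1$ and $\alpha_2 / q = q^{h_2 + 1/2} t_2$, the coefficient of $g(x/q)$ is immediately $(x - q^{h_1 + 1/2} t_1)(x - q^{h_2 + 1/2} t_2)$, as desired. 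I would then evaluate the two roots of the quadratic multiplying $g(qx)$; a short exponent count gives $q^{-\mu - 1} \alpha_1 \alpha_2 / \beta_1 = q^{l_2 - 1/2} t_2$ and $q^{-\mu - 1} \alpha_1 \alpha_2 / \beta_2 = q^{l_1 - 1/2} t_1$, so that this coefficient becomes $q^{2 k_1} (\beta_1 \beta_2 /(\alpha_1 \alpha_2))(x - q^{l_1 - 1/2} t_1)(x - q^{l_2 - 1/2} t_2) = q^{k_1 + k_2}(x - q^{l_1 - 1/2} t_1)(x - q^{l_2 - 1/2} t_2)$, matching~\eqref{eq:varqhgdeg2}.

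The remaining and most laborious step is matching the coefficient of $g(x)$, a quadratic in $x$. Its $x^2$ part is immediate, $q^{k_1}(1 + \beta_1 \beta_2 /(\alpha_1 \alpha_2)) = q^{k_1} + q^{k_2}$, and for the constant part I would substitute $\alpha_1 \alpha_2 = q^{h_1 + h_2 + 2} t_1 t_2$ and collect powers of $q$ to check $q^{k_1 - \mu - 2}(q + 1) \alpha_1 \alpha_2 = p (q^{1/2} + q^{-1/2}) t_1 t_2$ with $p = q^{(h_1 + h_2 + l_1 + l_2 + k_1 + k_2)/2}$. The genuine obstacle is the linear coefficient $E$: one must expand $q^{k_1}(q^{-1}(\beta_1 + \beta_2) + q^{-\mu} \alpha_1 + q^{-\mu - 1} \alpha_2)$ into four monomials and verify, through four separate half-integer exponent computations, that they equal $p\, q^{-l_2} t_1$, $p\, q^{-l_1} t_2$, $p\, q^{-h_2} t_1$ and $p\, q^{-h_1} t_2$, whose sum is exactly $-E$. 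This exponent bookkeeping is the only delicate part; once the three coefficients are matched, the transformed equation is precisely~\eqref{eq:varqhgdeg2}, which proves the proposition.
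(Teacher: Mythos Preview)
Your proposal is correct and follows exactly the approach the paper indicates: the paper states only that the proposition is obtained ``directly,'' and what you have written is precisely that direct substitution and exponent-matching spelled out in full. All of your intermediate identifications (the roots $q^{-\mu-1}\alpha_1\alpha_2/\beta_j = q^{l_{j'}-1/2}t_{j'}$, the relation $\beta_1\beta_2/(\alpha_1\alpha_2)=q^{k_2-k_1}$, and the four-term decomposition of the linear coefficient into $p\,q^{-l_2}t_1$, $p\,q^{-l_1}t_2$, $p\,q^{-h_2}t_1$, $p\,q^{-h_1}t_2$) check out.
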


We can also discuss the convergence of the $q$-integral representation obtained by the $q$-middle convolution and the actual $q$-difference equation which the $q$-integral representation satisfies as the case $\mu =0$ in Section~\ref{sec:mu0}.
We apply Proposition~\ref{prop:convKL} for the case $y(x)= x^{\mu}(\alpha_1 x, \alpha_2 x;q)_{\infty}/(\beta_1 x, \beta_2 x;q)_{\infty}$.
Then the functions
\begin{align}\label{eq:hyint2m}
\begin{split}
& \widehat{y}_{i}^{[K,L]}(x) = (1-q) \sum_{n=K}^{L} s^{\mu+1} \frac{P_{\lambda}(x, s)}{s-b_{i}} \frac{(\alpha_1 s, \alpha_2 s;q)_{\infty}}{(\beta_1 s, \beta_2 s;q)_{\infty}} \bigg|_{s= q^n \xi}, \qquad i=0,1,2,
\\
& b_0=0, \qquad b_1 = 1/\alpha_1, \qquad b_2 = 1/\alpha_2
\end{split}
\end{align}
satisfy
\begin{align*}
& \begin{pmatrix} \widehat{y}_0^{\,[K,L]}(qx) \\[1mm] \widehat{y}_1^{\,[K,L]}(qx) \\[1mm] \widehat{y}_2^{\,[K,L]}(qx) \end{pmatrix}
= H \begin{pmatrix} \widehat{y}_0^{\,[K,L]}(x) \\[1mm] \widehat{y}_1^{\,[K,L]}(x) \\[1mm] \widehat{y}_2^{\,[K,L]}(x) \end{pmatrix} + (1-q) Q^{[K,L]}(x) \begin{pmatrix} 1 \\[1mm] -\alpha_1 x /(1-\alpha_1 x) \\[1mm] -\alpha_2 x /(1-\alpha_2 x) \end{pmatrix}\!,
\end{align*}
where $H=F_{\infty}+ F_1/(1-\alpha_1 x ) + F_2/(1-\alpha_2 x)$ is the matrix determined in equation~\eqref{eq:Yqx33} with the condition $q^{\lambda} = q^{\mu} \beta_1\beta_2 /(\alpha_1\alpha_2) $, and
\begin{equation*}
Q^{[K,L]}(x)=P_{\lambda}\big(x, q^{K-1} \xi\big) y \big(q^{K} \xi\big) - P_{\lambda}\big(x, q^{L} \xi\big) y \big(q^{L+1} \xi\big).
\end{equation*}
Note that the convergence theorem in Theorem~\ref{thm:qcintqJH} for the functions $\widehat{y}_i^{\,[K,L]}(x)$, $i=0,1,2$, is not applicable by the condition $q^{\lambda}=q^{\mu} \beta_1\beta_2 /(\alpha_1\alpha_2 )$.
Write
\begin{align}
\begin{pmatrix}
 \bar{g}_1^{\,[K,L]}(x)\\[1mm]
 \bar{g}_2^{\,[K,L]}(x)\\[1mm]
 \bar{g}_3^{\,[K,L]}(x)
 \end{pmatrix}
=
P^{-1} \begin{pmatrix}
 \widehat{y}_0^{\,[K,L]}(x)\\[1mm]
 \widehat{y}_1^{\,[K,L]}(x)\\[1mm]
 \widehat{y}_2^{\,[K,L]}(x)
 \end{pmatrix}
=\begin{pmatrix}
 -1 & 1 & 0 \\
 -1 & 0 & 1 \\
 1 & 0 & 0
 \end{pmatrix}
 \begin{pmatrix}
 \widehat{y}_0^{\,[K,L]}(x)\\[1mm]
 \widehat{y}_1^{\,[K,L]}(x)\\[1mm]
 \widehat{y}_2^{\,[K,L]}(x)
 \end{pmatrix}\!,
\label{eq:gyPinv2}
\end{align}
where $P$ is the matrix determined in equation~\eqref{eq:P2}.
Then we have
\begin{align*}
\begin{pmatrix}
 \bar{g}_1^{\,[K,L]}(qx)\\[1mm]
 \bar{g}_2^{\,[K,L]}(qx)\\[1mm]
 \bar{g}_3^{\,[K,L]}(qx)
 \end{pmatrix}
= P^{-1}HP \begin{pmatrix}
 \bar{g}_1^{\,[K,L]}(x)\\[1mm]
 \bar{g}_2^{\,[K,L]}(x)\\[1mm]
 \bar{g}_3^{\,[K,L]}(x)
 \end{pmatrix}
- (1-q) Q^{[K,L]}(x) P^{-1} \begin{pmatrix}
 -1\\[1mm]
 \alpha_1 x /(1-\alpha_1 x)\\[1mm]
 \alpha_2 x /(1-\alpha_2 x)
\end{pmatrix}\!.
\end{align*}
The first two equations are written as
\begin{align*}
& \bar{g}_1^{\,[K,L]}(qx) = \frac{B_1+q^{\lambda}-\alpha_1 x}{1-\alpha_1 x}\bar{g}_1^{\,[K,L]}(x)
+ \frac{B_2}{1-\alpha_1 x}\bar{g}_2^{\,[K,L]}(x) - \frac{1-q}{1-\alpha_1 x} Q^{[K,L]}(x),
\\
&\bar{g}_2^{\,[K,L]}(qx) = \frac{B_1}{1-\alpha_2 x}\bar{g}_1^{\,[K,L]}(x)
+ \frac{B_2+q^{\lambda}-\alpha_2 x}{1-\alpha_2 x}\bar{g}_2^{\,[K,L]}(x) - \frac{1-q}{1-\alpha_2 x}Q^{[K,L]}(x).
\end{align*}
We apply Proposition~\ref{prop:g1g2single} to obtain the $q$-difference equation which the function $\bar{g}_1^{\,[K,L]}(x) $ satisfies.
Then we have
\begin{align}
& \bigg( x-\frac{q^{\mu+1}\beta_1}{\alpha_1\alpha_2} \bigg) \bigg( x-\frac{q^{\mu+1}\beta_2}{\alpha_1\alpha_2} \bigg) \bar{g}_1^{\,[K,L]}(x/q) + q \bigg( x-\frac{1}{\alpha_1} \bigg) \bigg( x-\frac{q}{\alpha_2} \bigg) \bar{g}_1^{\,[K,L]}(qx) \nonumber
\\
&\qquad{}- \bigg\{ (1+q)x^2 - \bigg( q^{\mu+1}\frac{\beta_1+\beta_2}{\alpha_1\alpha_2}+\frac{q}{\alpha_1}+\frac{q^2}{\alpha_2}\bigg) x + \bigg( 1+\frac{\beta_1\beta_2}{\alpha_1\alpha_2} \bigg) \frac{q^{\mu+2}}{\alpha_1\alpha_2} \bigg\} \bar{g}_1^{\,[K,L]}(x) \nonumber
\\
&\qquad{}+ \frac{q^2(1-q)}{\alpha_1\alpha_2} \bigg\{ \bigg( \frac{\alpha_2}{q}x-q^{\lambda} \bigg) Q^{[K,L]}(x/q)+ \bigg( 1-\frac{\alpha_2}{q}x \bigg) Q^{[K,L]}(x) \bigg\} = 0,
\label{eq:nhmdeg2}
\end{align}
and the non-homogeneous term is written as
\begin{align}
&\frac{q^2(1-q)}{\alpha_1\alpha_2} \bigg\{ \bigg( \frac{\alpha_2}{q}x-q^{\lambda} \bigg) Q^{[K,L]}(x/q) + \bigg( 1-\frac{\alpha_2}{q}x \bigg) Q^{[K,L]}(x) \bigg\} \nonumber
\\
&\qquad{}= \frac{q^2(1-q)(1-q^{\lambda})}{\alpha_1\alpha_2}
\biggl( (q^K\xi)^{\mu} \frac{(q^{\lambda+K+1}\xi/x, q^K\xi\alpha_1, q^{K-1}\xi\alpha_2 ;q)_{\infty}}{(q^K\xi/x, q^K\xi\beta_1, q^K\xi\beta_2 ;q)_{\infty}} \nonumber
\\
&\qquad\hphantom{= \frac{q^2(1-q)(1-q^{\lambda})}{\alpha_1\alpha_2}
\biggl(} -(q^{L+1}\xi)^{\mu} \frac{(q^{\lambda+L+2}\xi/x, q^{L+1}\xi\alpha_1, q^L\xi\alpha_2 ;q)_{\infty}}{(q^{L+1}\xi/x, q^{L+1}\xi\beta_1, q^{L+1}\xi\beta_2 ;q)_{\infty}} \biggr).
\label{eq:nhm2m}
\end{align}

On the other hand, it follows from equations~\eqref{eq:hyint2m}, \eqref{eq:gyPinv2} that the function $\bar{g}_1^{\,[K,L]}(x) $ is written as
\begin{align*}
&\bar{g}_1^{\,[K,L]}(x) = -\widehat{y}_0^{\,[K,L]}(x) + \widehat{y}_1^{\,[K,L]}(x)
\\
&\hphantom{\bar{g}_1^{\,[K,L]}(x)} = (1-q) \sum_{n=K}^{L} \frac{-s^{\mu}}{1-\alpha_1 s} \frac{(q^{\lambda+1} s /x,\alpha_1 s, \alpha_2 s;q)_{\infty}}{(q s /x,\beta_1 s, \beta_2 s;q)_{\infty}} \bigg|_{s= q^n \xi}
= (q-1)\sum_{n=K}^{L}c_n,
\\
&c_n = (q^n\xi)^{\mu} \frac{\big(q^{\lambda+n+1}\xi /x, q^{n+1} \xi\alpha_1, q^n \xi\alpha_2; q\big)_{\infty}}{\big(q^{n+1}\xi /x, q^n \xi\beta_1, q^n \xi\beta_2 ;q\big)_{\infty}} .
\end{align*}
Since $c_{n+1}/c_n \to q^{\mu}$, $n\to +\infty$, and $c_{-(n+1)}/c_{-n} \to q^{\lambda - \mu +1} \alpha_1 \alpha_2 /(\beta_1 \beta_2 ) = q$, $n\to +\infty$, the function $\bar{g}_1^{\,[K,L]}(x) $ converges as $K \to -\infty $ and $L \to +\infty $ under the condition $\mu > 0$.
Note that the functions $ \widehat{y}_0^{\,[K,L]}(x)$ and $\widehat{y}_1^{\,[K,L]}(x)$ do not converge as $K \to -\infty $.
Write
\begin{align}
& \bar{g}_1 (x)= \lim_{K \to -\infty \atop{ L \to +\infty}} \bar{g}_1^{\,[K,L]}(x)
 = (q-1) \sum_{n=-\infty}^{+\infty} (q^n\xi)^{\mu} \frac{\big(q^{\lambda+n+1}\xi /x, q^{n+1} \xi\alpha_1, q^n \xi\alpha_2; q\big)_{\infty}}{\big(q^{n+1}\xi /x, q^n \xi\beta_1, q^n \xi\beta_2 ;q\big)_{\infty}}.
\label{eq:bg1xi2m}
\end{align}
We investigate the limit of the non-homogeneous term in equation~\eqref{eq:nhm2m} as $K \to -\infty $ and $L \to +\infty $.
If $\mu > 0$, then
\begin{align*}
\lim_{L \to +\infty} \big(q^{L+1}\xi\big)^{\mu} \frac{\big(q^{\lambda+L+2}\xi/x, q^{L+1}\xi\alpha_1, q^L\xi\alpha_2 ;q\big)_{\infty}}{\big(q^{L+1}\xi/x, q^{L+1}\xi\beta_1, q^{L+1}\xi\beta_2 ;q\big)_{\infty}} = 0 .
\end{align*}
As equation~\eqref{eq:thetaq}, we have
\begin{align*}
&\big(q^K\xi\big)^{\mu} \frac{\big(q^{\lambda+K+1}\xi/x, q^K\xi\alpha_1, q^{K-1}\xi\alpha_2 ;q\big)_{\infty}}{\big(q^K\xi/x, q^K\xi\beta_1, q^K\xi\beta_2 ;q\big)_{\infty}}
= \big(q^K\xi\big)^{\mu} \bigg( q^{\lambda}\frac{\alpha_1\alpha_2}{\beta_1\beta_2} \bigg)^{-K}
\\
&\qquad{}\times
\frac{\vartheta_q\big(q^{\lambda+1}\xi/x\big) \vartheta_q(\xi\alpha_1) \vartheta_q\big(q^{-1}\xi\alpha_2\big)}{\vartheta_q(\xi/x) \vartheta_q(\xi\beta_1) \vartheta_q(\xi\beta_2)}
 \frac{\big(q^{1-K}x/\xi, q^{1-K}/(\xi\beta_1), q^{1-K}/(\xi\beta_2) ;q\big)_{\infty}}{\big(q^{-\lambda-K}\xi/x, q^{1-K}/(\xi\alpha_1), q^{2-K}/(\xi\alpha_2) ;q\big)_{\infty}} .
\end{align*}
It follows from the condition $q^{\lambda}=q^{\mu} \beta_1\beta_2/(\alpha_1\alpha_2)$ that
\begin{align*}
&\lim_{K \to -\infty} \big(q^K\xi\big)^{\mu} \frac{\big(q^{\lambda+K+1}\xi/x, q^K\xi\alpha_1, q^{K-1}\xi\alpha_2 ;q\big)_{\infty}}{\big(q^K\xi/x, q^K\xi\beta_1, q^K\xi\beta_2 ;q\big)_{\infty}}
= \xi^{\mu}\frac{\vartheta_q\big(q^{\lambda+1}\xi/x\big) \vartheta_q(\xi\alpha_1) \vartheta_q\big(q^{-1}\xi\alpha_2\big)}{\vartheta_q(\xi/x) \vartheta_q(\xi\beta_1) \vartheta_q(\xi\beta_2)}.
\end{align*}
Therefore, equation~\eqref{eq:nhm2m} tends to
\begin{equation*}
\frac{q^2(1-q)\big(1-q^{\lambda}\big)}{\alpha_1\alpha_2} \xi^{\mu}\frac{\vartheta_q\big(q^{\lambda+1}\xi/x\big) \vartheta_q(\xi\alpha_1) \vartheta_q\big(q^{-1}\xi\alpha_2\big)}{\vartheta_q(\xi/x) \vartheta_q(\xi\beta_1) \vartheta_q(\xi\beta_2)}
\end{equation*}
as $K \to -\infty $ and $L \to +\infty $. Hence we obtain the following proposition.

\begin{Proposition}
If $\mu > 0$, then the function $\bar{g}_1(x)$ in equation~\eqref{eq:bg1xi2m} satisfies
\begin{align}
& \bigg( x-\frac{q^{\mu+1}\beta_1}{\alpha_1\alpha_2} \bigg) \bigg( x-\frac{q^{\mu+1}\beta_2}{\alpha_1\alpha_2} \bigg) \bar{g}_1(x/q) + q \bigg( x-\frac{1}{\alpha_1} \bigg) \bigg( x-\frac{q}{\alpha_2} \bigg) \bar{g}_1(qx) \nonumber
\\
&\qquad{}- \bigg\{ (1+q)x^2 - \bigg( q^{\mu+1}\frac{\beta_1+\beta_2}{\alpha_1\alpha_2}+\frac{q}{\alpha_1}+\frac{q^2}{\alpha_2}\bigg) x + \bigg( 1+\frac{\beta_1\beta_2}{\alpha_1\alpha_2} \bigg) \frac{q^{\mu+2}}{\alpha_1\alpha_2} \bigg\} \bar{g}_1(x) \nonumber
\\
&\qquad{}+ \frac{q^2(1-q)\big(1-q^{\lambda}\big)}{\alpha_1\alpha_2} \xi^{\mu}\frac{\vartheta_q(q^{\lambda+1}\xi/x) \vartheta_q(\xi\alpha_1) \vartheta_q(q^{-1}\xi\alpha_2)}{\vartheta_q(\xi/x) \vartheta_q(\xi\beta_1) \vartheta_q(\xi\beta_2)} = 0,
\label{eq:nhmdeg2m}
\end{align}
where $q^{\lambda}=q^{\mu}\beta_1\beta_2/(\alpha_1\alpha_2)$.
\end{Proposition}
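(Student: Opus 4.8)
The plan is to obtain \eqref{eq:nhmdeg2m} as the $K\to-\infty$, $L\to+\infty$ limit of the finite inhomogeneous recurrence \eqref{eq:nhmdeg2}, which the truncated function $\bar{g}_1^{\,[K,L]}(x)$ satisfies. The recurrence \eqref{eq:nhmdeg2} itself is produced by feeding the truncated system \eqref{eq:hyint2m}--\eqref{eq:gyPinv2} (a consequence of Proposition~\ref{prop:convKL} after conjugation by the matrix $P$ of \eqref{eq:P2}) into Proposition~\ref{prop:g1g2single}; this part is purely algebraic and requires no limiting argument. Thus the whole problem reduces to two convergence statements: that $\bar{g}_1^{\,[K,L]}(q^{j}x)$ converges for $j=-1,0,1$, and that the boundary term \eqref{eq:nhm2m} converges, after which one passes to the limit termwise in \eqref{eq:nhmdeg2}.

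For the first statement I would apply the ratio test to the bilateral series \eqref{eq:bg1xi2m}. Writing $c_n = (q^n\xi)^{\mu}(q^{\lambda+n+1}\xi/x, q^{n+1}\xi\alpha_1, q^n\xi\alpha_2;q)_\infty/(q^{n+1}\xi/x, q^n\xi\beta_1, q^n\xi\beta_2;q)_\infty$, the forward ratio tends to $q^{\mu}$ as $n\to+\infty$, so the right tail converges precisely when $|q|^{\mu}<1$, i.e.\ when $\mu>0$; the backward ratio tends to $q^{\lambda-\mu+1}\alpha_1\alpha_2/(\beta_1\beta_2)$, which equals $q$ by the standing balancing relation $q^{\lambda}=q^{\mu}\beta_1\beta_2/(\alpha_1\alpha_2)$, so the left tail always converges. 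Hence $\bar{g}_1^{\,[K,L]}(x)$ converges for $\mu>0$, and the same estimate applied with $x$ replaced by $qx$ and $x/q$ handles the shifted arguments.

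The delicate part, and the one I expect to be the main obstacle, is the $K\to-\infty$ limit of the boundary term \eqref{eq:nhm2m}. The $L$-contribution is harmless: its prefactor $(q^{L+1}\xi)^{\mu}\to0$ as $L\to+\infty$ for $\mu>0$, while the $q$-shifted factorials tend to $1$. The $K$-contribution looks divergent at first sight, and here I would exploit the quasi-periodicity $\vartheta_q(qax)/\vartheta_q(qbx)=(b/a)\,\vartheta_q(ax)/\vartheta_q(bx)$ of $\vartheta_q(t)=(t,q/t,q;q)_\infty$ exactly as in \eqref{eq:thetaq}, rewriting the ratio of infinite products as a fixed theta-quotient times the explicit power $(q^K\xi)^{\mu}\bigl(q^{\lambda}\alpha_1\alpha_2/(\beta_1\beta_2)\bigr)^{-K}$ times a residual product of $q$-shifted factorials that converges to $1$. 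The crucial cancellation is that the balancing relation forces $q^{\lambda}\alpha_1\alpha_2/(\beta_1\beta_2)=q^{\mu}$, so the $K$-dependent power collapses to the constant $\xi^{\mu}$, and the entire $K$-contribution tends to the theta-quotient appearing in \eqref{eq:nhmdeg2m}. This is precisely the mechanism by which the degeneration $\dim\mathcal{L}\ge1$ makes itself felt: the vanishing hypotheses of Corollary~\ref{cor:qcint} fail, so instead of a homogeneous equation one retains a nonzero source term.

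Having secured both limits, I would substitute them into \eqref{eq:nhmdeg2}: the three homogeneous coefficients reproduce the bracketed polynomial of \eqref{eq:nhmdeg2m} acting on $\bar{g}_1$, while the boundary term converges to $q^2(1-q)(1-q^{\lambda})/(\alpha_1\alpha_2)$ times the theta-quotient. Sending $K\to-\infty$ and $L\to+\infty$ then yields \eqref{eq:nhmdeg2m} directly, completing the proof.
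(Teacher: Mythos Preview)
Your proposal is correct and follows essentially the same route as the paper: derive the finite recurrence \eqref{eq:nhmdeg2} from Proposition~\ref{prop:convKL} via conjugation by $P$ and Proposition~\ref{prop:g1g2single}, establish convergence of the bilateral series by the ratio test (forward ratio $q^{\mu}$, backward ratio $q$ under the balancing relation), and then compute the $K\to-\infty$ limit of the boundary term using the theta quasi-periodicity exactly as in \eqref{eq:thetaq}, with the balancing relation $q^{\lambda}\alpha_1\alpha_2/(\beta_1\beta_2)=q^{\mu}$ collapsing the $K$-dependent power to $\xi^{\mu}$. Nothing is missing; your emphasis on the failure of the hypotheses of Corollary~\ref{cor:qcint} as the source of the inhomogeneous term is also exactly the paper's point of view.
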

Equation~\eqref{eq:nhmdeg2m} is a non-homogeneous extension of equation~\eqref{eq:g1eqdeg2m}. If $\bar{g}_1(x)$ satisfies equation~\eqref{eq:nhmdeg2m}, then it also satisfies the third order difference equation which is obtained from
\begin{equation*}
 \check{Y}(qx)
= \biggl(P^{-1}F_{\infty}P+ \frac{P^{-1}F_1P}{1-\alpha_1 x}+ \frac{P^{-1}F_2P}{1-\alpha_2 x} \biggr) \check{Y}(x), \qquad
\check{Y}(x) = \begin{pmatrix} \bar{g}_1(x) \\ \bar{g}_2(x) \\ \bar{g}_3(x) \end{pmatrix} \!.
\end{equation*}

If $\xi = 1/\alpha_1$, $\xi = 1/\alpha_2$ or $\xi = q^{-\lambda}x$, then
\begin{equation*}
\big(q^K\xi\big)^{\mu} \frac{\big(q^{\lambda+K+1}\xi/x, q^K\xi\alpha_1, q^{K-1}\xi\alpha_2 ;q\big)_{\infty}}{\big(q^K\xi/x, q^K\xi\beta_1, q^K\xi\beta_2 ;q\big)_{\infty}} = 0
\end{equation*}
for any negative integer $K$, and equation~\eqref{eq:nhm2m} tends to $0$ as $K \to -\infty$ and $L \to +\infty$ under the condition $\mu > 0$.

We substitute $\xi = 1/\alpha_1$, $\xi = 1/\alpha_2$ or $\xi = q^{-\lambda}x$ in $\bar{g}_1(x)$. If $\xi = 1/\alpha_1$, then
\begin{align}
\bar{g}_1(x) &= (q\!-\!1)\alpha_1^{-\mu}\frac{\big(q^{\lambda+1}/(\alpha_1 x), q, \alpha_2/\alpha_1;q\big)_{\infty}}{\big(q/(\alpha_1 x), \beta_1/\alpha_1, \beta_2/\alpha_1 ;q\big)_{\infty}} \,{}_3\phi_2 \left(\!\! \begin{array}{c} q/(\alpha_1 x), \beta_1/\alpha_1, \beta_2/\alpha_1\\
 q^{\lambda+1}/(\alpha_1 x), \alpha_2/\alpha_1 \end{array}\! ;q,q^{\mu} \right)\! .
\label{eq:xial1mdeg2}
\end{align}
If $\xi = 1/\alpha_2$, then
\begin{align}
\bar{g}_1(x) ={}& (q-1)q^{\mu}\alpha_2^{-\mu}\nonumber
\\
&\times\frac{\big(q^{\lambda+2}/(\alpha_2 x), q^2\alpha_1/\alpha_2, q ;q\big)_{\infty}}{\big(q^2/(\alpha_2 x), q\beta_1/\alpha_2, q\beta_2/\alpha_2 ;q\big)_{\infty}} \,{}_3\phi_2 \left(\!\! \begin{array}{c} q^2/(\alpha_2 x), q\beta_1/\alpha_2, q\beta_2/\alpha_2\\
 q^{\lambda+2}/(\alpha_2 x), q^2\alpha_1/\alpha_2 \end{array}\! ;q,q^{\mu}\right) \!.
\label{eq:xial2mdeg2}
\end{align}
If $\xi=q^{-\lambda}x$, then
\begin{align}
\bar{g}_1(x) ={}& (q-1)q^{-\lambda\mu} x^{\mu}\nonumber
\\
&\times\frac{\big(q, q^{-\lambda+1}\alpha_1 x, q^{-\lambda}\alpha_2 x;q\big)_{\infty}}{\big(q^{-\lambda+1}, q^{-\lambda}\beta_1 x, q^{-\lambda}\beta_2 x;q\big)_{\infty}}\,{}_3\phi_2 \left(\!\! \begin{array}{c} q^{-\lambda+1}, q^{-\lambda}\beta_1 x, q^{-\lambda}\beta_2 x\\
 q^{-\lambda+1}\alpha_1 x, q^{-\lambda}\alpha_2 x \end{array}\! ;q,q^{\mu} \right)\!.
\label{eq:xilxmdeg2}
\end{align}
Since the non-homogeneous term in equation~\eqref{eq:nhmdeg2m} vanishes in this case, we have

\begin{Proposition} \label{prop:mdeg2homo}
If $\mu > 0$, then the functions in equations~\eqref{eq:xial1mdeg2}, \eqref{eq:xial2mdeg2}, \eqref{eq:xilxmdeg2} satisfy equation~\eqref{eq:g1eqdeg2m}.
\end{Proposition}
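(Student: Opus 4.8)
The plan is to recognize the three displayed functions as the specializations of the bilateral series $\bar g_1(x)$ from equation~\eqref{eq:bg1xi2m} at $\xi = 1/\alpha_1$, $\xi = 1/\alpha_2$ and $\xi = q^{-\lambda}x$, and then to exploit the fact that the inhomogeneous term of the governing equation~\eqref{eq:nhmdeg2m} degenerates to zero precisely for these values of $\xi$. Since the preceding Proposition already guarantees, for $\mu > 0$ and arbitrary $\xi$, that $\bar g_1(x)$ satisfies~\eqref{eq:nhmdeg2m}, the whole task reduces to checking that this inhomogeneous term vanishes and that $\bar g_1(x)$ coincides with the stated ${}_3\phi_2$ expressions.

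First I would examine the inhomogeneous term of~\eqref{eq:nhmdeg2m}, namely the factor $\vartheta_q\big(q^{\lambda+1}\xi/x\big)\,\vartheta_q(\xi\alpha_1)\,\vartheta_q\big(q^{-1}\xi\alpha_2\big)$. Because $\vartheta_q(t) = (t, q/t, q;q)_\infty$ carries both a factor $(t;q)_\infty$ and a factor $(q/t;q)_\infty$, it vanishes whenever $t \in q^{\Zint}$: if $t = q^{m}$ with $m \le 0$ then $(t;q)_\infty = 0$, and if $m \ge 1$ then $(q/t;q)_\infty = 0$. I would then substitute each of the three values: for $\xi = 1/\alpha_1$ one has $\xi\alpha_1 = 1$, so $\vartheta_q(\xi\alpha_1) = 0$; for $\xi = 1/\alpha_2$ one has $q^{-1}\xi\alpha_2 = q^{-1}$, so $\vartheta_q\big(q^{-1}\xi\alpha_2\big) = 0$; and for $\xi = q^{-\lambda}x$ one has $q^{\lambda+1}\xi/x = q$, so $\vartheta_q\big(q^{\lambda+1}\xi/x\big) = 0$. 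In each case the inhomogeneous term of~\eqref{eq:nhmdeg2m} is identically zero, and therefore $\bar g_1(x)$ satisfies the homogeneous equation~\eqref{eq:g1eqdeg2m}.

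It remains to identify $\bar g_1(x)$ with the displayed series. Here the same vanishing phenomenon is what makes the bilateral sum~\eqref{eq:bg1xi2m} collapse to a one-sided sum: for $\xi = 1/\alpha_1$ the factor $\big(q^{n+1}\xi\alpha_1;q\big)_\infty = \big(q^{n+1};q\big)_\infty$ annihilates every term with $n \le -1$, and analogously for the other two choices only a unilateral tail survives. Factoring out the leading term and forming the ratio of consecutive summands — exactly as in the passage leading to equation~\eqref{eq:hy0sum} — rewrites each surviving sum as the ${}_3\phi_2$ in~\eqref{eq:xial1mdeg2}, \eqref{eq:xial2mdeg2} and \eqref{eq:xilxmdeg2}; this reduction is already carried out in the displays immediately preceding the statement, so I would simply invoke it. The resulting unilateral series converges since the ratio of consecutive terms tends to $q^{\mu}$ with $|q|^{\mu} < 1$, so no hypothesis beyond $\mu > 0$ is required.

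The only genuinely laborious part is the bookkeeping in this last step: matching the prefactors and the six ${}_3\phi_2$ arguments after the shift of the summation index and the extraction of the leading Pochhammer symbols. This is purely computational and parallels the earlier $q$-hypergeometric reductions in the paper, so I expect no conceptual obstacle — the substance of the argument is the one-line observation that a $\vartheta_q$ factor in the inhomogeneous term of~\eqref{eq:nhmdeg2m} vanishes at each of the three distinguished values of $\xi$.
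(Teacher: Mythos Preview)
Your proposal is correct and follows essentially the same route as the paper. The only minor variation is where the vanishing of the inhomogeneous term is verified: the paper checks it directly on the pre-limit expression~\eqref{eq:nhm2m} (the $K$-part is identically zero for $K\le 0$ because a factor $(q^K;q)_\infty$ appears, and the $L$-part tends to zero since $(q^{L+1}\xi)^{\mu}\to 0$ under $\mu>0$), whereas you verify it on the post-limit theta form~\eqref{eq:nhmdeg2m} by observing that one numerator factor $\vartheta_q(\cdot)$ lands in $q^{\Zint}$. Both arguments encode the same zero, and the identification of $\bar g_1(x)$ with the displayed ${}_3\phi_2$'s is, as you say, already carried out in the paper immediately before the statement.
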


To obtain results corresponding to the specialization $\xi= 1/\beta_1$, $\xi= 1/\beta_2$ and $\xi= x $, we replace the functions with equation~\eqref{eq:Ply} with the condition $q^{\mu '} \alpha_1 \alpha_2 /(\beta_1 \beta_2) = q^{\mu}$, i.e., $q^{\mu '}=q^{\lambda}$.
Then the function $y(x) $ also satisfies the $q$-difference equation $y(qx)=B(x)y(x) $, where $B(x)$ is given as equation~\eqref{eq:gqxBxgxqJPN2} with the condition $q^{\lambda}= q^{\mu}\beta_1\beta_2/(\alpha_1\alpha_2)$.
The function $\bar{g}_1^{\,[K,L]}(x) $ is written as
\begin{align*}
 \bar{g}_1^{\,[K,L]}(x) = \frac{1-q}{\alpha_1} x^{\lambda} \sum_{n=K}^{L} (q^n\xi )^{-1} \frac{\big(x q^{-n} / \xi, q^{1-n} /( \beta_1 \xi ), q^{1-n} /( \beta_2 \xi ) ; q\big)_{\infty}}{\big(x q^{-n-\lambda} / \xi, q^{-n} / (\alpha_1 \xi),q^{1-n} / (\alpha_2 \xi) ; q\big)_{\infty}} .
\end{align*}
It converges as $K \to -\infty $ and $L \to +\infty $, if $\mu > 0$.
Write the limit by $\bar{g}_1 (x) $.
Note that the function $\bar{g}_1^{\,[K,L]}(x)$ satisfies equation~\eqref{eq:nhmdeg2}, where the function $Q^{[K,L]}(x) $ is determined by equation~\eqref{eq:QKL}.
The non-homogeneous term in equation~\eqref{eq:nhmdeg2} is written as
\begin{align}
&\frac{q^2(1-q)}{\alpha_1\alpha_2} \bigg\{ \bigg( \frac{\alpha_2}{q}x-q^{\lambda} \bigg) Q^{[K,L]}(x/q) + \bigg( 1-\frac{\alpha_2}{q}x \bigg) Q^{[K,L]}(x) \bigg\} \nonumber
\\
&\qquad = \frac{q(1-q)\big(1-q^{\lambda}\big)}{\alpha_1} x^{\lambda+1}
\biggl( \frac{\big(x q^{1-K}/\xi, q^{1-K}/(\beta_1 \xi ), q^{1-K}/(\beta_2 \xi ) ;q\big)_{\infty}}{\big(x q^{-\lambda -K} / \xi, q^{1-K}/(\alpha_1 \xi ), q^{2-K}/(\alpha_2 \xi ) ;q\big)_{\infty}} \nonumber
\\
&\qquad \hphantom{ = \frac{q(1-q)(1-q^{\lambda})}{\alpha_1} x^{\lambda+1}
\biggl( } - \frac{\big(x q^{-L}/\xi, q^{-L}/(\beta_1 \xi ), q^{-L}/(\beta_2 \xi ) ;q\big)_{\infty}}{\big(x q^{-\lambda -1-L} / \xi, q^{-L}/(\alpha_1 \xi ), q^{1-L}/(\alpha_2 \xi ) ;q\big)_{\infty}} \biggr).
\label{eq:Plnhm}
\end{align}
We investigate the limit of equation~\eqref{eq:Plnhm} as $K \to -\infty$ and $L \to +\infty$. We have
\begin{align*}
\lim_{K \to -\infty} \frac{\big(x q^{1-K}/\xi, q^{1-K}/(\beta_1 \xi ), q^{1-K}/(\beta_2 \xi ) ;q\big)_{\infty}}{\big(x q^{-\lambda -K} / \xi, q^{1-K}/(\alpha_1 \xi ), q^{2-K}/(\alpha_2 \xi ) ;q\big)_{\infty}} = 1.
\end{align*}
If $\xi = 1/\beta_1$, $\xi = 1/\beta_2$ or $\xi = x$, then
\begin{equation*}
\frac{\big(x q^{-L}/\xi, q^{-L}/(\beta_1 \xi ), q^{-L}/(\beta_2 \xi ) ;q\big)_{\infty}}{\big(x q^{-\lambda -1-L} / \xi, q^{-L}/(\alpha_1 \xi ), q^{1-L}/(\alpha_2 \xi ) ;q\big)_{\infty}} = 0
\end{equation*}
for any positive integer $L$, and equation~\eqref{eq:Plnhm} tends to
\begin{equation*}
\frac{q(1-q)\big(1-q^{\lambda}\big)}{\alpha_1} x^{\lambda+1}
\end{equation*}
as $K \to -\infty$ and $L \to +\infty$.

If $\xi = 1/\beta_1$, then
\begin{align}
& \bar{g}_1(x)\! = \frac{(1\!-\!q)\beta_1}{\alpha_1} x^{\lambda} \frac{( \beta_1 x, q, q \beta_1 / \beta_2 ; q)_{\infty}}{\big( q^{-\lambda} x \beta_1, \beta_1 / \alpha_1,q \beta_1 / \alpha_2 ; q\big)_{\infty}} \,{}_3\phi_2 \biggl(\!\! \begin{array}{c} q^{-\lambda}\beta_1 x, \beta_1/\alpha_1, q\beta_1/\alpha_2\\
 \beta_1 x, q\beta_1/\beta_2 \end{array}\!\! ;q,q \biggr) .\!\!\label{eq:xiPlbe1}
\end{align}
The case $\xi= 1 /\beta_2 $ is obtained from the case $\xi=1 /\beta_1 $ by replacing $\beta_1$ with $\beta_2$.
If $\xi=x$, then
\begin{align}
 \bar{g}_1(x)& = \frac{(1-q)q}{\alpha_1} x^{\lambda -1}\nonumber
 \\
 &\phantom{=}\times\frac{\big(q, q^{2} /( \beta_1 x ), q^{2} /( \beta_2 x ) ; q\big)_{\infty}}{\big(q / (\alpha_1 x ),q^{2} / (\alpha_2 x ), q^{1-\lambda} ; q\big)_{\infty}}
\,{}_3\phi_2 \biggl(\!\! \begin{array}{c} q/(\alpha_1 x), q^2/(\alpha_2 x), q^{1-\lambda}\\
 q^2/(\beta_1 x), q^2/(\beta_2 x) \end{array}\! ;q,q \biggr) .
\label{eq:xiPlx}
\end{align}
\begin{Proposition} \label{prop:mdeg2nonhomo}
The functions equations~\eqref{eq:xiPlbe1}, \eqref{eq:xiPlx} satisfy
\begin{align}
& \bigg( x-\frac{q^{\mu+1}\beta_1}{\alpha_1\alpha_2} \bigg) \bigg( x-\frac{q^{\mu+1}\beta_2}{\alpha_1\alpha_2} \bigg) \bar{g}_1(x/q) + q \bigg( x-\frac{1}{\alpha_1} \bigg) \bigg( x-\frac{q}{\alpha_2} \bigg) \bar{g}_1(qx) \nonumber
\\
&\qquad{}- \bigg\{ (1+q)x^2 - \bigg( q^{\mu+1}\frac{\beta_1+\beta_2}{\alpha_1\alpha_2}+\frac{q}{\alpha_1}+\frac{q^2}{\alpha_2}\bigg) x + \bigg( 1+\frac{\beta_1\beta_2}{\alpha_1\alpha_2} \bigg) \frac{q^{\mu+2}}{\alpha_1\alpha_2} \bigg\} \bar{g}_1(x) \nonumber
\\
&\qquad{}+ \frac{q(1-q)\big(1-q^{\lambda}\big)}{\alpha_1} x^{\lambda+1} = 0.
\label{eq:nhm2m1}
\end{align}
\end{Proposition}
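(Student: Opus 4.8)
The plan is to obtain both assertions as the $K\to-\infty$, $L\to+\infty$ limit of the inhomogeneous three--term relation~\eqref{eq:nhmdeg2}, which the truncated sums $\bar{g}_1^{\,[K,L]}(x)$ already satisfy. Here $\bar{g}_1^{\,[K,L]}(x)$ is the one built from the replacement kernel and function in equation~\eqref{eq:Ply} (with $q^{\mu'}=q^{\lambda}$), passed through the matrix $P$ of equation~\eqref{eq:P2} and Proposition~\ref{prop:g1g2single}, exactly as in the derivation preceding the statement. Thus the whole argument parallels the treatment of the case $\mu=0$ in Section~\ref{sec:mu0}, with the parameters adjusted by the constraint $q^{\lambda}=q^{\mu}\beta_1\beta_2/(\alpha_1\alpha_2)$; the only genuinely new input is the evaluation of the two limits (of $\bar{g}_1^{\,[K,L]}$ and of its inhomogeneous term) for the three special values $\xi\in\{1/\beta_1,1/\beta_2,x\}$.

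First I would fix $\xi=1/\beta_1$ (the value $\xi=1/\beta_2$ then follows from the $\beta_1\leftrightarrow\beta_2$ symmetry of~\eqref{eq:nhm2m1}) and $\xi=x$, and evaluate $\bar{g}_1(x)=\lim\bar{g}_1^{\,[K,L]}(x)$. For these choices the factor $\big(q^{1-n}/(\beta_i\xi);q\big)_{\infty}$ in the summand vanishes for all sufficiently large~$n$, so the bilateral sum truncates to a one--sided sum; reindexing and pulling out the leading $q$-Pochhammer symbols then recasts it as the unilateral ${}_3\phi_2$ displayed in equations~\eqref{eq:xiPlbe1} and~\eqref{eq:xiPlx}. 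Convergence of the remaining one--sided sum, and hence legitimacy of the termwise passage to the limit, rests on the ratio estimate already recorded in the appendix, which holds precisely for $\mu>0$.

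Next I would pass to the limit in the inhomogeneous term. By the computation yielding equation~\eqref{eq:Plnhm}, for $\xi\in\{1/\beta_1,1/\beta_2,x\}$ the $L$-boundary factor $\big(xq^{-L}/\xi,\dots;q\big)_{\infty}/\big(\dots;q\big)_{\infty}$ is identically zero for every positive integer $L$, while the $K$-boundary factor tends to $1$ as $K\to-\infty$. Hence the inhomogeneous term of~\eqref{eq:nhmdeg2} converges to $q(1-q)\big(1-q^{\lambda}\big)x^{\lambda+1}/\alpha_1$, and taking the limit in the three--term relation turns~\eqref{eq:nhmdeg2} into exactly~\eqref{eq:nhm2m1}. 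This shows that the limit functions $\bar{g}_1(x)$, namely the functions in~\eqref{eq:xiPlbe1} and~\eqref{eq:xiPlx}, solve~\eqref{eq:nhm2m1}.

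The main obstacle is the rigorous justification of the two limiting steps: controlling the tails of the (truncated) sum uniformly so that the limit may be taken termwise, and confirming that the boundary factors vanish/converge as claimed, which relies on $|q^{\lambda}\alpha_1\alpha_2/(\beta_1\beta_2)|<1$, equivalently $\mu>0$, together with the vanishing of $\big(q^{1-n};q\big)_{\infty}$ for $n\ge1$ induced by the special $\xi$. The closed--form ${}_3\phi_2$ evaluations themselves are then routine $q$-series rearrangements. If one wished to dispense with the hypothesis $\mu>0$, the identity would extend by analytic continuation in the single free parameter $q^{\mu}$, since both the ${}_3\phi_2$ functions and the coefficients of~\eqref{eq:nhm2m1} are meromorphic in $q^{\mu}$.
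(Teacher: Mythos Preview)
Your proposal is correct and follows essentially the same route as the paper: pass to the limit $K\to-\infty$, $L\to+\infty$ in the truncated relation~\eqref{eq:nhmdeg2} built from the replacement kernel~\eqref{eq:Ply}, using that for $\xi\in\{1/\beta_1,1/\beta_2,x\}$ the $L$-boundary piece of~\eqref{eq:Plnhm} vanishes identically while the $K$-boundary piece tends to $1$, so the inhomogeneous term becomes $q(1-q)(1-q^{\lambda})x^{\lambda+1}/\alpha_1$. One small remark: for these special $\xi$ the resulting ${}_3\phi_2$'s in~\eqref{eq:xiPlbe1}, \eqref{eq:xiPlx} have argument~$q$ and the $K\to-\infty$ limit of the boundary factor equals~$1$ unconditionally, so the hypothesis $\mu>0$ (equivalently $|q^{\lambda}\alpha_1\alpha_2/(\beta_1\beta_2)|<1$) is in fact not needed here, which is why the paper states the proposition without it.
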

Note that, if $\bar{g}_1(x)$ satisfies equation~\eqref{eq:nhm2m1}, then it also satisfies the third order difference equation with the condition $\mu > 0$ and $q^{\lambda}=q^{\mu}\beta_1\beta_2/(\alpha_1\alpha_2)$.

We can replace the parameters by using Proposition~\ref{prop:d2zparam} to fit results on $q$-integrals with the variant of $q$-hypergeometric equation of degree two given in equation~\eqref{eq:varqhgdeg2}.
Then we can obtain the solutions of the variant of $q$-hypergeometric equation of degree two and those of a~non-homogeneous version of the variant of $q$-hypergeometric equation of degree two, which are similar to the ones in Theorems~\ref{thm:deg2-1} and~\ref{thm:deg2-2}.
We omit the details in this paper.

\section{Supplement to Section~\ref{sec:deg3}}

We investigate the $q$-integral representations obtained by the $q$-middle convolution related to the function $(\alpha_1 x, \alpha_2 x,\alpha_3 x;q)_{\infty}/(\beta_1 x, \beta_2 x, \beta_3 x;q)_{\infty}$ with the condition $q^{\lambda} = \beta_1\beta_2\beta_3 /(\alpha_1\alpha_2\alpha_3) $.

We apply Proposition~\ref{prop:convKL} for the case $y(x)= (\alpha_1 x, \alpha_2 x,\alpha_3 x;q)_{\infty}/(\beta_1 x, \beta_2 x, \beta_3 x;q)_{\infty}$.
Then the functions
\begin{align}\label{eq:hyint3}
\begin{split}
& \widehat{y}_{i}^{[K,L]}(x) = (1-q) \sum_{n=K}^{L} s \frac{P_{\lambda}(x, s)}{s-b_{i}} \frac{(\alpha_1 s, \alpha_2 s, \alpha_3 s;q)_{\infty}}{(\beta_1 s, \beta_2 s, \beta_3 s;q)_{\infty}} \bigg|_{s= q^n \xi}, \qquad i=0,1,2,3,
\\
& b_0=0, \qquad
b_1 = 1/\alpha_1, \qquad
b_2 = 1/\alpha_2, \qquad
b_3 = 1/\alpha_3
\end{split}
\end{align}
satisfy
\begin{align*}
& \begin{pmatrix} \widehat{y}_0^{\,[K,L]}(qx) \\[1mm] \widehat{y}_1^{\,[K,L]}(qx) \\[1mm] \widehat{y}_2^{\,[K,L]}(qx) \\[1mm] \widehat{y}_3^{\,[K,L]}(qx) \end{pmatrix}
= H \begin{pmatrix} \widehat{y}_0^{\,[K,L]}(x) \\[1mm] \widehat{y}_1^{\,[K,L]}(x) \\[1mm] \widehat{y}_2^{\,[K,L]}(x) \\[1mm] \widehat{y}_3^{\,[K,L]}(x) \end{pmatrix} + (1-q) Q^{[K,L]}(x) \begin{pmatrix} 1 \\[1mm] -\alpha_1 x /(1-\alpha_1 x) \\[1mm] -\alpha_2 x /(1-\alpha_2 x) \\[1mm] -\alpha_3 x /(1-\alpha_3 x) \end{pmatrix}\!,
\end{align*}
where $H$ is the matrix determined in equation~\eqref{eq:claN3} with the condition $\mu=0 $ and $q^{\lambda} = \beta_1\beta_2\beta_3 /(\alpha_1\alpha_2\alpha_3) $, and
\begin{equation*}
Q^{[K,L]}(x)=P_{\lambda}\big(x, q^{K-1} \xi\big) y \big(q^{K} \xi\big) - P_{\lambda}\big(x, q^{L} \xi\big) y \big(q^{L+1} \xi\big).
\end{equation*}
Note that the convergence theorem in Theorem~\ref{thm:qcintqJH} for the functions $\widehat{y}_i^{\,[K,L]}(x)$, $i=0,1,2,3$, is not applicable.
Write
\begin{align}
\begin{pmatrix}
 \bar{g}_1^{\,[K,L]}(x)\\[1mm]
 \bar{g}_2^{\,[K,L]}(x)\\[1mm]
 \bar{g}_3^{\,[K,L]}(x)\\[1mm]
 \bar{g}_4^{\,[K,L]}(x)
 \end{pmatrix}
=
P^{-1} \begin{pmatrix}
 \widehat{y}_0^{\,[K,L]}(x)\\[1mm]
 \widehat{y}_1^{\,[K,L]}(x)\\[1mm]
 \widehat{y}_2^{\,[K,L]}(x)\\[1mm]
 \widehat{y}_3^{\,[K,L]}(x)
 \end{pmatrix}
=\begin{pmatrix}
 0 & 1 & 0 & -1\\
 0 & 0 & 1 & -1\\
 0 & 0 & 0 & 1\\
 1 & 0 & 0 & -1
 \end{pmatrix}
 \begin{pmatrix}
 \widehat{y}_0^{\,[K,L]}(x)\\[1mm]
 \widehat{y}_1^{\,[K,L]}(x)\\[1mm]
 \widehat{y}_2^{\,[K,L]}(x)\\[1mm]
 \widehat{y}_3^{\,[K,L]}(x)
 \end{pmatrix}\!,
\label{eq:gyPinv}
\end{align}
where $P$ is the matrix determined in equation~\eqref{eq:P}.
Then we have
\begin{align*}
\begin{pmatrix}
 \bar{g}_1^{\,[K,L]}(qx)\\[1mm]
 \bar{g}_2^{\,[K,L]}(qx)\\[1mm]
 \bar{g}_3^{\,[K,L]}(qx)\\[1mm]
 \bar{g}_4^{\,[K,L]}(qx)
 \end{pmatrix}
= P^{-1}HP \begin{pmatrix}
 \bar{g}_1^{\,[K,L]}(x)\\[1mm]
 \bar{g}_2^{\,[K,L]}(x)\\[1mm]
 \bar{g}_3^{\,[K,L]}(x)\\[1mm]
 \bar{g}_4^{\,[K,L]}(x)
 \end{pmatrix}
- (1-q) Q^{[K,L]}(x) P^{-1} \begin{pmatrix}
 -1\\[1mm]
 \alpha_1 x /(1-\alpha_1 x)\\[1mm]
 \alpha_2 x /(1-\alpha_2 x)\\[1mm]
 \alpha_3 x /(1-\alpha_3 x)
\end{pmatrix}\!.
\end{align*}
The first two equations are written as
\begin{align*}
& \bar{g}_1^{\,[K,L]}(qx) = \frac{\alpha_1\alpha_3 x^2+ \big\{\alpha_1(B_1-1)- \alpha_3\big(B_1+q^{\lambda}\big)\big\} x+q^{\lambda}}{(1-\alpha_1 x)(1-\alpha_3 x)}\bar{g}_1^{\,[K,L]}(x)
\\
& \hphantom{\bar{g}_1^{\,[K,L]}(qx) =} + \frac{(\alpha_1-\alpha_3)B_2 x}{(1-\alpha_1 x)(1-\alpha_3 x)}\bar{g}_2^{\,[K,L]}(x) + \frac{(1-q)(\alpha_3-\alpha_1)x}{(1-\alpha_1 x)(1-\alpha_3 x)} Q^{[K,L]}(x),
 \\
&\bar{g}_2^{\,[K,L]}(qx) = \frac{\alpha_2\alpha_3 x^2+ \big\{\alpha_2(B_2-1)-\alpha_1\big(B_2+q^{\lambda}\big) \big\}x +q^{\lambda}}{(1-\alpha_2 x)(1-\alpha_3 x)}\bar{g}_2^{\,[K,L]}(x)
 \\
& \hphantom{\bar{g}_2^{\,[K,L]}(qx) =} + \frac{(\alpha_2-\alpha_3)B_1 x}{(1-\alpha_2 x)(1-\alpha_3 x)}\bar{g}_1^{\,[K,L]}(x) + \frac{(1-q)(\alpha_3-\alpha_2)x}{(1-\alpha_2 x)(1-\alpha_3 x)}Q^{[K,L]}(x).
\end{align*}
We apply Proposition~\ref{prop:g1g2single} to obtain the $q$-difference equation which the function $\bar{g}_1^{\,[K,L]}(x) $ satisfies.
Then we have
\begin{align}
& \biggl( x-q\frac{\beta_1\beta_2}{\alpha_1\alpha_2\alpha_3} \biggr) \biggl( x-q\frac{\beta_2\beta_3}{\alpha_1\alpha_2\alpha_3} \biggr) \biggl( x-q\frac{\beta_3\beta_1}{\alpha_1\alpha_2\alpha_3} \biggr) \bar{g}_1^{\,[K,L]}(x/q) \nonumber
\\
&\qquad{} +q \biggl( x-\frac{1}{\alpha_1} \biggr) \biggl( x-\frac{q}{\alpha_2} \biggr) \biggl( x-\frac{1}{\alpha_3} \biggr) \bar{g}_1^{\,[K,L]}(qx) \nonumber
\\
&\qquad{} +\biggl\{ -(1+q)x^3 + \biggl( \frac{q}{\alpha_1}+\frac{q^2}{\alpha_2}+\frac{q}{\alpha_3}+q\frac{\beta_1\beta_2+\beta_2\beta_3+\beta_3\beta_1}{\alpha_1\alpha_2\alpha_3} \biggr) x^2 \nonumber
\\
&\qquad\hphantom{+\biggl\{} -\biggl( q^2\frac{\beta_1+\beta_2+\beta_3}{\alpha_1\alpha_2\alpha_3}+q^2\frac{\beta_1\beta_2\beta_3}{\alpha_1{\alpha_2}^2{\alpha_3}^2}+q^2\frac{\beta_1\beta_2\beta_3}{{\alpha_1}^2{\alpha_2}^2 \alpha_3}+q\frac{\beta_1\beta_2\beta_3}{{\alpha_1}^2 \alpha_2 {\alpha_3}^2} \biggr) x \nonumber
\\
&\qquad\hphantom{+\biggl\{} +q^2(1+q)\frac{\beta_1\beta_2\beta_3}{{\alpha_1}^2{\alpha_2}^2{\alpha_3}^2} \biggr\} \bar{g}_1^{\,[K,L]}(x) \nonumber
\\
&\qquad{} +q(q-1)\frac{\alpha_1-\alpha_3}{\alpha_1\alpha_3}x \biggl\{\! \biggl( \! x-q\frac{\beta_1\beta_2\beta_3}{\alpha_1{\alpha_2}^2\alpha_3} \biggr) Q^{[K,L]}(x/q)-\!\biggl( \! x-\frac{q}{\alpha_2} \biggr) Q^{[K,L]}(x) \biggr\} = 0,
\label{eq:g1KL3}
\end{align}
and the non-homogeneous term is written as
\begin{align}
& q(q-1)\frac{\alpha_1-\alpha_3}{\alpha_1\alpha_3}x \biggl\{ \biggl( x-q\frac{\beta_1\beta_2\beta_3}{\alpha_1{\alpha_2}^2\alpha_3} \biggr) Q^{[K,L]}(x/q)-\biggl( x-\frac{q}{\alpha_2} \biggr) Q^{[K,L]}(x) \biggr\} \nonumber
\\
& \qquad{}= q^2(q-1)\big(1-q^{\lambda}\big)\frac{\alpha_1-\alpha_3}{\alpha_1\alpha_2\alpha_3}x \biggl( \frac{\big(q^{\lambda+K+1}\xi/x, q^K\xi\alpha_1, q^{K-1}\xi\alpha_2, q^K\xi\alpha_3 ;q\big)_{\infty}}{\big(q^K\xi/x, q^K\xi\beta_1, q^K\xi\beta_2, q^K\xi\beta_3 ;q\big)_{\infty}} \nonumber
\\
&\qquad \hphantom{= q^2(q-1)\big(1-q^{\lambda}\big)\frac{\alpha_1-\alpha_3}{\alpha_1\alpha_2\alpha_3}x \biggl(} -\frac{\big(q^{\lambda+L+2}\xi/x, q^{L+1}\xi\alpha_1, q^L\xi\alpha_2, q^{L+1}\xi\alpha_3 ;q\big)_{\infty}}{\big(q^{L+1}\xi/x, q^{L+1}\xi\beta_1, q^{L+1}\xi\beta_2, q^{L+1}\xi\beta_3 ;q\big)_{\infty}} \biggr).\!\!\!\!
\label{eq:nonhomKL3}
\end{align}
On the other hand, it follows from equations~\eqref{eq:hyint3}, \eqref{eq:gyPinv} that the function $\bar{g}_1^{\,[K,L]}(x) $ is written~as
\begin{align*}
& \bar{g}_1^{\,[K,L]}(x) = \widehat{y}_1^{\,[K,L]}(x)-\widehat{y}_3^{\,[K,L]}(x)
\\
&\qquad{} = (1-q) \sum_{n=K}^{L} \frac{s ( \alpha_3 -\alpha_1)}{(1-\alpha_1 s)(1-\alpha_3 s)} \frac{\big(q^{\lambda+1} s /x,\alpha_1 s, \alpha_2 s, \alpha_3 s;q\big)_{\infty}}{\big(q s /x,\beta_1 s, \beta_2 s, \beta_3 s;q\big)_{\infty}}\bigg|_{s= q^n \xi}
 \\
&\qquad{} = (1-q)(\alpha_3 -\alpha_1 )\sum_{n=K}^{L}c_n,
\\
&c_n = q^n\xi \frac{\big(q^{\lambda+n+1}\xi /x, q^{n+1} \xi\alpha_1, q^n \xi\alpha_2, q^{n+1} \xi\alpha_3; q\big)_{\infty}}{\big(q^{n+1}\xi /x, q^n \xi\beta_1, q^n \xi\beta_2, q^n \xi\beta_3 ;q\big)_{\infty}} .
\end{align*}
Since $c_{n+1}/c_n \to q$, $n\to +\infty$, and $c_{-(n+1)}/c_{-n} \to q^{\lambda +1} \alpha_1 \alpha_2 \alpha_3 /(\beta_1 \beta_2 \beta_3 )= q$, $n\to +\infty$, the function $\bar{g}_1^{\,[K,L]}(x) $ converges as $K \to -\infty $ and $L \to +\infty $.
Write
\begin{align}
 \bar{g}_1 (x)&= \lim_{K \to -\infty \atop{ L \to +\infty}} \bar{g}_1^{\,[K,L]}(x) \nonumber
 \\
& = (1-q)(\alpha_3 -\alpha_1 )\sum_{n=-\infty}^{+\infty} q^n\xi \frac{\big(q^{\lambda+n+1}\xi /x, q^{n+1} \xi\alpha_1, q^n \xi\alpha_2, q^{n+1} \xi\alpha_3; q\big)_{\infty}}{\big(q^{n+1}\xi /x, q^n \xi\beta_1, q^n \xi\beta_2, q^n \xi\beta_3 ;q\big)_{\infty}} .
\label{eq:bg1xi}
\end{align}
Note that the functions $ \widehat{y}_1^{\,[K,L]}(x)$ and $\widehat{y}_3^{\,[K,L]}(x)$ do not converge as $K \to -\infty $.

We investigate the limit of the non-homogeneous term in equation~\eqref{eq:nonhomKL3} as $K \to -\infty $ and $L \to +\infty $.
We have
\begin{equation*}
\lim_{L \to +\infty} \frac{\big(q^{\lambda+L+2}\xi/x, q^{L+1}\xi\alpha_1, q^L\xi\alpha_2, q^{L+1}\xi\alpha_3 ;q\big)_{\infty}}{\big(q^{L+1}\xi/x, q^{L+1}\xi\beta_1, q^{L+1}\xi\beta_2, q^{L+1}\xi\beta_3 ;q\big)_{\infty}} =1 .
\end{equation*}
It follows from the identity
\begin{align*}
& \frac{\big(q^{\lambda+K+1}\xi/x, q^K\xi\alpha_1, q^{K-1}\xi\alpha_2, q^K\xi\alpha_3 ;q\big)_{\infty}}{\big(q^K\xi/x, q^K\xi\beta_1, q^K\xi\beta_2, q^K\xi\beta_3 ;q\big)_{\infty}}
\\
& \qquad = \frac{\vartheta_q\big(q^{\lambda+K+1}\xi/x\big) \vartheta_q\big(q^K\xi\alpha_1\big) \vartheta_q\big(q^{K-1}\xi\alpha_2\big)\vartheta_q\big(q^K\xi\alpha_3\big)}{\vartheta_q\big(q^K\xi/x\big) \vartheta_q\big(q^K\xi\beta_1\big) \vartheta_q\big(q^K\xi\beta_2\big)\vartheta_q\big(q^K\xi\beta_3\big)}
 \\
&\qquad\phantom{=} \times\frac{\big(q^{1-K}x/\xi, q^{1-K}/(\xi\beta_1), q^{1-K}/(\xi\beta_2), q^{1-K}/(\xi\beta_3) ;q\big)_{\infty}}{\big(q^{-\lambda-K}x/\xi, q^{1-K}/(\xi\alpha_1), q^{2-K}/(\xi\alpha_2), q^{1-K}/(\xi\alpha_3) ;q\big)_{\infty}},
\end{align*}
which is obtained similarly to equation~\eqref{eq:thetaq}, and the condition $q^{\lambda} = \beta_1 \beta_2 \beta_3 /(\alpha_1 \alpha_2 \alpha_3 ) $ that
\begin{align*}
 \lim_{K \to -\infty} &\frac{\big(q^{\lambda+K+1}\xi/x, q^K\xi\alpha_1, q^{K-1}\xi\alpha_2, q^K\xi\alpha_3 ;q\big)_{\infty}}{\big(q^K\xi/x, q^K\xi\beta_1, q^K\xi\beta_2, q^K\xi\beta_3 ;q\big)_{\infty}}
\\
& = \frac{\vartheta_q\big(q^{\lambda+1}\xi/x\big) \vartheta_q(\xi\alpha_1) \vartheta_q\big(q^{-1}\xi\alpha_2\big) \vartheta_q(\xi\alpha_3)}{\vartheta_q(\xi/x) \vartheta_q(\xi\beta_1) \vartheta_q(\xi\beta_2) \vartheta_q(\xi\beta_3)}.
\end{align*}
Therefore, equation~\eqref{eq:nonhomKL3} tends to
\begin{equation*}
q^2(q-1)\big(1-q^{\lambda}\big)\frac{\alpha_1-\alpha_3}{\alpha_1\alpha_2\alpha_3}x \bigg( \frac{\vartheta_q\big(q^{\lambda+1}\xi/x\big) \vartheta_q(\xi\alpha_1) \vartheta_q\big(q^{-1}\xi\alpha_2\big) \vartheta_q(\xi\alpha_3)}{\vartheta_q(\xi/x) \vartheta_q(\xi\beta_1) \vartheta_q(\xi\beta_2) \vartheta_q(\xi\beta_3)} -1 \bigg)
\end{equation*}
as $K \to -\infty $ and $L \to +\infty $.
Hence we obtain the following proposition.

\begin{Proposition} \label{prop:g1nonhom}
The function $ \bar{g}_1 (x)$ in equation~\eqref{eq:bg1xi} satisfies
\begin{align}
& \biggl( x-q\frac{\beta_1\beta_2}{\alpha_1\alpha_2\alpha_3} \biggr) \biggl( x-q\frac{\beta_2\beta_3}{\alpha_1\alpha_2\alpha_3} \biggr) \biggl( x-q\frac{\beta_3\beta_1}{\alpha_1\alpha_2\alpha_3} \biggr) \bar{g}_1(x/q) \nonumber
\\
& \qquad{}+q \biggl( x-\frac{1}{\alpha_1} \biggr) \biggl( x-\frac{q}{\alpha_2} \biggr) \biggl( x-\frac{1}{\alpha_3} \biggr) \bar{g}_1(qx) \nonumber
\\
&\qquad{} +\biggl\{ -(1+q)x^3 + \biggl( \frac{q}{\alpha_1}+\frac{q^2}{\alpha_2}+\frac{q}{\alpha_3} +q\frac{\beta_1\beta_2+\beta_2\beta_3+\beta_3\beta_1}{\alpha_1\alpha_2\alpha_3} \biggr) x^2 \nonumber
\\
&\qquad\hphantom{ +\biggl\{} -\biggl( q^2\frac{\beta_1+\beta_2+\beta_3}{\alpha_1\alpha_2\alpha_3} +q^2\frac{\beta_1\beta_2\beta_3}{\alpha_1{\alpha_2}^2{\alpha_3}^2}+q^2\frac{\beta_1\beta_2\beta_3}{{\alpha_1}^2{\alpha_2}^2 \alpha_3}+q\frac{\beta_1\beta_2\beta_3}{{\alpha_1}^2 \alpha_2 {\alpha_3}^2} \biggr) x \nonumber
\\
&\qquad\hphantom{ +\biggl\{} +q^2(1+q)\frac{\beta_1\beta_2\beta_3}{{\alpha_1}^2{\alpha_2}^2{\alpha_3}^2} \biggr\} \bar{g}_1(x) \nonumber
\\
& \qquad{} +\!q^2(q-\!1)\big(1\!-q^{\lambda}\big)\frac{\alpha_1-\alpha_3}{\alpha_1\alpha_2\alpha_3}x \biggl( \frac{\vartheta_q(q^{\lambda+1}\xi/x) \vartheta_q(\xi\alpha_1) \vartheta_q(q^{-1}\xi\alpha_2) \vartheta_q(\xi\alpha_3)}{\vartheta_q(\xi/x) \vartheta_q(\xi\beta_1) \vartheta_q(\xi\beta_2) \vartheta_q(\xi\beta_3)} -1\! \biggr)\! = 0,\!\!
\label{eq:g1nonhom}
\end{align}
where $q^{\lambda}= \beta_1\beta_2\beta_3 /(\alpha_1 \alpha_2 \alpha_3)$.
\end{Proposition}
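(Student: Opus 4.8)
The plan is to obtain equation~\eqref{eq:g1nonhom} by passing to the limit $K\to-\infty$, $L\to+\infty$ in the truncated identity~\eqref{eq:g1KL3}, which holds for all finite integers $K\le L$. That identity is already in place: combining Proposition~\ref{prop:convKL} with the change of basis by $P^{-1}$ (see equations~\eqref{eq:gyPinv} and~\eqref{eq:P}) and Proposition~\ref{prop:g1g2single} shows that the first component $\bar{g}_1^{\,[K,L]}(x)$ satisfies the homogeneous second-order $q$-difference operator appearing in~\eqref{eq:g1nonhom} together with the explicit boundary contribution~\eqref{eq:nonhomKL3}. Since the coefficients of $\bar{g}_1^{\,[K,L]}(x/q)$, $\bar{g}_1^{\,[K,L]}(qx)$ and $\bar{g}_1^{\,[K,L]}(x)$ in~\eqref{eq:g1KL3} do not depend on $K$ or $L$, it suffices to pass to the limit in each of the four summands separately.

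First I would dispose of the homogeneous part. The series defining $\bar{g}_1^{\,[K,L]}(x)$ is $\sum_{n=K}^{L}c_n$ with $c_n$ as displayed just before~\eqref{eq:bg1xi}; the ratio test gives $c_{n+1}/c_n\to q$ as $n\to+\infty$ and $c_{-(n+1)}/c_{-n}\to q^{\lambda+1}\alpha_1\alpha_2\alpha_3/(\beta_1\beta_2\beta_3)$ as $n\to+\infty$, and the balancing condition $q^{\lambda}=\beta_1\beta_2\beta_3/(\alpha_1\alpha_2\alpha_3)$ turns the latter limit into $q$. Both tails then converge geometrically, so $\bar{g}_1^{\,[K,L]}(x)\to\bar{g}_1(x)$ of~\eqref{eq:bg1xi}, and the same holds with $x$ replaced by $x/q$ and by $qx$. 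Consequently the three homogeneous summands tend to the corresponding terms of~\eqref{eq:g1nonhom}.

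The hard part is the limit of the non-homogeneous term~\eqref{eq:nonhomKL3}. The $L$-dependent factor tends to $1$ directly, since each $q$-shifted factorial whose argument tends to $0$ tends to $1$. For the $K$-dependent factor I would rewrite the ratio of infinite products as a ratio of theta functions $\vartheta_q(t)=(t,q/t,q;q)_{\infty}$, exactly as in~\eqref{eq:thetaq}, using the quasi-periodicity $\vartheta_q(qt)=-t^{-1}\vartheta_q(t)$, equivalently $\vartheta_q(qax)/\vartheta_q(qbx)=(b/a)\,\vartheta_q(ax)/\vartheta_q(bx)$. This extracts the whole $K$-dependence into the scalar $\big(q^{\lambda}\alpha_1\alpha_2\alpha_3/(\beta_1\beta_2\beta_3)\big)^{-K}$ multiplied by a product of $q$-shifted factorials whose arguments tend to $0$ as $K\to-\infty$. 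Here the condition $q^{\lambda}=\beta_1\beta_2\beta_3/(\alpha_1\alpha_2\alpha_3)$ is decisive: it forces that scalar to equal $1$ for every $K$, so the limit exists and equals the theta-function ratio in~\eqref{eq:g1nonhom}, the surviving $-1$ coming from the $L$-limit. Collecting the four limits gives~\eqref{eq:g1nonhom}.

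I expect the theta-function bookkeeping of this last step to be the only genuine obstacle: one must track the four numerator and four denominator factors through the quasi-periodicity relation and check that their combined $K$-exponent is exactly $-\lambda$ in the \emph{$q^{\lambda}$}-scale, i.e.\ that it cancels against $\alpha_1\alpha_2\alpha_3/(\beta_1\beta_2\beta_3)$ under the stated balancing condition. Everything else reduces to a routine termwise passage to the limit in a fixed linear $q$-difference relation whose coefficients are independent of $K$ and $L$.
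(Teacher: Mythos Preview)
Your proposal is correct and matches the paper's own argument essentially step for step: the paper likewise passes to the limit in the truncated identity~\eqref{eq:g1KL3}, uses the ratio test with the balancing condition $q^{\lambda}=\beta_1\beta_2\beta_3/(\alpha_1\alpha_2\alpha_3)$ to get convergence of $\bar g_1^{\,[K,L]}$, observes that the $L$-term in~\eqref{eq:nonhomKL3} tends to $1$, and rewrites the $K$-term via theta functions so that the $K$-dependent scalar becomes $\big(q^{\lambda}\alpha_1\alpha_2\alpha_3/(\beta_1\beta_2\beta_3)\big)^{-K}=1$, leaving exactly the theta ratio in~\eqref{eq:g1nonhom}.
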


Equation~\eqref{eq:g1nonhom} is a non-homogeneous extension of equation~\eqref{eq:g1eqdeg3}.
We investigate a relationship with the fourth order difference equation in equation~\eqref{eq:g1q4}.
By imposing the condition in equation~\eqref{eq:condmula}, it follows that the $q$-difference equation of $\bar{g}_1( x) $ in equation~\eqref{eq:g1q4} is factorized~as
\begin{align}
& \biggl( T_x - q^2 \frac{\beta_1 \beta_2 \beta_3}{\alpha_1 \alpha_2 \alpha_3} \biggr) (T_x -q) \biggl[ (q \alpha_1 x -1) (\alpha_2 x -1) (q \alpha_3 x -1) T_x^2 \nonumber
\\
& \qquad - \biggl\{ q(q+1) \alpha_1 \alpha_2 \alpha_3 x^3 - q (\alpha_1 \alpha_2 + \alpha_2 \alpha_3 + q \alpha_3 \alpha_1 +\beta_1 \beta_2 + \beta_2 \beta_3 +\beta_3 \beta_1) x^2 \nonumber
\\
& \qquad\hphantom{ - \biggl\{} + q \biggl( \beta_1 + \beta_2 + \beta_3 +\frac{\beta_1 \beta_2 \beta_3}{\alpha_1 \alpha_2 \alpha_3} \biggl( \alpha_1 + \frac{\alpha_2}{q} + \alpha_3 \biggr) \biggr) x - (q+1)\frac{\beta_1 \beta_2 \beta_3}{\alpha_1 \alpha_2 \alpha_3} \biggr\} T_x \nonumber
\\
& \qquad + \frac{q (\alpha_1 \alpha_2 \alpha_3 x -\beta_1 \beta_2) (\alpha_1 \alpha_2 \alpha_3 x- \beta_2 \beta_3) (\alpha_1 \alpha_2 \alpha_3 x -\beta_3 \beta_1)}{\alpha_1^2 \alpha_2^2 \alpha_3^2} \biggr] \bar{g}_1( x) =0.
\label{eq:deg3facto}
\end{align}
Therefore, if $\bar{g}_1(x) $ satisfies equation~\eqref{eq:g1nonhom}, then it also satisfies equation~\eqref{eq:g1q4} with the condition in equation~\eqref{eq:condmula}.

If $\xi= 1 /\alpha_1$, $\xi= 1 /\alpha_2 $, $\xi= 1 /\alpha_3 $ or $\xi=q^{-\lambda} x$, then
\begin{align*}
\frac{\big(q^{\lambda+K+1}\xi/x, q^K\xi\alpha_1, q^{K-1}\xi\alpha_2, q^K\xi\alpha_3 ;q\big)_{\infty}}{\big(q^K\xi/x, q^K\xi\beta_1, q^K\xi\beta_2, q^K\xi\beta_3 ;q\big)_{\infty}} =0
\end{align*}
for any negative integer $K$, and equation~\eqref{eq:nonhomKL3} tends to
\begin{equation*}
-q^2(q-1)\big(1-q^{\lambda}\big)\frac{\alpha_1-\alpha_3}{\alpha_1\alpha_2\alpha_3}x
\end{equation*}
as $K \to -\infty $ and $L \to +\infty $.

We substitute $\xi= 1 /\alpha_1$, $\xi= 1 /\alpha_2 $, $\xi= 1 /\alpha_3 $ or $\xi=q^{-\lambda} x$ in $\bar{g}_1(x) $.
If $\xi= 1 /\alpha_1 $, then
\begin{align}
\bar{g}_1(x) &= (1-q)\frac{\alpha_3 -\alpha_1}{\alpha_1}\frac{\big(q^{\lambda+1}/(\alpha_1 x), \alpha_2/\alpha_1, q\alpha_3/\alpha_1, q;q\big)_{\infty}}{\big(q/(\alpha_1 x), \beta_1/\alpha_1, \beta_2/\alpha_1, \beta_3/\alpha_1;q\big)_{\infty}} \nonumber
\\
&\phantom{=} \times {}_4\phi_3 \biggl(\!\! \begin{array}{c}q/(\alpha_1 x), \beta_1/\alpha_1, \beta_2/\alpha_1, \beta_3/\alpha_1\\ q^{\lambda+1}/(\alpha_1 x), \alpha_2/\alpha_1, q\alpha_3/\alpha_1 \end{array};q,q \biggr) .
\label{eq:xi1al1deg3}
\end{align}
If $\xi=1 /\alpha_2 $, then
\begin{align}
\bar{g}_1(x) &= (1-q)q\frac{\alpha_3 -\alpha_1}{\alpha_2}\frac{\big(q^{\lambda+2}/(\alpha_2 x), q^2\alpha_1/\alpha_2, q^2\alpha_3/\alpha_2, q;q\big)_{\infty}}{\big(q^2/(\alpha_2 x), q\beta_1/\alpha_2, q\beta_2/\alpha_2, q\beta_3/\alpha_2 ;q\big)_{\infty}} \nonumber
\\
&\phantom{=} \times {}_4\phi_3 \biggl(\!\! \begin{array}{c}q^2/(\alpha_2 x), q\beta_1/\alpha_2, q\beta_2/\alpha_2, q\beta_3/\alpha_2 \\ q^{\lambda+2}/(\alpha_2 x), q^2\alpha_1/\alpha_2, q^2\alpha_3/\alpha_2 \end{array};q,q\biggr) .
\label{eq:xi1al2deg3}
\end{align}
If $\xi=1 /\alpha_3$, then
\begin{align}
\bar{g}_1(x) &= (1-q)\frac{\alpha_3 -\alpha_1}{\alpha_3}\frac{\big(q^{\lambda+1}/(\alpha_3 x), q\alpha_1/\alpha_3, \alpha_2/\alpha_3, q ;q\big)_{\infty}}{\big(q/(\alpha_3 x), \beta_1/\alpha_3, \beta_2/\alpha_3, \beta_3/\alpha_3 ;q\big)_{\infty}} \nonumber
\\
&\phantom{=} \times {}_4\phi_3 \biggl(\!\! \begin{array}{c}q/(\alpha_3 x), \beta_1/\alpha_3, \beta_2/\alpha_3, \beta_3/\alpha_3\\ q^{\lambda+1}/(\alpha_3 x), q\alpha_1/\alpha_3, \alpha_2/\alpha_3 \end{array};q,q \biggr) .
\label{eq:xi1al3deg3}
\end{align}
If $\xi=q^{-\lambda}x$, then
\begin{align}
\bar{g}_1(x) &= (1-q)q^{-\lambda}(\alpha_3 -\alpha_1)x\frac{\big( q^{-\lambda+1}\alpha_1 x, q^{-\lambda}\alpha_2 x, q^{-\lambda+1}\alpha_3 x, q ;q\big)_{\infty}}{\big(q^{-\lambda}\beta_1 x, q^{-\lambda}\beta_2 x, q^{-\lambda}\beta_3 x, q^{-\lambda+1} ;q\big)_{\infty}} \nonumber
\\
&\phantom{=} \times {}_4\phi_3 \biggl(\!\! \begin{array}{c} q^{-\lambda}\beta_1 x, q^{-\lambda}\beta_2 x, q^{-\lambda}\beta_3 x, q^{-\lambda+1} \\
 q^{-\lambda+1}\alpha_1 x, q^{-\lambda}\alpha_2 x, q^{-\lambda+1}\alpha_3 x \end{array};q,q\biggr) .
\label{eq:xi1al4deg3}
\end{align}
\begin{Proposition} \label{prop:deg3nonhom1}
The functions in equations~\eqref{eq:xi1al1deg3}, \eqref{eq:xi1al2deg3}, \eqref{eq:xi1al3deg3}, \eqref{eq:xi1al4deg3} satisfy
\begin{align*}
& \biggl( x-q\frac{\beta_1\beta_2}{\alpha_1\alpha_2\alpha_3} \biggr) \biggl( x-q\frac{\beta_2\beta_3}{\alpha_1\alpha_2\alpha_3} \biggr) \biggl( x-q\frac{\beta_3\beta_1}{\alpha_1\alpha_2\alpha_3} \biggr) \bar{g}_1(x/q)
\\
&\qquad +q \biggl( x-\frac{1}{\alpha_1} \biggr) \biggl( x-\frac{q}{\alpha_2} \biggr) \biggl( x-\frac{1}{\alpha_3} \biggr) \bar{g}_1(qx)
 \\
&\qquad +\biggl\{ -(1+q)x^3 + \biggl( \frac{q}{\alpha_1}+\frac{q^2}{\alpha_2}+\frac{q}{\alpha_3}+q\frac{\beta_1\beta_2+\beta_2\beta_3+\beta_3\beta_1}{\alpha_1\alpha_2\alpha_3} \biggr) x^2
 \\
&\qquad\hphantom{ +\biggl\{} -\biggl( q^2\frac{\beta_1+\beta_2+\beta_3}{\alpha_1\alpha_2\alpha_3}+q^2\frac{\beta_1\beta_2\beta_3}{\alpha_1{\alpha_2}^2{\alpha_3}^2}+q^2\frac{\beta_1\beta_2\beta_3}{{\alpha_1}^2{\alpha_2}^2 \alpha_3}+q\frac{\beta_1\beta_2\beta_3}{{\alpha_1}^2 \alpha_2 {\alpha_3}^2} \biggr) x
 \\
&\qquad\hphantom{ +\biggl\{} +q^2(1+q)\frac{\beta_1\beta_2\beta_3}{{\alpha_1}^2{\alpha_2}^2{\alpha_3}^2} \biggr\} \bar{g}_1(x) -q^2(q-1)\big(1-q^{\lambda}\big)\frac{\alpha_1-\alpha_3}{\alpha_1\alpha_2\alpha_3}x = 0.
\end{align*}
\end{Proposition}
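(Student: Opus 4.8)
The plan is to obtain the assertion by passing to the limit $K\to-\infty$, $L\to+\infty$ in the inhomogeneous $q$-difference equation~\eqref{eq:g1KL3} satisfied by the truncated sum $\bar{g}_1^{\,[K,L]}(x)$, after observing that the four prescribed values of $\xi$ make the $K$-dependent part of the inhomogeneous term~\eqref{eq:nonhomKL3} vanish identically for negative $K$. Throughout, the homogeneous cubic operator on the left of~\eqref{eq:g1KL3} is exactly the one appearing in the statement, so only the inhomogeneous term needs attention.

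First I would record that, for each $\xi\in\{1/\alpha_1,\,1/\alpha_2,\,1/\alpha_3,\,q^{-\lambda}x\}$, the function $\bar{g}_1^{\,[K,L]}(x)$ converges as $K\to-\infty$, $L\to+\infty$ to the corresponding ${}_4\phi_3$ series in~\eqref{eq:xi1al1deg3}--\eqref{eq:xi1al4deg3}. This is the specialization of the bilateral series~\eqref{eq:bg1xi} already displayed in the text: the substitution collapses the bilateral sum to a unilateral one because a $q$-Pochhammer factor of the summand vanishes for the excluded indices, exactly as in the reduction leading to~\eqref{eq:hy0sum}, while the convergence follows from the ratio estimates $c_{n+1}/c_n\to q$ and $c_{-(n+1)}/c_{-n}\to q$ established for~\eqref{eq:bg1xi}.

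The crux is the inhomogeneous term. For each of the four values of $\xi$, one of the $q$-Pochhammer symbols in the numerator of the $K$-factor
\[
\frac{\big(q^{\lambda+K+1}\xi/x,\,q^K\xi\alpha_1,\,q^{K-1}\xi\alpha_2,\,q^K\xi\alpha_3;q\big)_{\infty}}{\big(q^K\xi/x,\,q^K\xi\beta_1,\,q^K\xi\beta_2,\,q^K\xi\beta_3;q\big)_{\infty}}
\]
reduces to a symbol $(q^{m};q)_{\infty}$ with $m\le 0$ for every negative integer $K$: namely $(q^K;q)_{\infty}$ when $\xi=1/\alpha_1$ or $\xi=1/\alpha_3$, the factor $(q^{K-1};q)_{\infty}$ when $\xi=1/\alpha_2$, and $(q^{K+1};q)_{\infty}$ when $\xi=q^{-\lambda}x$. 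Hence the $K$-factor is $0$ for all negative $K$, and—crucially—this vanishing is uniform in $x$, so the shifts $x\mapsto x/q,\,qx$ cause no difficulty. Since the $L$-factor tends to $1$ as $L\to+\infty$, the inhomogeneous term~\eqref{eq:nonhomKL3} converges to
\[
-q^2(q-1)\big(1-q^{\lambda}\big)\frac{\alpha_1-\alpha_3}{\alpha_1\alpha_2\alpha_3}x .
\]
Passing to the limit in~\eqref{eq:g1KL3} then produces exactly the stated equation.

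The step most likely to require care is confirming that the limit may be taken inside the cubic operator, i.e., that all three shifted partial sums converge simultaneously and no boundary contribution is lost. This is not automatic, since the coefficients of~\eqref{eq:g1KL3} are rational in $x$ with poles at the $b_i$; I would handle it by noting that the operator's coefficients do not depend on the summation index and that, by the ratio estimates above, each of $\bar{g}_1^{\,[K,L]}(x/q)$, $\bar{g}_1^{\,[K,L]}(qx)$, $\bar{g}_1^{\,[K,L]}(x)$ is dominated by a convergent geometric series. Termwise passage to the limit is therefore legitimate for $x$ outside the finitely many poles, and the resulting identity of meromorphic functions extends to all $x$ by continuity.
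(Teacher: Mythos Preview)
Your proposal is correct and follows precisely the argument in the paper: the text preceding the proposition already establishes that $\bar g_1^{\,[K,L]}$ satisfies \eqref{eq:g1KL3} and converges to the series \eqref{eq:bg1xi}, that for the four listed values of $\xi$ the $K$-part of \eqref{eq:nonhomKL3} vanishes for every negative integer $K$ while the $L$-part tends to~$1$, so the inhomogeneous term tends to $-q^2(q-1)(1-q^{\lambda})(\alpha_1-\alpha_3)x/(\alpha_1\alpha_2\alpha_3)$; the proposition is then just the limiting identity. Your added remarks on dominated convergence for the shifted sums make explicit a point the paper leaves tacit, but the route is the same.
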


To obtain results corresponding to the specialization $\xi= 1/\beta_1$, $\xi= 1/\beta_2$, $\xi= 1/\beta_3$ and $\xi= x $, we replace the functions with
\begin{align*}
& P_{\lambda}(x, s) = (x/s)^{\lambda} \frac{(x/s;q)_{\infty}}{(q^{-\lambda} x/s ;q)_{\infty}}, \qquad
y(x)=x^{\mu '} \frac{(q/(\beta_1 x), q/(\beta_2 x), q/(\beta_3 x) ;q)_{\infty}}{(q/(\alpha_1 x), q/(\alpha_2 x), q/(\alpha_3 x) ;q)_{\infty}}
\end{align*}
with the condition $ q^{\mu '} \alpha_1 \alpha_2 \alpha_3 /(\beta_1 \beta_2 \beta_3 ) =1$, i.e.,~$q^{\mu '} =q^{\lambda} $.
Then the function $y(x) $ also satisfies the $q$-difference equation $y(qx)=B(x)y(x) $, where $B(x)$ is given as equation~\eqref{eq:gqxBxgxqJPN3} with the condition $\mu =0$.
The function $\bar{g}_1^{\,[K,L]}(x) $ is written as
\begin{align*}
 \bar{g}_1^{\,[K,L]}(x) & \!=\! (1\!-\!q)\frac{\alpha_3\!-\!\alpha_1}{\alpha_1 \alpha_3} x^{\lambda}
 \! \!\sum_{n=K}^{L}\! (q^n\xi )^{-1} \frac{\big(x q^{-n} / \xi, q^{1-n} /( \beta_1 \xi ), q^{1-n} /( \beta_2 \xi ), q^{1-n} /( \beta_3 \xi ) ; q\big)_{\infty}}{\big(x q^{-n-\lambda} / \xi, q^{-n} / (\alpha_1 \xi),q^{1-n} / (\alpha_2 \xi), q^{-n} / (\alpha_3 \xi\big) ; q)_{\infty}},
\end{align*}
and it converges as $K \to -\infty $ and $L \to +\infty $.
Write the limit by $\bar{g}_1 (x) $.
Note that the function~$\bar{g}_1^{\,[K,L]}(x)$ satisfies equation~\eqref{eq:g1KL3} where
\begin{align*}
 Q^{[K,L]}(x)&=P_{\lambda}\big(x, q^{K-1} \xi\big) y\big(q^{K} \xi\big) - P_{\lambda}\big(x, q^{L} \xi\big) y\big(q^{L+1} \xi\big) \nonumber\\
& = (q x )^{\lambda} \biggl\{ \frac{\big(x q^{1-K}/\xi, q^{1-K}/(\beta_1 \xi ), q^{1-K}/(\beta_2 \xi ), q^{1-K}/(\beta_3 \xi ) ;q\big)_{\infty}}{\big(x q^{-\lambda +1-K} / \xi, q^{1-K}/(\alpha_1 \xi ), q^{1-K}/(\alpha_2 \xi ), q^{1-K}/(\alpha_3 \xi ) ;q\big)_{\infty}} \nonumber \\
& \qquad \qquad - \frac{\big(x q^{-L}/\xi, q^{-L}/(\beta_1 \xi ), q^{-L}/(\beta_2 \xi ), q^{-L}/(\beta_3 \xi ) ;q\big)_{\infty}}{\big(x q^{-\lambda -L} / \xi, q^{-L}/(\alpha_1 \xi ), q^{-L}/(\alpha_2 \xi ), q^{-L}/(\alpha_3 \xi \big) ;q)_{\infty}} \biggr\} .
\end{align*}
The non-homogeneous term in equation~\eqref{eq:g1KL3} is written as
\begin{align}
& q(q-1)\frac{\alpha_1-\alpha_3}{\alpha_1\alpha_3}x \biggl\{ \biggl( x-q\frac{\beta_1\beta_2\beta_3}{\alpha_1{\alpha_2}^2\alpha_3} \biggr) Q^{[K,L]}(x/q)-\biggl( x-\frac{q}{\alpha_2} \biggr) Q^{[K,L]}(x) \biggr\} \nonumber
\\
&\qquad = q(q-1)(1-q^{\lambda}) \frac{\alpha_1-\alpha_3}{\alpha_1\alpha_3} x^{\lambda +2} \nonumber
\\
&\qquad \hphantom{=} \times\biggl( \frac{\big(x q^{1-K}/\xi, q^{1-K}/(\beta_1 \xi ), q^{1-K}/(\beta_2 \xi ), q^{1-K}/(\beta_3 \xi ) ;q\big)_{\infty}}{\big(x q^{-\lambda -K} / \xi, q^{1-K}/(\alpha_1 \xi ), q^{2-K}/(\alpha_2 \xi ), q^{1-K}/(\alpha_3 \xi \big) ;q)_{\infty}} \nonumber
\\
&\qquad \hphantom{=\times\biggl(} - \frac{\big(x q^{-L}/\xi, q^{-L}/(\beta_1 \xi ), q^{-L}/(\beta_2 \xi ), q^{-L}/(\beta_3 \xi ) ;q\big)_{\infty}}{\big(x q^{-\lambda -1 -L} / \xi, q^{-L}/(\alpha_1 \xi ), q^{1-L}/(\alpha_2 \xi ), q^{-L}/(\alpha_3 \xi ) ;q\big)_{\infty}} \biggr).
\label{eq:nonhomKL32}
\end{align}
We investigate the limit of equation~\eqref{eq:nonhomKL32} as $K \to -\infty $ and $L \to +\infty $.
We have
\begin{align*}
& \lim_{K \to -\infty} \frac{\big(x q^{1-K}/\xi, q^{1-K}/(\beta_1 \xi ), q^{1-K}/(\beta_2 \xi ), q^{1-K}/(\beta_3 \xi ) ;q\big)_{\infty}}{\big(x q^{-\lambda -K} / \xi, q^{1-K}/(\alpha_1 \xi ), q^{2-K}/(\alpha_2 \xi ), q^{1-K}/(\alpha_3 \xi ) ;q\big)_{\infty}} =1 .
\end{align*}
If $\xi=1 /\beta_1$, $\xi=1/ \beta_2$, $\xi=1/ \beta_3$ or $\xi= x$, then
\begin{align*}
& \frac{\big(x q^{-L}/\xi, q^{-L}/(\beta_1 \xi ), q^{-L}/(\beta_2 \xi ), q^{-L}/(\beta_3 \xi ) ;q\big)_{\infty}}{\big(x q^{-\lambda -1 -L} / \xi, q^{-L}/(\alpha_1 \xi ), q^{1-L}/(\alpha_2 \xi ), q^{-L}/(\alpha_3 \xi ) ;q\big)_{\infty}} =0
\end{align*}
for any positive integer $L$, and equation~\eqref{eq:nonhomKL32} tends to
\begin{align*}
& q(q-1)\big(1-q^{\lambda}\big) \frac{\alpha_1-\alpha_3}{\alpha_1\alpha_3} x^{\lambda +2}
\end{align*}
as $K \to -\infty $ and $L \to +\infty $.

If $\xi= 1 /\beta_1$, then
\begin{align}
\bar{g}_1(x) & = (1-q) \frac{\alpha_3-\alpha_1}{\alpha_1 \alpha_3} \beta_1 x^{\lambda} \frac{(\beta_1 x, q \beta_1 / \beta_2, q \beta_1 / \beta_3, q ; q)_{\infty}}{\big(q^{-\lambda} \beta_1 x, \beta_1 / \alpha_1,q \beta_1 / \alpha_2, \beta_1 / \alpha_3 ; q\big)_{\infty}} \nonumber
\\
&\phantom{=} \times {}_4\phi_3 \biggl(\!\! \begin{array}{c} q^{-\lambda} \beta_1 x, \beta_1 / \alpha_1,q \beta_1 / \alpha_2, \beta_1 / \alpha_3 \\
 \beta_1 x, q \beta_1 / \beta_2, q \beta_1 / \beta_3 \end{array}\! ;q,q \biggr) .
\label{eq:xi1be1deg3}
\end{align}
The case $\xi= 1 /\beta_ j $, $j=2,3$, is obtained from the case $\xi=1 /\beta_1 $ by replacing $\beta_1$ with $\beta_j$.
If $\xi=x$, then
\begin{align}
 \bar{g}_1(x) & = (1-q)q \frac{\alpha_3-\alpha_1}{\alpha_1 \alpha_3} x^{\lambda -1} \frac{\big( q^{2} /( \beta_1 x ), q^{2} /( \beta_2 x ), q^{2} /( \beta_3 x ), q ; q\big)_{\infty}}{\big( q / (\alpha_1 x ),q^{2} / (\alpha_2 x ), q / (\alpha_3 x ), q^{1-\lambda} ; q\big)_{\infty}} \nonumber
 \\
&\phantom{=} \times {}_4 \phi_3 \biggl(\!\! \begin{array}{c} q / (\alpha_1 x ),q^{2} / (\alpha_2 x ), q / (\alpha_3 x ), q^{1-\lambda} \\
 q^{2} /( \beta_1 x ), q^{2} /( \beta_2 x ), q^{2} /( \beta_3 x ) \end{array}\! ;q,q \biggr) .
\label{eq:xi1be4deg3}
\end{align}

\begin{Proposition} \label{prop:deg3nonhom2}
The functions in equations~\eqref{eq:xi1be1deg3}, \eqref{eq:xi1be4deg3} satisfy
\begin{align}
& \biggl( x-q\frac{\beta_1\beta_2}{\alpha_1\alpha_2\alpha_3} \biggr) \biggl( x-q\frac{\beta_2\beta_3}{\alpha_1\alpha_2\alpha_3} \biggr) \biggl( x-q\frac{\beta_3\beta_1}{\alpha_1\alpha_2\alpha_3} \biggr) \bar{g}_1(x/q) \nonumber
\\
&\qquad +q \biggl( x-\frac{1}{\alpha_1} \biggr) \biggl( x-\frac{q}{\alpha_2} \biggr) \biggl( x-\frac{1}{\alpha_3} \biggr) \bar{g}_1(qx) \nonumber
\\
&\qquad +\biggl\{ -(1+q)x^3 + \biggl( \frac{q}{\alpha_1}+\frac{q^2}{\alpha_2}+\frac{q}{\alpha_3} +q\frac{\beta_1\beta_2+\beta_2\beta_3+\beta_3\beta_1}{\alpha_1\alpha_2\alpha_3} \biggr) x^2 \nonumber
\\
&\qquad\hphantom{+\biggl\{} -\biggl( q^2\frac{\beta_1+\beta_2+\beta_3}{\alpha_1\alpha_2\alpha_3} +q^2\frac{\beta_1\beta_2\beta_3}{\alpha_1{\alpha_2}^2{\alpha_3}^2} +q^2\frac{\beta_1\beta_2\beta_3}{{\alpha_1}^2{\alpha_2}^2 \alpha_3}+q\frac{\beta_1\beta_2\beta_3}{{\alpha_1}^2 \alpha_2 {\alpha_3}^2} \biggr) x \nonumber
\\
&\qquad\hphantom{+\biggl\{} +q^2(1+q)\frac{\beta_1\beta_2\beta_3}{{\alpha_1}^2{\alpha_2}^2{\alpha_3}^2} \biggr\} \bar{g}_1(x) +q(q-1)\big(1-q^{\lambda}\big) \frac{\alpha_1-\alpha_3}{\alpha_1\alpha_3} x^{\lambda +2} = 0.
\label{eq:g1nonhom2}
\end{align}
\end{Proposition}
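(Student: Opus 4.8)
The plan is to obtain \eqref{eq:g1nonhom2} as the limit of the finite-sum identity \eqref{eq:g1KL3} for the alternative kernel and integrand, in parallel with the derivation of Proposition~\ref{prop:g1nonhom} but with the special values of $\xi$ forcing the boundary contribution to come from the $L\to+\infty$ end. First I would record that, with $P_\lambda(x,s)=(x/s)^\lambda (x/s;q)_\infty/(q^{-\lambda}x/s;q)_\infty$ and $y(x)=x^{\mu'}(q/(\beta_1 x),q/(\beta_2 x),q/(\beta_3 x);q)_\infty/(q/(\alpha_1 x),q/(\alpha_2 x),q/(\alpha_3 x);q)_\infty$ with $q^{\mu'}=q^\lambda$, the function $y(x)$ still solves $y(qx)=B(x)y(x)$ for the $B(x)$ of \eqref{eq:gqxBxgxqJPN3} with $\mu=0$. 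Hence Proposition~\ref{prop:convKL} applies, and after passing to the $\bar g$-coordinates via $P^{-1}$ and eliminating $\bar g_2$ through Proposition~\ref{prop:g1g2single}, the function $\bar g_1^{\,[K,L]}(x)$ satisfies \eqref{eq:g1KL3} with $Q^{[K,L]}(x)$ given by the displayed formula preceding the statement.

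Second, I would take the two limits in \eqref{eq:g1KL3}. The series defining $\bar g_1^{\,[K,L]}(x)$ converges absolutely as $K\to-\infty$ and $L\to+\infty$ to $\bar g_1(x)$, by the ratio estimate already used in the bulk argument: under $q^\lambda=\beta_1\beta_2\beta_3/(\alpha_1\alpha_2\alpha_3)$ the general term behaves like a geometric series with ratio $q$ at both ends. For the non-homogeneous term \eqref{eq:nonhomKL32}, the $K\to-\infty$ limit of the first factor equals $1$, while for $\xi\in\{1/\beta_1,1/\beta_2,1/\beta_3,x\}$ the $L$-dependent factor contains a $q$-Pochhammer symbol that vanishes for every positive integer $L$, so that factor is identically zero. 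Thus \eqref{eq:nonhomKL32} tends to $q(q-1)(1-q^\lambda)(\alpha_1-\alpha_3)x^{\lambda+2}/(\alpha_1\alpha_3)$, and letting $K\to-\infty$, $L\to+\infty$ in \eqref{eq:g1KL3} yields exactly \eqref{eq:g1nonhom2}.

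Third, I would identify the limiting series with the stated ${}_4\phi_3$ forms. For $\xi=1/\beta_1$ the factor $(q^{1-n}/(\beta_1\xi);q)_\infty=(q^{1-n};q)_\infty$ vanishes whenever $n\ge 1$, so the bilateral sum truncates to $n\le 0$; reindexing $m=-n$ and simplifying the $q$-Pochhammer symbols as in the computation of \eqref{eq:hy0sum} produces the ${}_4\phi_3$ of \eqref{eq:xi1be1deg3}. The case $\xi=x$ is analogous, the truncation now arising from $(q^{-n};q)_\infty$, and gives \eqref{eq:xi1be4deg3}; the cases $\xi=1/\beta_2,1/\beta_3$ follow by the symmetry noted after \eqref{eq:xi1be1deg3}.

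The main obstacle is bookkeeping rather than conceptual: one must check that the same special values of $\xi$ simultaneously annihilate the boundary term in \eqref{eq:nonhomKL32}, so that the limiting equation carries the clean right-hand side $q(q-1)(1-q^\lambda)(\alpha_1-\alpha_3)x^{\lambda+2}/(\alpha_1\alpha_3)$, and truncate the bilateral series to a terminating unilateral one with the precise arguments appearing in \eqref{eq:xi1be1deg3} and \eqref{eq:xi1be4deg3}. The only analytic subtlety is the interchange of the limit with the infinite summation, which is justified by the absolute convergence established above.
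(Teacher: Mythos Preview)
Your proposal is correct and follows essentially the same route as the paper: use the alternative kernel and integrand so that $y$ still solves $E_{\mathbf B,\mathbf b}$, derive the finite-sum identity \eqref{eq:g1KL3} for $\bar g_1^{\,[K,L]}$ with the corresponding $Q^{[K,L]}$, and then pass to the limit using that the $K$-side of \eqref{eq:nonhomKL32} tends to $1$ while the $L$-side vanishes identically for $\xi\in\{1/\beta_1,1/\beta_2,1/\beta_3,x\}$, yielding the constant non-homogeneous term in \eqref{eq:g1nonhom2}. Your identification of the truncated bilateral sums with the ${}_4\phi_3$ expressions \eqref{eq:xi1be1deg3} and \eqref{eq:xi1be4deg3} is also exactly how the paper obtains those formulas.
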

It follows from equation~\eqref{eq:deg3facto} that, if $\bar{g}_1(x) $ satisfies equation~\eqref{eq:g1nonhom2}, then it also satisfies equation~\eqref{eq:g1q4} with the condition in equation~\eqref{eq:condmula}.

We replace the parameters by using Proposition~\ref{prop:paramreldeg3} to fit results on $q$-integrals with the variant of $q$-hypergeometric equation of degree three given in equation~\eqref{eq:varqhgdeg3}.
We now replace the parameters in Proposition~\ref{prop:g1nonhom} by using Proposition~\ref{prop:paramreldeg3}.
Set $g ^{\langle 1 \rangle}(x) = x^{-\alpha} \bar{g}_1(x) /\{(1-q)(\alpha_3 -\alpha_1) \}$.
Then equation~\eqref{eq:g1nonhom} is replaced by
\begin{align}
& \big(x-q^{h_1 +1/2} t_1\big) \big(x- q^{h_2 +1/2} t_2\big) \big(x- q^{h_3 +1/2} t_3\big) g ^{\langle 1 \rangle} (x/q) \nonumber
\\
&\qquad + q^{2\alpha +1} \big(x - q^{l_1-1/2}t_1 \big) \big(x - q^{l_2 -1/2} t_2\big) \big(x - q^{l_3 -1/2} t_3\big) g ^{\langle 1 \rangle} (qx) \nonumber
\\
&\qquad + q^{\alpha} \bigl[ - (q + 1 ) x^3 + q^{1/2} \big\{ \big(q^{h_1} + q^{l_1}\big)t_1 + \big(q^{h_2} + q^{l_2}\big)t_2 + \big(q^{h_3} + q^{l_3}\big)t_3\big \} x^2 \nonumber
\\
& \qquad\hphantom{+ q^{\alpha} \bigl[} - q^{(h_1+h_2+h_3+l_1+l_2+l_3 +1)/2} \big\{ \big(q^{- h_1}+q^{-l_1}\big)t_2 t_3 + \big(q^{- h_2}+ q^{- l_2}\big) t_1 t_3 \nonumber
\\
& \qquad\hphantom{+ q^{\alpha} \bigl[} + \big(q^{- h_3}+ q^{- l_3}\big) t_1 t_2 \big\} x + q^{(h_1 +h_2 + h_3 + l_1 + l_2 + l_3 )/2} ( q + 1 ) t_1 t_2t_3 \bigr] g ^{\langle 1 \rangle} (x) \nonumber
\\
&\qquad +q^{\alpha +l_1 +l_2 +l_3 -1/2} t_1 t_2 t_3 \big(1-q^{\lambda}\big) x^{1-\alpha} \nonumber
\\
& \qquad\hphantom{+}\times\! \biggl( \frac{\vartheta_q\big(q^{\lambda+1}\xi/x\big) \vartheta_q\big(\xi q^{- l_1 + 1/2}/t_1\big) \vartheta_q\big( \xi q^{-l_2 +1/2} /t_2 \big) \vartheta_q\big(\xi q^{- l_3 + 1/2}/t_3 \big)}{\vartheta_q(\xi/x) \vartheta_q\big(\xi q^{\lambda -h_1 +1/2} /t_1 \big) \vartheta_q\big(\xi q^{\lambda -h_2 +1/2} /t_2 \big) \vartheta_q\big(\xi q^{\lambda -h_3 +1/2} /t_3 \big)} -1\! \biggr)=0,\!\!\!\!
\label{eq:varqhgdeg3nonhom0}
\end{align}
where $\lambda = (h_1+h_2+h_3-l_1-l_2-l_3 +1 )/2 $.
It is a non-homogeneous version of the variant of $q$-hypergeometric equation of degree three.
Equation~\eqref{eq:bg1xi} is replaced with
\begin{align*}
& g ^{\langle 1 \rangle} (x)= x^{-\alpha} \sum_{n=-\infty}^{+\infty} \!\! q^n\xi \frac{\big(\xi q^{\lambda+n+1} /x, \xi q^{n - l_1 + 3/2} /t_1, \xi q^{n -l_2 +3/2} /t_2, \xi q^{n - l_3 + 3/2}/t_3 ; q\big)_{\infty}}{\big(\xi q^{n+1} /x, \xi q^{n+ \lambda -h_1 +1/2} /t_1, \xi q^{n+ \lambda -h_2 +1/2} /t_2, \xi q^{n+ \lambda -h_3 +1/2} /t_3 ;q\big)_{\infty}}, 
\end{align*}
where $\lambda = (h_1+h_2+h_3-l_1-l_2-l_3 +1 )/2 $.
Then it follows from Proposition~\ref{prop:g1nonhom} that the function $g ^{\langle 1 \rangle} (x)$ in the above equation is a solution to equation~\eqref{eq:varqhgdeg3nonhom0}.
By rewriting Proposition~\ref{prop:deg3nonhom1}, we obtain Theorem~\ref{thm:deg3-1}.

To show Theorem~\ref{thm:deg3-2}, we replace the parameters in Proposition~\ref{prop:deg3nonhom2} by using Proposition~\ref{prop:paramreldeg3}.
Set $g^{\langle 2 \rangle} (x) = x^{-\alpha} \bar{g}_1(x) \alpha_1 \alpha_3 /\{q(1-q)(\alpha_3 -\alpha_1) \}$.
Then we obtain Theorem~\ref{thm:deg3-2} by rewriting equations~\eqref{eq:xi1be1deg3}, \eqref{eq:xi1be4deg3}.

\subsection*{Acknowledgements}
The authors are grateful to the referees for the valuable comments.
The second author was supported by JSPS KAKENHI Grant Number JP22K03368.

\pdfbookmark[1]{References}{ref}
\LastPageEnding

\end{document}